\definecolor{darkred}{RGB}{150,0,0}
\definecolor{darkblue}{RGB}{0,0,150}
\definecolor{myblue1}{RGB}{35,119,189}
\newtheorem{thmA}{Theorem}
\crefname{thmA}{Theorem}{Theorems}
\newtheorem{corA}[thmA]{Corollary}
\newtheorem{conjA}[thmA]{Conjecture}
\newtheorem{thm}{Theorem}[section]
\crefname{thm}{Theorem}{Theorems}
\newtheorem{cor}[thm]{Corollary}
\newtheorem{lem}[thm]{Lemma}
\newtheorem{prop}[thm]{Proposition}
\theoremstyle{definition}
\newtheorem{defn}[thm]{Definition}
\crefname{defn}{Definition}{Definitions}
\crefname{figure}{Figure}{Figures}
\newtheorem{conv}[thm]{Convention}
\newtheorem*{cla*}{Claim}
\newtheorem*{Ack}{Acknowledgements}
\theoremstyle{remark}
\newtheorem{rem}[thm]{Remark}
\newtheorem{ex}[thm]{Example}
\numberwithin{equation}{section}
\newcommand{\T}{\mathbb{T}}
\newcommand{\Z}{\mathbb{Z}}
\newcommand{\F}{\mathbb{F}}
\newcommand{\Sp}{\mathbb{S}}
\newcommand{\N}{\mathbb{N}}
\newcommand{\R}{\mathbb{R}}
\newcommand{\C}{\mathbb{C}}
\newcommand{\QQ}{\mathbb{Q}}
\newcommand{\HHH}{\mathbb{H}}
\newcommand{\eps}{\varepsilon}
\newcommand{\supp}{\textnormal{Supp}}
\newcommand{\id}{\textnormal{id}}
\DeclareMathOperator{\Aff}{Aff}
\DeclareMathOperator{\cay}{Cay}
\DeclareMathOperator{\GL}{GL}
\DeclareMathOperator{\SL}{SL}
\DeclareMathOperator{\PSL}{PSL}
\DeclareMathOperator{\SO}{SO}
\DeclareMathOperator{\diam}{diam}
\newcommand*{\defeq}{\mathrel{\vcenter{\baselineskip0.5ex \lineskiplimit0pt
                     \hbox{\scriptsize.}\hbox{\scriptsize.}}}%
                     =}
\def\cO{{\mathcal O}}
\def\acts{\curvearrowright}
\newcounter{dawidcomments}
\newcounter{damiancomments}
\newcounter{goulnaracomments}
\definecolor{darkgreen}{rgb}{0.0, 0.5, 0.0}
\newcounter{timcomments}
\begin{document}
	\title{Spectral gap and origami expanders}

\author{Goulnara Arzhantseva}
\address{\hspace{-3ex} Universit\"at Wien, Fakult\"at f\"ur Mathematik\\
	Oskar-Morgenstern-Platz 1, 1090 Wien, Austria}
\email{goulnara.arzhantseva@univie.ac.at}

\author{Dawid Kielak}
\address{\hspace{-3ex} Mathematical Institute, University of Oxford, Andrew Wiles Building, Radcliffe Observatory Quarter, Woodstock Road, Oxford
	OX2 6GG, United Kingdom}
\email{kielak@maths.ox.ac.uk}

\author{Tim de Laat}
\address{\hspace{-3ex} University of M\"unster, Mathematical Institute, Einsteinstra\ss{}e 62, 48149 M\"unster, Germany}
\email{tim.delaat@uni-muenster.de}

\author{Damian Sawicki}
\address{\hspace{-3ex} KU Leuven, Department of Mathematics,
	Celestijnenlaan 200b – box 2400, 3001 Leuven, Belgium}
\urladdr{https://sites.google.com/view/damiansawicki}

\date{}

\subjclass[2020]{37E30 (05C48, 20F65, 37A30, 51F30, 57M60)}
\keywords{Spectral gap, measure-expanding action, origami surface, expander, coarse embedding, quasi-isometry}
\thanks{G.A.\ was partially supported by the ERC grant ANALYTIC no.\ 259527. D.K.\ was supported by the ERC grant FIBRING no.\ 850930. T.dL.\ was supported by the DFG -- Project-ID 427320536 -- SFB 1442, and under Germany's Excellence Strategy EXC 2044 390685587, Mathematics Münster: Dynamics--Geometry--Structure. D.S.\ was supported by the FWO research project G090420N of the Research Foundation Flanders.}

\baselineskip=16pt

\begin{abstract}
	We construct the first measure-preserving affine actions with spectral gap on surfaces of arbitrary genus $g > 1$. We achieve this by finding geometric representatives of  multi-twists on origami surfaces. As a major application, we construct
	new expanders that are coarsely distinct from the classical expanders obtained via the Laplacian as Cayley graphs of finite quotients of a group. Our methods also show
	that the Margulis expander, and hence the Gabber--Galil expander, is coarsely distinct from the Selberg expander.
\end{abstract}

\maketitle

\section{Introduction} \label{sec:introduction}

Spectral gap is a fundamental property  at the heart of many rigidity results in various areas of modern mathematics and computer science.
In the setting of measure-preserving group actions, it plays a significant role  in the solution of the Banach--Ruziewicz problem \cites{Margulis,sullivan,drinfeld}, in the study of random transformations \cites{furman-shalom,conze-guivarch}, and in orbit equivalence rigidity results \cites{ioana1,ioana2}. Moreover, it underlies various constructions of expanders, which will be explained in detail below.

Let $G$ be a finitely generated group with a finite symmetric generating set~$S$ containing the identity element $1_G$ of $G$. Let $(X,\mu)$ be a probability space, $\rho \colon G \acts (X,\mu)$ a measure-preserving action, and $\pi_{\rho} \colon G \to \mathcal{U}(L^2(X,\mu))$ the associated Koopman representation, which is the unitary representation given by $(\pi_{\rho}(g)f)(x) = f(g^{-1}x)$. The space of constant functions on $X$ is an invariant subspace of $\pi_{\rho}$. Its orthogonal complement is the subspace $L^2_0(X,\mu)$ of $L^2(X,\mu)$ consisting of the elements of $L^2(X,\mu)$ with mean zero. For convenience, we will denote $\pi_{\rho}(g)f$ by $g.f$.

\begin{defn}[Spectral gap] \label{defn:spectralgap}
A probability-measure-preserving action $\rho \colon G \acts (X,\mu)$ has \emph{spectral gap} if
there exists a constant $\kappa > 0$ such that for every $f\in L^2_0(X,\mu)$,
\[
 	\sum_{s\in S}\| s.f - f \|_2\geqslant \kappa \| f \|_2.
\]
\end{defn}

In this article, we construct the very first continuous actions with spectral gap of free groups on surfaces of arbitrary genus $g > 1$. Concretely, the surfaces we consider are origami surfaces, also called square-tiled translation surfaces. These surfaces are obtained by gluing together finitely many copies of the unit square along their edges in such a way that the resulting surface is a branched covering of the torus $\mathbb{T}^2$; see \cref{defn:origami}.
Given an arbitrary origami surface $\Sigma$ of genus $g \geqslant 1 $, we explicitly construct a measure-preserving action $\mathbb{F}_2 \curvearrowright \Sigma$ that is \emph{expanding in measure}. The latter notion is a convenient characterisation of spectral gap for measure-preserving actions; see \cref{defn:measexp} and \cref{prop:meSG}. This concept was studied systematically in~\cite{FV}.

\begin{thmA}\label{thm:F2}
	Let $\Sigma$ be an arbitrary origami surface. Then the free group $\F_2$ admits a measure-preserving action by Lipschitz homeomorphisms on $\Sigma$ that is expanding in measure. Equivalently, this action $\F_2 \curvearrowright \Sigma$ has spectral gap.
\end{thmA}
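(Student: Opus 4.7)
My plan is to realise the $\F_2$-action on $\Sigma$ as a sub-action of the Veech group of $\Sigma$. By a theorem of Gutkin--Judge, refined in the origami setting by Schmith\"usen, the Veech group $V(\Sigma)$ of an origami surface is a finite-index subgroup of $\SL_2(\Z)$. Since $\SL_2(\Z)$ is virtually free of rank $2$ (e.g.\ Sanov's subgroup is finite-index in $\SL_2(\Z)$), the intersection of $V(\Sigma)$ with such a finite-index free subgroup gives a finite-index copy of $\F_2$ inside $V(\Sigma)$. Concretely, for an integer $N\geqslant 2$ that is a common multiple of the combinatorial orders of the horizontal and vertical monodromies of the origami, the shears
\[
\alpha = \begin{pmatrix} 1 & N \\ 0 & 1 \end{pmatrix}, \qquad \beta = \begin{pmatrix} 1 & 0 \\ N & 1 \end{pmatrix}
\]
both preserve the origami structure and, by Sanov's theorem, generate a free subgroup of rank~$2$ in $\SL_2(\Z)$. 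They act on $\Sigma$ by affine (hence Lipschitz) measure-preserving homeomorphisms, providing the sought-after $\F_2$-action.

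Given the stated equivalence between being expanding in measure and having spectral gap, it suffices to verify the latter for the $\F_2$-action on $L^2_0(\Sigma)$. The branched covering $p\colon \Sigma\to\T^2$ is $\F_2$-equivariant, so the pullback $p^\ast$ is an $\F_2$-equivariant isometric embedding of $L^2(\T^2)$ into $L^2(\Sigma)$ (up to normalisation by $\sqrt{d}$, where $d$ is the number of squares in $\Sigma$). This yields an orthogonal, $\F_2$-invariant decomposition
\[
L^2_0(\Sigma) \;=\; p^\ast L^2_0(\T^2) \;\oplus\; W,
\]
where $W$ consists of functions in $L^2(\Sigma)$ whose average over each fibre of $p$ vanishes. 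On the first summand, spectral gap is inherited from the classical spectral gap of $\SL_2(\Z)\curvearrowright\T^2$ (Rosenblatt), together with the standard fact that spectral gap on a probability space passes to finite-index subgroups.

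The main obstacle is therefore spectral gap on the second summand $W$. Writing $B\subset\T^2$ for the (finite) image of the branch locus, functions in $W$ may be identified with sections of a flat rank-$(d-1)$ vector bundle over $\T^2\setminus B$, whose monodromy is the pullback, via the origami's permutation homomorphism $\pi_1(\T^2\setminus B)\to S_d$, of the standard $(d-1)$-dimensional representation of the symmetric group. To control $W$, my plan is to run a twisted Fourier analysis on $\T^2$---equivalently, to analyse the averaging operator on the $d$ sheets of the cover decomposed into $S_d$-isotypic components---combining the $\F_2$-spectral gap for the twisted Fourier modes with the mixing properties of the affine $\F_2$-action. The bulk of the technical work is concentrated here, since the classical Fourier-analytic proofs on $\T^2$ do not directly transfer to the twisted setting, and one must argue that the $\F_2$-orbit of any twisted Fourier mode of bounded frequency escapes to arbitrarily high frequency, just as in the untwisted case.
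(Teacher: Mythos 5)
Your construction of the $\F_2$-action coincides with the paper's (the matrices $a_k,b_k$ of \eqref{eq:generators} with $k$ a common multiple of the orders of $\sigma$ and $\tau$, acting via the arrangement maps), so that part is fine. The genuine gap is in the spectral gap argument: you reduce to proving spectral gap on the ``purely branched'' summand $W$, but you do not actually prove it --- you only sketch a programme (twisted Fourier analysis on $\T^2\smallsetminus B$ with a flat monodromy bundle, escape of twisted modes to high frequency) and you yourself flag that ``the bulk of the technical work is concentrated here.'' As written, this is a plan, not a proof, and the plan is far from routine: the $\F_2$-action mixes the sheets according to the monodromy permutations, so it does not preserve any natural grading of $W$ by ``twisted frequency,'' and the analogue of the Gabber--Galil expansion on $\Z^2\smallsetminus\{0\}$ (Lemma \ref{lem:Z2expanding}) is precisely the step that would have to be redone from scratch in the twisted setting.

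The paper sidesteps the decomposition $L^2_0(\Sigma)=p^*L^2_0(\T^2)\oplus W$ entirely. Instead of analysing $W$, it proves a general lifting lemma (\cref{schmidt trick}): if $\rho\colon X\to Y$ is a $G$-equivariant factor map with uniformly bounded fibres in the measured sense of \eqref{measures-estimates}, and $G\acts Y$ is measure expanding, then $G\acts X$ is measure expanding \emph{provided} $G\acts X$ is ergodic. (The argument: ergodicity plus expansion downstairs forces strong ergodicity upstairs; then a sequence of almost-invariant sets with $\mu(U_n)\to 0$ would push down to violate expansion on $Y$.) With this lemma the entire burden reduces to proving ergodicity of $\F_2\acts\Sigma$, which the paper handles by an elementary Fubini-plus-irrational-rotation argument along horizontal and vertical loops. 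So the soft ergodicity check replaces the hard harmonic analysis on $W$ that your route would require, and this is the missing idea in your proposal. If you wish to keep the decomposition viewpoint, you would still need to supply an actual spectral estimate on $W$, and the most economical way to obtain one is exactly to route it through strong ergodicity rather than Fourier analysis.
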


In fact, we prove more. For every $\Sigma$ as in \cref{thm:F2} and every $m\in \N_{>0}$, one can restrict the action $\F_2 \acts \Sigma$ to a certain infinite-index subgroup $H_m < \F_2$ while retaining the spectral gap. In detail, this can be achieved by replacing the matrices $a_k, b_k$ in the construction of the action in \cref{subsec:actionsonsurfaces} with $a_{mk},b_{mk}$. Furthermore, if~$m$ divides $m'$ with $m\neq m'$, then $H_{m'}$ is an infinite-index subgroup of $H_m$. Each $H_m$ is itself isomorphic to $\F_2$, so for every fixed origami surface $\Sigma$, we obtain infinitely many
actions with spectral gap of $\F_2$ on $\Sigma$.

\Cref{thm:F2} gives the first examples of continuous measure-preserving actions with spectral gap on surfaces of higher genus $g$, i.e.,~genus $g > 1$. The only prior examples of actions with spectral gap on surfaces were on the sphere $\Sp^2$~\cites{drinfeld,gjs,BGSU2} and on the torus $\T^2$~\cites{rosenblatt,GG,LR}.

In order to prove \cref{thm:F2}, we first establish a general result that allows us to lift measure expansion to extensions of dynamical systems; see \cref{schmidt trick}.

\smallskip

Given an arbitrary translation surface $\Sigma$, let $\Aff^+(\Sigma)$ denote the group of all affine orientation-preserving homeomorphisms (see \cite{Edwardsetal2022} for the definitions and a modern treatment of these topics). The derivative of any $u\in \Aff^+(\Sigma)$ is constant, so there is a well-defined homomorphism $\Aff^+(\Sigma) \to \SL_2(\R)$,
whose image $\SL(\Sigma)$ is called the Veech group of $\Sigma$. The Veech group is a discrete and non-cocompact subgroup of $\SL_2(\R)$, and if the genus of $\Sigma$ is larger than $1$, the kernel $N(\Sigma)$ of $\Aff^+(\Sigma) \to \SL_2(\R)$ is finite \cite{Veech}*{Section~2}. If the Veech group is finitely generated, as a non-cocompact Fuchsian group, it must be virtually free (this class includes virtually $\mathbb Z$ and finite groups).

If $\Sigma$ is, moreover, an origami surface, the Veech group is a lattice \cite{GutkinJudge}*{Theorem~5.5}. Hence, $\Aff^+(\Sigma)$ is finitely generated whenever $\Sigma$ is an origami surface of genus larger than $1$. (Clearly, $\Aff^+(\T^2)$ contains~$\T^2$ as a subgroup, so finite generation is not true in the genus~$1$ case.)

\cref{thm:F2} asserts that a certain subgroup of $\Aff^+(\Sigma)$ acts with spectral gap. Hence, the defining action of $\Aff^+(\Sigma)$ on any origami surface $\Sigma$ of genus $> 1$ has spectral gap. Indeed, if a finitely generated subgroup of a finitely generated group acts with spectral gap, then the action of the overgroup has spectral gap as well.

\begin{corA}
For every origami surface~$\Sigma$ of genus $> 1$, the defining action of $\Aff^+(\Sigma)$ on $\Sigma$ has spectral gap.
\end{corA}

Our results suggest many directions for further study. One such line of research is about how special origami surfaces and the actions on them that we construct in the proof of \cref{thm:F2} are. In fact, we expect spectral gap of affine actions on arbitrary translation surfaces.

\begin{conjA}
	Let $\Sigma$ be a translation surface such that $\Aff^+(\Sigma)$ is finitely generated and not virtually cyclic. Then
	the action of $\Aff^+(\Sigma)$ on $\Sigma$ has spectral gap.
\end{conjA}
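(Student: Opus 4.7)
The plan is to promote the strategy behind \cref{thm:F2} from origami surfaces to arbitrary translation surfaces by finding two affine elements whose derivatives generate a non-elementary subgroup of $\SL_2(\R)$, and then lifting measure expansion from a convenient linear model back to $\Sigma$ via \cref{schmidt trick}.

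First I would unpack the hypotheses using the derivative sequence $1 \to N(\Sigma) \to \Aff^+(\Sigma) \to \SL(\Sigma) \to 1$. Since $\Aff^+(\T^2) \supseteq \T^2$ is not finitely generated, the finite-generation assumption forces $\Sigma$ to have genus at least $2$, so Veech's theorem cited in the excerpt makes $N(\Sigma)$ finite. The Veech group $\SL(\Sigma)$ is then itself finitely generated and not virtually cyclic; being discrete and non-cocompact in $\SL_2(\R)$, it is a non-elementary Fuchsian group and in particular contains a rank-two free subgroup $\langle \alpha, \beta \rangle \cong \F_2$ with both generators hyperbolic. Since spectral gap is inherited from subgroups, it suffices to lift $\alpha, \beta$ to affine elements $u, v \in \Aff^+(\Sigma)$ so that $\langle u, v \rangle \acts \Sigma$ has spectral gap.

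The natural next step is to realise $\Sigma$ as a measure-preserving extension of a model on which the $\langle \alpha, \beta \rangle$-action via derivatives is already known to have spectral gap, and then to apply \cref{schmidt trick}. For origami surfaces that model is the torus $\T^2$ and the extension is the canonical branched cover, so some substitute base has to be constructed in general. Two plausible candidates are: (i) a finite cover of $\Sigma$ whose holonomy lattice is commensurable with $\Z^2$, bringing us back to an origami-like situation; or (ii) a homogeneous quotient arising from the $\SL_2(\R)$-dynamics on the ambient stratum of moduli space, where one could hope to import spectral gap from Eskin--Mirzakhani--Mohammadi-type results on $\SL_2(\R)$-invariant measures.

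The main obstacle I foresee is precisely the construction of this base. For a generic translation surface the holonomy is not commensurable with any lattice in $\R^2$, so no torus base is available and the Schmidt lifting has nothing to lift from. An alternative, more direct route is to establish spectral gap for the derivative subgroup via Bourgain--Gamburd-type arguments in $\SL_2(\R)$ (which apply once the entries are algebraic, as they are for all translation surfaces with interesting Veech groups), and then to transfer spectral gap from the linear level to $L^2_0(\Sigma)$ using the finiteness of $N(\Sigma)$ to control the translational kernel. The coupling between the purely linear spectral gap and the genuine affine action on the surface is the delicate point, and it has no analogue in the origami case, where the torus base absorbs all of the translational information.
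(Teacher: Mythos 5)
This statement is \cref{conjA}, i.e.\ a \emph{conjecture}, not a theorem: the paper offers no proof of it, so there is nothing on the paper's side to compare your attempt against. Your text correctly senses this --- it is not a proof but a strategy discussion that ends by naming the obstruction (``the coupling between the purely linear spectral gap and the genuine affine action on the surface is the delicate point''). That is exactly why the statement is left open. A few remarks on the substance: the reduction you make in the first half (finite generation forces genus $\geqslant 2$ via the non--finite-generation of $\Aff^+(\T^2)$, hence $N(\Sigma)$ is finite by Veech, hence the Veech group is a finitely generated non-elementary Fuchsian group containing an $\F_2$) is sound and matches the discussion surrounding the conjecture in the introduction. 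However, your route (ii) via $\SL_2(\R)$-dynamics on strata would at best give equidistribution/spectral gap statements for the Teichm\"uller geodesic or horocycle flow on moduli space, which is a different dynamical system from the action of $\Aff^+(\Sigma)$ on the fixed surface $\Sigma$; there is no known transfer. Your route via ``Bourgain--Gamburd in $\SL_2(\R)$'' conflates spectral gap for random walks on the group with spectral gap for the Koopman representation of the affine action on $L^2_0(\Sigma)$; without a torus (or, more generally, homogeneous) target for \cref{schmidt trick}, the lifting mechanism that drives \cref{thm:F2} simply has no base to lift from, as you yourself observe. In short: your proposal should be read as an (accurate) analysis of why the conjecture is plausible and where it is stuck, not as a proof.
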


We exclude the virtually cyclic case, since such groups cannot act with spectral gap on a non-atomic measure space. It is in fact an open problem whether there is a translation surface whose Veech group is cyclic and generated by a hyperbolic element~\cite{HMSZ}*{Problem 6}.

\smallskip

As mentioned before, actions with spectral gap can be used to construct expanders. Expanders are sequences of finite, highly connected, sparse graphs with an increasing number of vertices. They play a prominent role in various areas of mathematics and computer science. We refer to \cites{HLW, LubotzkyBook, LubotzkySurvey, TaoBook, KowalskiBook} for thorough surveys on expanders and their wide range of applications.

Let $\Gamma=(V,E)$ be a finite, connected, simple (i.e.,~without multiple edges or loops) graph. Given a subset $A \subseteq V$, let $\partial A$ denote the edge boundary of $A$, i.e.,~the subset of $E$ consisting of edges joining a vertex in $A$ and a vertex in $V \smallsetminus A$. The Cheeger constant $h(\Gamma)$, which is a measure of the connectivity of the graph $\Gamma$, is defined by
\[
h(\Gamma)\coloneqq \min \left\{ \frac{|\partial A|}{|A|} \;\bigg\vert\; A \subseteq V,\; 1 \leqslant |A| \leqslant \frac{1}{2}|V| \right\}.
\]
The degree $\mathrm{deg}(v)$ of a vertex $v \in V$ is the number of edges $e \in E$ that contain $v$.
\begin{defn}[Expander] \label{defn:expander}
	An \emph{expander} is a sequence $\left(\Gamma_n\right)_{n}$ of finite  graphs $\Gamma_n=(V_n,E_n)$ with $\lim_{n \to \infty} |V_n| = \infty$ such that there exist $\varepsilon > 0$ and $D \geqslant 1$ such that for all $n \in \mathbb{N}$, we have $h(\Gamma_n) \geqslant \varepsilon$ and $\max_{v \in V_n} \mathrm{deg}(v) \leqslant D$.
\end{defn}

Initially, the existence of expanders was shown by probabilistic methods~\cites{kolmogorovbarzdin, pinsker}, but over the years, various explicit constructions have been described.

The first explicit construction of expanders is due to Margulis \cite{Mar73}. His expander is a sequence $(M_n)_{n}$ of bipartite graphs, where the vertex set of $M_n$ is given by $(\mathbb{Z}_n \times \mathbb{Z}_n) \sqcup (\mathbb{Z}_n \times \mathbb{Z}_n)$ (with the obvious bipartition and with $\Z_n\coloneqq \Z/n\Z$) and in which the degree of every vertex is $5$. Margulis's proof of expansion relies on methods from representation theory.
Based on this, other, in general non-bipartite, examples of expanders were obtained from finite quotients of groups with Kazhdan's Property~(T)  \cite{AlonMilman1985}.  A modification of Margulis's construction, which gives a sequence
quasi-isometric to Margulis's expander, was described in \cite{GG}, where expansion was established by means of Fourier theory on the continuous torus $\mathbb{T}^2=\mathbb{R}^2 / \mathbb{Z}^2$. In turn, this construction was analysed further in \cite{JM}, which relies on Fourier theory on the
discrete torus $\mathbb{Z}_n \times \mathbb{Z}_n$.

In \cite{FV}, Vigolo described systematically how to obtain expanders from actions with spectral gap (under mild regularity assumptions) by means of graphs approximating the action.
These approximating graphs are quasi-isometric to the level sets of a geometric object: the warped cone over the action; see \cref{defn:warpedcone}. The idea of measure expansion was already considered before Vigolo's work, e.g.\ in \cites{BY,GMP,BIG}, and as mentioned above, already the original Margulis expander and several other related classical expanders come from suitable measure-preserving transformations with spectral gap.
Since every sufficiently regular action with spectral gap gives rise to an expander, \cref{thm:F2} provides a wide family of expanders.

\begin{corA}\label{cor:origamiE}
	For every origami surface~$\Sigma$ and every action $\F_2\acts \Sigma$ as in \cref{thm:F2}, there is an associated expander ${(\Gamma_n)}_n$
	quasi-isometric to the warped cone~$\cO_{\F_2}\Sigma$ over the action $\F_2\acts \Sigma$.
\end{corA}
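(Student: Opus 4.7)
The plan is to apply the general construction of expanders from measure-expanding actions established in \cite{FV}. Theorem~A supplies precisely the dynamical input this construction requires: $\F_2$ acts on the compact origami surface $\Sigma$ by Lipschitz homeomorphisms preserving the natural Lebesgue probability measure inherited from the unit-square tiles, and the action is expanding in measure.

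First I would verify that $\F_2 \acts \Sigma$ satisfies the mild regularity hypotheses of \cite{FV}. The flat translation-surface structure equips $\Sigma$ with an Ahlfors $2$-regular metric measure space structure, so $\Sigma$ is compact and uniformly doubling; the generators of $\F_2$ act by uniformly Lipschitz maps by construction in Theorem~A, and the measure is preserved. Combined with expansion in measure, the standing hypotheses of \cite{FV} are met.

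Next I would describe the approximating graphs. Following \cite{FV}, for a suitable sequence $\epsilon_n \to 0$ one picks maximal $\epsilon_n$-separated subsets $V_n \subset \Sigma$ and defines $\Gamma_n$ by joining $x,y \in V_n$ whenever $d_\Sigma(s \cdot x, y) \leqslant C\epsilon_n$ for some generator $s$ of $\F_2$, with $C$ depending only on the Lipschitz bound and the doubling constant. The bounded-degree condition is then immediate from the doubling property. The uniform Cheeger bound $h(\Gamma_n) \geqslant \varepsilon$ — the heart of the construction — follows by translating measure expansion of the continuous action, via the comparison between the counting measure on $V_n$ and the Lebesgue measure on $\Sigma$, into a discrete isoperimetric inequality on each $\Gamma_n$. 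The quasi-isometry of $\Gamma_n$ with the level set of $\cO_{\F_2}\Sigma$ at scale of order $1/\epsilon_n$ is delivered by the same construction.

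I do not anticipate a genuine technical obstacle here: once Theorem~A is in hand, Corollary~A amounts to confirming that origami surfaces fit the regularity framework of \cite{FV}, which is transparent from the flat Euclidean structure on each square tile. The only minor subtlety to address is the finite branch locus of the covering $\Sigma \to \T^2$, but it has measure zero and $\Sigma$ remains a compact geodesic space across it, so neither the doubling nor the Lipschitz estimates are affected.
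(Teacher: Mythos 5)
Your proposal follows the same route as the paper: once Theorem~A establishes the measure-expanding action of $\F_2$ on $\Sigma$, Corollary~A is obtained by applying the equivalence between spectral gap and expansion of the $\cO$-graphs associated with the warped cone, which is the content of the paper's Theorem~\ref{VigolosTheorem} (a mild extension of \cite{FV}*{Theorem B}). There is, however, one point you pass over too quickly: origami surfaces of genus $\geqslant 2$ carry cone singularities, so they are not smooth Riemannian manifolds, and \cite{FV}*{Corollary 7.4} does not apply verbatim; the paper handles this by invoking the more general \cite{super-exp}*{Theorem 1.1}, which is stated for compact geodesic spaces equipped with Ahlfors-regular measures and Lipschitz actions, precisely so that the branched-cover geometry is covered. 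Your observation that the singular set has measure zero and $\Sigma$ stays compact geodesic is the right intuition for why the argument survives, but it does not by itself license applying a theorem whose stated hypotheses (a Riemannian manifold) are not met.
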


\smallskip

In the second part of the article, we analyse these expanders, which are examples of
\emph{origami expanders} as introduced in \cref{def:origamiexpander}. In particular, we study them from the point of view of coarse geometry. We investigate their coarse equivalence classes and we prove that our construction indeed yields a great diversity of new expanders.

\begin{thmA}\label{differentFromSelberg}
	Let $\Sigma$ be an arbitrary origami surface and ${(\Gamma_n)}_n$ an expander as in \cref{cor:origamiE}. Then ${(\Gamma_n)}_n$ does not admit a
	coarse embedding into any Selberg-type expander.
\end{thmA}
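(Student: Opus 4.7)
The strategy is to argue by contradiction, leveraging the contrast between the locally Euclidean geometry underlying the origami construction and the locally hyperbolic geometry underlying Selberg-type expanders, with the coarse rigidity result announced in the abstract serving as the decisive obstruction.

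First I would identify the relevant local model on each side. By \cref{cor:origamiE}, $(\Gamma_n)_n$ is quasi-isometric to the level sets of the warped cone $\cO_{\F_2}\Sigma$. Since $\Sigma$ is a translation surface built from finitely many unit squares, it is locally isometric to $\R^2$ away from its finitely many conical singularities; consequently, on the $n$-th level of the cone, pieces of intermediate size look like balls in $\R^2$. A Selberg-type expander, by contrast, arises from $\SL_2(\Z)$ acting on a finite-volume homogeneous space of $\SL_2(\R)$, so the analogous pieces on its levels look like balls in $\HHH^2$. The notion of \emph{piecewise asymptotic dimension} is tailor-made to turn this ``local model'' heuristic into a genuinely coarse-invariant statement, by selecting in a coarse way a preferred cover by pieces whose asymptotic geometry is controlled by $\R^2$ on one side and $\HHH^2$ on the other.

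Second, suppose a coarse embedding $(\Gamma_n)_n \hookrightarrow (\Gamma'_n)_n$ into a Selberg-type expander exists. Feeding this into the piecewise-asdim framework, I would show that the embedding, after rescaling and passing to an appropriate (ultra)limit of pieces at matching scales, yields a coarse embedding of the model space $\R^2$ into the model space $\HHH^2$. The coarse rigidity result then forces this embedding to be a quasi-isometry, which is impossible because $\R^2$ has polynomial volume growth while $\HHH^2$ has exponential volume growth, so the two are not quasi-isometric. This contradiction proves the theorem.

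The main obstacle is the second step: extracting from a uniform coarse embedding of two \emph{sequences} of finite graphs a single coarse embedding between the model spaces. Making this rigorous requires showing that the piecewise-asdim apparatus interacts well with coarse embeddings, so that pieces in $\Gamma_n$ are sent, up to bounded error, into pieces in $\Gamma'_{n'}$ of comparable scale, and that the conical singularities of $\Sigma$ do not prevent the existence of arbitrarily large locally flat pieces on every level. Calibrating these scales between source and target --- and verifying that the resulting limiting map is a bona fide coarse embedding of $\R^2$ into $\HHH^2$ --- is the delicate part of the argument.
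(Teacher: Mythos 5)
Your proposal follows the same high-level strategy as the paper: exhibit large flat $\R^2$-pieces inside the origami warped cone, exhibit hyperbolic local structure on the Selberg side, feed a hypothetical coarse embedding through an (ultra)limit to obtain a genuine coarse embedding $\R^2 \to \HHH^2$, and contradict the rigidity theorem. However, two points in your framing need correction or sharpening. First, the piecewise-asymptotic-dimension machinery alone does \emph{not} suffice: the paper uses it (\cref{cor:PWasdim}) only to rule out Selberg-type expanders arising from \emph{virtually free} discrete subgroups of $\PSL_2(\R)$, since those have piecewise asdim $1$ while the origami warped cone has piecewise asdim $\geqslant 2$. But cocompact Fuchsian groups (e.g.\ surface groups inside $\SL_2(\QQ)$, as constructed by Takeuchi) yield Selberg-type expanders of piecewise asdim $2$, matching the source, so the invariant cannot distinguish them. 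For that general case the paper runs a direct argument (\cref{MargulisVsSurface}). Second, the mechanism that makes ``pieces of a Selberg-type box space look like balls in $\HHH^2$'' is not automatic — it is the asymptotic faithfulness of the quotient maps $H \twoheadrightarrow H/H_n$ for a box space (\cref{lem:generalised-box}); this is what lets one lift the restrictions of the coarse embedding on $R$-balls of $\R^2$ into the box space back into the Fuchsian group $H$ itself, which embeds into $\HHH^2$. You correctly flag ``calibrating scales'' as the delicate part, but the specific ingredient resolving it is asymptotic faithfulness, not an interaction between piecewise asdim and coarse embeddings. With those two corrections, your plan matches the paper's proof.
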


Roughly speaking, by a Selberg-type expander we mean an expander that is a sequence of graphs obtained as finite quotients of a  finitely generated discrete subgroup of $\PSL_2(\mathbb R)$ or of $\SL_2(\mathbb R)$; see \cref{def:selberg}. Therefore, \cref{differentFromSelberg} asserts that while both our origami expanders and Selberg-type expanders are intimately related to the hyperbolic plane, these two large families of expanders are completely different.

In fact, after relaxing the conclusion of \cref{differentFromSelberg} from the lack of a coarse embedding to the lack of a coarse equivalence (or equivalently, to the lack of a quasi-isometry), we show that many of our origami expanders are even distinct from \emph{any} box space; see \cref{def:boxspace}.

\begin{thmA}\label{fundamentalVanishing} Let $g > 1$, and let $Z_g$ be the staircase of genus $g$ as in \cref{staircaseDef} equipped with the action of $\F_2$ from \cref{thm:F2}. Let ${(\Gamma_n)}_n$ be an expander associated with this action as in \cref{cor:origamiE}.
	The discrete fundamental groups $\pi_{1,r}(\Gamma_n)$ at scale $r$ vanish for $r$ and $n$ sufficiently large. Consequently:
	\begin{enumerate}[(i)]
		\item the expander ${(\Gamma_n)}_n$ is not
		coarsely equivalent to any box space;
		\item the expander ${(\Gamma_n)}_n$ is not
		coarsely equivalent to any warped cone $\cO_H Y$, where $H$ is an infinite group, $Y$ is a manifold, and the action $H \acts Y$ is free.
	\end{enumerate}
\end{thmA}

Once we have proved that our origami expanders are distinct from the previously known expanders mentioned above, a natural problem is to distinguish origami expanders from each other, for different surfaces and actions. This problem is highly non-trivial. The strongest to date coarse rigidity results obtained for box spaces \cite{DK} and warped cones \cites{VigFund,FNvL} make use of discrete fundamental groups, which vanish for many origami expanders by \cref{fundamentalVanishing}. Other standard coarse invariants, such as Property A or asymptotic dimension, are useless for expanders. Moreover, since all our origami expanders (as in \cref{cor:origamiE}) come from warped cones over actions of a fixed group on manifolds of a fixed dimension, and the actions are essentially free, we expect their ``local'' properties around free orbits to be the same. \Cref{hohoho} indeed provides an example of such a local property shared among our origami expanders.

Instead of distinguishing our origami expanders per se, we distinguish them together with the corresponding bundle. More precisely, our origami expander~${(\Gamma_n)}_n$ is
quasi-isometric to the warped cone $\cO_{\F_2}\Sigma$, and together with ${(\Gamma_n)}_n$, we consider the warped cone $\cO_{\SL_2(\Z)}\T^2$, which is
quasi-isometric to the original
Margulis expander. Induced by the canonical branched covering $\rho\colon \Sigma\to \T^2$, there is a sequence \label{sequenceAsFunctionUsage} $P^\rho\colon\cO_{\F_2} \Sigma \to \cO_{\SL_2(\Z)}\T^2$ of coarse maps. We will study the sequence ${(\Gamma_n)}_n$ of graphs together with this sequence of maps. We call two such sequences $P^\rho\colon\cO_{\F_2} \Sigma \to \cO_{\SL_2(\Z)}\T^2$ and $P^{\rho'}\colon\cO_{\F_2} \Sigma' \to \cO_{\SL_2(\Z)}\T^2$
coarsely equivalent if there are
coarse equivalences $\cO_{\F_2} \Sigma\simeq \cO_{\F_2} \Sigma'$ and $\cO_{\SL_2(\Z)}\T^2\simeq \cO_{\SL_2(\Z)}\T^2$ such that the corresponding diagram commutes; see  \cref{bundles} for details.

\begin{thmA}\label{pairwiseDistinctExpanderBundles} For every origami surface $\Sigma$, the sequence $P^\rho\colon\cO_{\F_2} \Sigma \to \cO_{\SL_2(\Z)}\T^2$ of maps from the origami expander from \cref{cor:origamiE} to the original Margulis expander is not a coarse equivalence.
	
	Furthermore, there is an explicit infinite set $I\subseteq \N$ of genera such that for the corresponding staircases $Z_g$ with $g\in I$, the sequences $P^\rho\colon\cO_{\F_2} Z_g \to \cO_{\SL_2(\Z)}\T^2$ are pairwise not coarsely equivalent.
\end{thmA}

\begin{Ack}
	The authors are grateful to Marc Burger, \L{}ukasz Grabowski, and Tadeusz Januszkiewicz for valuable discussions and suggestions at an early stage of the project. They also thank Benson Farb, David Hume, and Alessandro Sisto  for pointing out \cite{EskinFarb1997}, \cite{Li}, and \cite{KL}, respectively. The project got momentum when G.A., D.K.~and T.dL.~participated in the workshop \emph{Geometric Structures in Group Theory} at the Mathematisches Forschungsinstitut Oberwolfach (MFO) in 2020. The three authors thank the organisers for the invitation and the MFO for the excellent working conditions.
	The authors would also like to thank the referee for multiple valuable comments.
\end{Ack}

\section{Preliminaries}

\subsection{Ergodicity and strong ergodicity}
Let $G$ be a finitely generated group with a finite symmetric generating set $S$ containing the identity element $1_G$ of $G$, which is often simply denoted by $1$.

\begin{defn}[Measure-preserving action]
An action $G\acts (X,\mu)\colon (g,x)\mapsto g.x$ of $G$ on a measure space $(X,\mu)$ is called \emph{measurable} if for every $g \in G$, the map $X \to X,\;x \mapsto g.x$ is measurable. A measurable action $G\acts (X,\mu)$ is called \emph{measure preserving} if for every measurable $U\subseteq X$ and every $g \in G$, we have $\mu(U) = \mu(g.U)$.
\end{defn}

\begin{defn}[Invariant / almost invariant subsets]\label{defi:invariant}
Let $G\acts (X,\mu)$ be a measure-preserving action. A measurable subset $U \subseteq X$ is \emph{invariant} if $g.U=U$ for every $g \in G$. (This implies $\mu(g.U \smallsetminus U)=0$.)  A sequence ${(U_n)}_n$ of measurable subsets is \emph{almost invariant} if for every $g \in G$,
\[
	\lim_n \mu( g.U_n \smallsetminus U_n) = 0.
\]
\end{defn}

Observe that
if $g = s_1\dots s_k$ with $s_i\in S$, then
\begin{gather}\label{generators-chaining}
\begin{aligned}
g.U \smallsetminus U &= s_1\dots s_k. U \smallsetminus U \\
&\subseteq
s_1\dots s_{k-1}.(s_k. U \smallsetminus U) \cup s_1\dots s_{k-2}.(s_{k-1}. U \smallsetminus U) \cup \cdots \cup (s_1. U \smallsetminus U).
\end{aligned}
\end{gather}
Therefore, it suffices to check invariance and almost invariance of subsets under the action by generators only, and these properties do not depend on the choice of the finite generating set.

\begin{defn}[Ergodic / strongly ergodic action]
A measure-preserving action $G \curvearrowright (X,\mu)$ on a measure space with $\mu(X)<\infty$ is \emph{ergodic} if every invariant subset $U\subseteq X$ satisfies
$\mu(U)\big( \mu(X) - \mu(U) \big) = 0$. A measure-preserving action $G \curvearrowright X$ on a measure space with $\mu(X)<\infty$ is \emph{strongly ergodic} if every almost invariant sequence ${(U_n)}_n$ satisfies $$\lim_n \mu(U_n)\big( \mu(X) - \mu(U_n) \big) = 0.$$
\end{defn}

\subsection{Spectral gap and measure expansion}
Spectral gap was defined in \cref{defn:spectralgap}. The definition of spectral gap is independent of the choice of the finite generating set~$S$.

We now recall a characterisation of spectral gap for measure-preserving actions. This equivalent notion was defined explicitly in \cite{FV} under the name ``measure expansion''. It can be defined for measurable actions that are not necessarily measure preserving, but in this article, we focus on the measure-preserving case. Similar phenomena had been studied before in~\cites{BY,GMP,BIG}.

\begin{defn}[Measure-expanding action] \label{defn:measexp}
A measure-preserving action $G \acts (X,\mu)$ is \emph{measure expanding} (or \emph{expanding in measure}) if there exists a constant $c > 0$ such that for every measurable subset $U \subseteq X$ of finite measure satisfying $0 < \mu(U) \leqslant \frac{1}{2}\mu(X)$, we have
\[
	\mu(SU)  \geqslant (1 + c)\mu(U),
\]
 where $SU\coloneqq \bigcup_{s \in S} s.U.$
\end{defn}
The inequality $\mu(SU)  \geqslant (1 + c)\mu(U)$ is equivalent to $\mu(SU \smallsetminus U)  \geqslant c\mu(U)$, which is a strong form of non-invariance of the set $U$.

Using \eqref{generators-chaining}, one checks that measure expansion is independent of the choice of the finite generating set, though the expansion constant $c$ might depend on it.

\begin{prop} \label{prop:meSG}
A probability-measure-preserving action $\rho\colon G \acts (X,\mu)$ is measure expanding if and only if it has spectral gap.
\end{prop}
The proof essentially follows from~\cite{schmidtSE}, which uses different terminology. Indeed, the ``if'' implication follows by a standard argument from \cref{defn:spectralgap} applied to the function $\chi_U - \mu(U)$, where $\chi_U$ is the characteristic function of a measurable subset $U\subseteq X$ of finite measure satisfying $0 < \mu(U) \leqslant \frac{1}{2}\mu(X)$.
For the ``only if'' implication, the lack of spectral gap is exactly the assertion of~\cite{schmidtSE}*{Proposition 2.3 (1)}  for $p=2$. Therefore, by~\cite{schmidtSE}*{Proposition 2.3}, this is equivalent to~\cite{schmidtSE}*{Proposition 2.3 (3)}, which coincides with \cite{schmidtSE}*{Lemma 2.1 (2)}. By a result of Rosenblatt in \cite{rosenblatt}, \cite{schmidtSE}*{Lemma 2.1 (2)} is equivalent to \cite{schmidtSE}*{Lemma 2.1 (1)}, which immediately implies the lack of measure expansion.

Complete proofs of Proposition \ref{prop:meSG} can be found in \cite{GMP}*{Lemma 5.2} and  \cite{FV}*{Proposition 7.3}.

\subsection{Ahlfors-regular measures}
The following definition relates measures and metrics. We denote by $B(x,r)$ the closed ball of radius $r$ centred at $x$.

\begin{defn}[Ahlfors-regular measure]
A measure $\mu$ on a metric space $X$ is called \emph{Ahlfors regular} if there exist $m>0$ and $C \geqslant 1$ such that for every $x\in X$ and $r \leqslant \diam X$, we have
\[
	C^{-1} r^m \leqslant \mu(B(x,r))\leqslant C r^m.
\]
\end{defn}

For a finite measure~$\mu$ on a bounded metric space~$X$, it suffices to check the above condition for all $r<\eps$ for a fixed $0 < \eps \leqslant \diam X$. The Lebesgue measure on the plane $\R^2$ is Ahlfors regular with $m=2$. Therefore, the measure on the torus $\T^2$ inherited from $[0,1]^2 \subseteq \R^2$ is also Ahlfors regular with $m=2$.

\section{Lifting measure expansion to extensions}
In this section, we prove a general result that allows to lift
expansion
in measure to extensions of dynamical systems. If $\Sigma' \to \Sigma$ is an $m$-sheeted branched covering of a surface $\Sigma$, then
the normalised measures $\mu$ on $\Sigma'$ and $\lambda$ on $\Sigma$ satisfy \eqref{measures-estimates} below, so our result applies  and we can lift measure expansion from $\Sigma$ to $\Sigma'$.
In \cref{subsec:actionsonsurfaces}, we will use this in the case $\Sigma=\T^2$.

\begin{prop}
\label{schmidt trick}
Let $(X,\mu)$ and $(Y,\lambda)$ be probability spaces with measure-preserving actions of a group $G=\langle S \rangle$. Suppose that there is a $G$-equivariant measurable map $\rho\colon X\to Y$ and a constant $m \in \N$ such that for every measurable $U\subseteq X$, its image $\rho(U)\subseteq Y$ is measurable and we have
\begin{equation}\label{measures-estimates}
	\frac{1}{m} \lambda\big( \rho(U) \big) \leqslant \mu(U) \leqslant \lambda\big( \rho(U) \big).
\end{equation}
If the action  $G \acts Y$ is expanding in measure and the action $G \acts X$ is ergodic, then the action $G \acts X$ is expanding in measure.
\end{prop}
Since $\mu(X) = \lambda(Y)=1$, formula \eqref{measures-estimates} implies that $\mu(\rho^{-1}(V)) = \lambda(V)$ for every measurable set $V\subseteq Y$; in other words, $Y$ is a factor of $X$ with respect to the action of $G$. Indeed, suppose that there exists $V$ contradicting this claim. Then $\mu(\rho^{-1}(V)) < \lambda(V)$, and hence $\mu(\rho^{-1}(Y \smallsetminus V)) > \lambda(Y \smallsetminus V)$, which violates \eqref{measures-estimates}.

As mentioned above, we will only apply \cref{schmidt trick} to $m$-sheeted branched coverings  $\Sigma \to \T^2$. In fact, when $X$ and $Y$ are topological spaces, $\mu$~and $\lambda$ have full supports, and the factor map~$\rho$ is open, one can check that \eqref{measures-estimates} implies that all fibres have cardinality bounded by $m$.

\begin{proof}[Proof of \cref{schmidt trick}]
We proceed in two steps. First, we show that the action $G \acts X$ is strongly ergodic. Suppose this is not the case. It follows from \cite{schmidtSE} (see \cite{AbertElek2012}*{Lemma 2.1} for the explicit statement) that there exists a sequence ${(U_n)}_n$ of almost invariant subsets of $X$ with $\mu(U_n) = \frac {1}{2m}$. Since $\rho$ increases the measure by at most the factor $m$, it is immediate that the sequence $\big(\rho(U_n)\big)_n$ is also almost invariant. Moreover, for all $n$, we have $\lambda\big( \rho(U_n) \big) \in \left[\frac 1 {2m} , \frac 1 2\right]$. This contradicts the definition of expansion in measure for the action $G \acts Y$. Hence, $G \curvearrowright X$ is strongly ergodic.

Now we show that $G \curvearrowright X$ is expanding in measure. Suppose this is not the case. Then there exists a sequence ${(U_n)}_n$ of measurable subsets of $X$ with $0 < \mu(U_n) \leqslant \frac 1 2$ such that $\lim_n \frac{\mu( SU_n \smallsetminus U_n)}{\mu(U_n)}=0$. Strong ergodicity of $G \curvearrowright X$ tells us that $\lim_n \mu(U_n) \big( \mu(X) - \mu(U_n) \big) = 0$, and since $\mu(U_n) \leqslant \frac 1 2$, we conclude $\lim_n \mu(U_n) = 0$.

Let $c$ be the expansion constant associated with the measure expanding action $G \curvearrowright Y$ and the generating set $S$. For $n$ sufficiently large, we have $\mu(U_n)\leqslant \frac 1 {2m}$, so $\lambda(\rho(U_n))\leqslant \frac 1 2$, and therefore,
\begin{equation}\label{expansion-downstairs}
 \lambda\big( S \rho(U_n) \smallsetminus \rho(U_n) \big) \geqslant c \lambda\big(\rho(U_n)\big).
\end{equation}
On the other hand, since $\lim_n \frac{\mu( SU_n \smallsetminus U_n)}{\mu(U_n)}=0$, we may, without loss of generality, assume that for all $n$ sufficiently large,
\[
 \mu( SU_n \smallsetminus U_n) < \frac c m \mu(U_n).
\]
Hence, for all $n$ sufficiently large, we obtain
\begin{align*}
 \lambda\big( S \rho(U_n) \smallsetminus \rho(U_n) \big)
 &= \lambda\big( \rho(SU_n) \smallsetminus \rho(U_n) \big)\\
 &\leqslant \lambda\big( \rho(SU_n \smallsetminus U_n) \big)\\
 &\leqslant m\mu( SU_n \smallsetminus U_n)\\
 &< c \mu(U_n)\\
 &\leqslant c \lambda\big( \rho(U_n) \big),
\end{align*}
contradicting \eqref{expansion-downstairs}. Hence, $G \curvearrowright X$ is expanding in measure.
\end{proof}

\section{Measure-expanding actions on origami surfaces}

\subsection{An action of \texorpdfstring{$\GL_2(\mathbb{Z})$}{GL2(Z)} on the torus}\label{subsec:GL}

Let $\mathbb{T}^2=\mathbb{R}^2 / \mathbb{Z}^2$ be the $2$-torus, which we identify, as a measure space, with $[0,1)^2 \subseteq \R^2$ equipped with Lebesgue measure. The group $\GL_2(\mathbb{Z})$ naturally acts on $\R^2$ by left multiplication. This action descends to an action on $\mathbb{T}^2$, which is explicitly given by
\begin{eqnarray}\label{eq:T2}
	\begin{pmatrix} p & q\\ r & s \end{pmatrix} . \begin{pmatrix} x \\ y \end{pmatrix}: = \begin{pmatrix} px + qy \\ rx + sy \end{pmatrix}.
\end{eqnarray}
\begin{defn}[Generic point]
 A point $\begin{pmatrix} x \\ y \end{pmatrix} \in \R^2$ is called \emph{generic} if the set $\{1,x,y\}$ is linearly independent over $\QQ$. A point $\begin{pmatrix} x \\ y \end{pmatrix} \in \T^2$ is called \emph{generic} if one (or equivalently, every one) of its preimages in $\R^2$ is generic.
\end{defn}

The generic points form a set of measure $1$.

\begin{lem} \label{freeness on torus}
Let $\begin{pmatrix} x \\ y \end{pmatrix}\in \mathbb{T}^2 $ be a generic point. The orbit $\GL_2(\mathbb{Z}) . \begin{pmatrix} x \\ y \end{pmatrix}$ in $\mathbb{T}^2$ is free and consists solely of generic points.
\end{lem}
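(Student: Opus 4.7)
The plan is to verify the two assertions separately but using the same core observation: the action lifts to $\R^2$, where genericity is a clean linear-algebra condition over $\Q$.

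First, I would lift the problem to $\R^2$. Fix a preimage $\begin{pmatrix} x_0 \\ y_0 \end{pmatrix} \in \R^2$ of $\begin{pmatrix} x \\ y \end{pmatrix}$; by hypothesis $\{1, x_0, y_0\}$ is $\Q$-linearly independent. Any preimage of the orbit point $A.\begin{pmatrix} x \\ y \end{pmatrix}$, for $A=\begin{pmatrix} p & q \\ r & s\end{pmatrix}\in \GL_2(\Z)$, differs from $A\begin{pmatrix} x_0 \\ y_0 \end{pmatrix}=\begin{pmatrix} px_0+qy_0 \\ rx_0+sy_0\end{pmatrix}$ by an element of $\Z^2$, and integer shifts of the coordinates do not affect $\Q$-linear independence with $1$.

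For the genericity part, I would take a hypothetical $\Q$-linear relation $a+b(px_0+qy_0)+c(rx_0+sy_0)=0$ with $a,b,c\in \Q$. Grouping coefficients gives $a+(bp+cr)x_0+(bq+cs)y_0=0$, so genericity of $\begin{pmatrix} x_0\\y_0\end{pmatrix}$ forces $a=0$ and $A^{\top}\begin{pmatrix} b\\c\end{pmatrix}=0$. Since $\det A=\pm 1$, we conclude $b=c=0$, which proves that $A.\begin{pmatrix} x \\ y \end{pmatrix}$ is generic.

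For freeness, suppose $A\in \GL_2(\Z)$ stabilises $\begin{pmatrix} x \\ y \end{pmatrix}\in \T^2$. Lifting, this means $A\begin{pmatrix} x_0\\y_0\end{pmatrix}-\begin{pmatrix} x_0\\y_0\end{pmatrix}=\begin{pmatrix} m\\n\end{pmatrix}$ for some $\begin{pmatrix} m\\n\end{pmatrix}\in\Z^2$, i.e.,
\[
(p-1)x_0+qy_0-m=0 \qquad\text{and}\qquad rx_0+(s-1)y_0-n=0.
\]
Each of these is a $\Q$-linear relation among $\{1,x_0,y_0\}$, so all the rational coefficients must vanish: $p=s=1$, $q=r=0$, and $m=n=0$. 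Hence $A=I$, establishing that the stabiliser of a generic point is trivial.

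The argument is almost entirely formal; I do not anticipate a serious obstacle. The only point that deserves a moment's care is ensuring that the choice of lift in $\R^2$ is irrelevant, which is why I first reduce genericity modulo $\Z^2$ to genericity of any chosen preimage.
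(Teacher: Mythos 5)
Your proof is correct and takes essentially the same approach as the paper: both arguments reduce to $\QQ$-linear independence of $\{1,x_0,y_0\}$ after lifting to $\R^2$, and derive that the stabiliser is trivial and that the image remains generic. If anything, you are slightly more careful than the paper's terse write-up in explicitly accounting for the constant term and the $\Z^2$-ambiguity in the lift; the paper leaves these to the reader.
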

\begin{proof}
Let $\gamma = \begin{pmatrix} p & q\\ r & s \end{pmatrix} \in \GL_2(\mathbb{Z})$, and suppose that $\gamma. \begin{pmatrix} x \\ y \end{pmatrix} = \begin{pmatrix} x \\ y \end{pmatrix}$, i.e.
\[
	\begin{pmatrix} x \\ y \end{pmatrix} = \begin{pmatrix} px + qy \\ rx + sy \end{pmatrix}.
\]
Thus,
\[ x = px + qy  \quad \text{ and } \quad y = rx + sy .\]
By the assumption of genericity of $x$ and $y$, we conclude that
\[
	p = 1, \; q = 0 \quad \text{ and } \quad r = 0, \; s = 1,
\]
so $\gamma$ is the identity matrix, showing that the orbit of $\begin{pmatrix} x \\ y \end{pmatrix}$ is free.

Now we prove the genericity of $\begin{pmatrix} px + qy \\ rx + sy \end{pmatrix}$. Let $\mu$ and $\nu$ be rational numbers such that
\[
\mu(px+qy) + \nu(rx+sy) = 0
\]
in $\R/\Z$.
Then we have $0= \mu p+ \nu r = \mu q+\nu s$ by the genericity of
$\begin{pmatrix} x \\ y \end{pmatrix}$, and hence, $(\mu \;\; \nu)\gamma =(0 \;\; 0)$. Since the matrix $\gamma$ is invertible, this forces $\mu = \nu = 0$.
\end{proof}

\subsection{A measure-expanding action \texorpdfstring{$\mathbb{F}_2 \acts \mathbb{T}^2$}{of F\_2 on T\^{}2}}
Consider the measure-preserving action $\SL_2(\mathbb{Z}) \acts \mathbb{T}^2$  obtained by restricting the action $\GL_2(\mathbb{Z}) \acts \mathbb{T}^2$
discussed
in \cref{subsec:GL}. We consider subgroups of $\SL_2(\mathbb{Z})$ generated by special elements. For an integer $k \geqslant 1$, consider the elementary matrices
\begin{equation} \label{eq:generators}
	a_k =\begin{pmatrix} 1 & k \\ 0 & 1 \end{pmatrix} \quad \textrm{and} \quad b_k =\begin{pmatrix} 1 & 0 \\ k & 1 \end{pmatrix},
\end{equation}
with inverses
\[
	a_k^{-1} =\begin{pmatrix} 1 & -k \\ 0 & 1 \end{pmatrix} \quad \textrm{and} \quad b_k^{-1} =\begin{pmatrix} 1 & 0 \\ -k & 1 \end{pmatrix}.
\]

\begin{thm} \label{thm:spectralgaptorus}
	For every $k \in \mathbb{Z}_{\geqslant 1}$, the measure-preserving action $\langle a_k,b_k \rangle \curvearrowright \mathbb{T}^2$ is expanding in measure. Equivalently, the action has spectral gap.
\end{thm}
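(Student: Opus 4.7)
The plan is to reduce to a spectral statement via \cref{prop:meSG}, decompose the Koopman representation on $L^2_0(\T^2)$ using Fourier analysis, and then appeal to an expansion result for Schreier graphs of Zariski-dense subgroups of $\SL_2(\Z)$ acting on $\Z^2 \smallsetminus \{0\}$.

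By \cref{prop:meSG}, expansion in measure and spectral gap are equivalent for this probability-measure-preserving action, so I focus on the latter. Decomposing $L^2_0(\T^2) = \overline{\bigoplus_{n \in \Z^2 \smallsetminus \{0\}} \C\, e_n}$ in the Fourier basis $e_n(x,y) = \exp(2\pi i (n_1 x + n_2 y))$ and using \eqref{eq:T2}, a direct calculation shows that the Koopman operator associated with $\gamma \in \SL_2(\Z)$ sends $e_n$ to $e_{(\gamma^T)^{-1} n}$. Hence the Koopman representation of $\langle a_k, b_k \rangle$ on $L^2_0(\T^2)$ is unitarily equivalent to its permutation representation on $\ell^2(\Z^2 \smallsetminus \{0\})$, and spectral gap becomes the statement that the Schreier graphs on the $\langle a_k, b_k \rangle$-orbits in $\Z^2 \smallsetminus \{0\}$ form a uniform expander family with respect to $\{a_k^{\pm 1}, b_k^{\pm 1}\}$.

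For $k=1$ one has $\langle a_1, b_1 \rangle = \SL_2(\Z)$, and the conclusion is the classical spectral gap theorem of Rosenblatt, Margulis, and Gabber--Galil cited in \cref{sec:introduction}. For $k \geqslant 2$, a ping-pong argument on $\mathbb{RP}^1$ shows that $\langle a_k, b_k \rangle$ is a free group of rank~$2$, and its Zariski closure in $\SL_2$ contains both opposite unipotent one-parameter subgroups, hence equals all of $\SL_2$. Thus $\langle a_k, b_k \rangle$ is Zariski dense in $\SL_2(\Z)$, and the Bourgain--Varj\'u expansion theorem \cite{BV} yields that the Cayley graphs of its congruence quotients form an expander family. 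A standard Fourier-analytic transfer, combining this with a reduction to orbits of primitive vectors in $\Z^2 \smallsetminus \{0\}$, then upgrades this to uniform expansion of the Schreier graphs on the full orbit partition, delivering the desired spectral gap.

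The main obstacle is the infinite-index case $k \geqslant 2$, where the spectral gap cannot be read off from the $\SL_2(\Z)$ result by mere restriction; one must exploit the intrinsic geometric feature of Zariski density. A secondary obstacle is uniformity of the expansion constant across the infinitely many $\langle a_k, b_k \rangle$-orbits in $\Z^2 \smallsetminus \{0\}$, handled by reducing to orbits of primitive vectors and transporting the spectral gap along the $\SL_2(\Z)$-equivariant scaling maps $n \mapsto d n$ for $d \geqslant 1$.
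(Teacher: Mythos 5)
Your proposal starts the same way as the paper -- the Fourier transfer identifying the Koopman representation on $L^2_0(\T^2)$ with the permutation representation on $\ell^2(\Z^2 \smallsetminus \{0\})$ is exactly the first step of the paper's proof (the identities $\widehat{f \circ a_k}=\hat{f} \circ b_k^{-1}$, $\widehat{f \circ b_k}=\hat{f} \circ a_k^{-1}$) -- but then diverges sharply. After the transfer, the paper does \emph{not} invoke any expansion theorem for arithmetic groups: it proves the completely elementary combinatorial inequality $\lvert a_kA \cup a_k^{-1}A \cup b_kA \cup b_k^{-1}A \rvert \geqslant 2\lvert A\rvert$ for every finite $A \subseteq \Z^2\smallsetminus\{0\}$, by partitioning $\Z^2\smallsetminus\{0\}$ into four quadrant-like sets $Q_1,\dots,Q_4$ that are each mapped into themselves by the appropriate pair of generators with disjoint images (this is Gabber--Galil's argument, extended verbatim to all $k\geqslant 1$), and then converts this vertex-expansion bound into a Rayleigh-quotient bound via a discrete Cheeger inequality for infinite graphs. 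So the paper's route is self-contained and treats $k=1$ and $k\geqslant 2$ uniformly, whereas you bifurcate into the $k=1$ case (cite Rosenblatt/Margulis/Gabber--Galil) and the $k\geqslant 2$ case (Zariski density plus the deep Bourgain--Varj\'u super-strong-approximation machinery), which is substantially heavier firepower for the same result.

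Beyond the heaviness, your ``transfer'' step is currently too vague to count as a proof. Bourgain--Varj\'u gives you uniform expansion of the Cayley graphs $\cay(\SL_2(\Z/q\Z), S_q)$ for square-free $q$ coprime to some bad primes, after invoking strong approximation (which you need but do not mention) to know that $\langle a_k,b_k\rangle$ surjects onto $\SL_2(\Z/q\Z)$. To get from there to the isoperimetric inequality on the infinite Schreier graph $\Z^2\smallsetminus\{0\}$, you should: (i) pass from Cayley-graph expansion to Schreier-graph expansion on $(\Z/q\Z)^2\smallsetminus\{0\}$ by quotienting by the stabiliser of a vector; and (ii) for a fixed finite $A \subseteq \Z^2\smallsetminus\{0\}$, reduce mod a sufficiently large prime $q$ (relative to $A$) so that the reduction is injective on $A \cup SA$ and edge-preserving, and so that $\lvert\bar A\rvert$ is negligible compared with $q^2-1$. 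Your stated mechanism -- ``a reduction to orbits of primitive vectors'' and ``transporting the spectral gap along $\SL_2(\Z)$-equivariant scaling maps $n\mapsto dn$'' -- is a red herring here: the scaling maps handle the $\SL_2(\Z)$-orbit decomposition of $\Z^2\smallsetminus\{0\}$ by gcd, but for $k\geqslant 2$ the action of $\langle a_k,b_k\rangle$ has many orbits even among primitive vectors, and the transfer you need is to finite congruence quotients, not between integer orbits. None of this is unfixable, but as written the crucial last step is missing. In short: correct strategy at the start, plausible but under-justified at the end, and overall far more expensive than the paper's elementary two-page computation.
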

It is standard  that for $k\geqslant 2$, the group $\langle a_k,b_k \rangle$ generated by $a_k, b_k$ is a free group~$\mathbb{F}_2$. Thus, for $k \geqslant 2$,  \cref{thm:spectralgaptorus} gives measure-expanding actions $\mathbb{F}_2 \curvearrowright \mathbb{T}^2$.

\Cref{thm:spectralgaptorus} is well known. However, since the measure expansion of our actions on origami surfaces heavily relies on it, we present a proof, along the lines of Gabber--Galil \cite{GG}. They showed the result for $k=1$, but their proof works for all $k \geqslant 1$. Other values of $k$ and other generating sets were considered in \cites{JM, LL, Lee}. The value of $k$ will be important later.

First, note that the matrices $a_k$ and $b_k$ induce a measure-preserving action of $\langle a_k, b_k \rangle$ on $\Z^2 \smallsetminus \{(0,0)\}$, which we equip with the counting measure, denoted by $|\cdot|$.
\begin{lem} \label{lem:Z2expanding}
	For every $k \in \mathbb{Z}_{\geqslant 1}$ and every finite subset $A \subseteq \mathbb{Z}^2 \smallsetminus \{(0,0)\}$, we have
	\[
		\big\lvert a_kA \cup a_k^{-1}A \cup b_kA \cup b_k^{-1}A \big\rvert \geqslant 2\big\lvert A \big\rvert.
	\]
	In particular, the action of $\langle a_k,b_k \rangle$ on $\mathbb{Z}^2 \smallsetminus \{(0,0)\}$ is expanding in measure.
\end{lem}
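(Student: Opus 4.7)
The plan is to construct, for each $v = (x,y) \in \Z^2 \setminus \{(0,0)\}$, two distinct elements $\phi(v), \psi(v)$ of $\{a_k v, a_k^{-1} v, b_k v, b_k^{-1} v\}$ in such a way that the resulting maps $\phi, \psi \colon \Z^2 \setminus \{0\} \to \Z^2 \setminus \{0\}$ are each injective and have globally disjoint images. Given this, for any finite $A \subseteq \Z^2 \setminus \{0\}$ the sets $\phi(A)$ and $\psi(A)$ are disjoint subsets of $a_k A \cup a_k^{-1} A \cup b_k A \cup b_k^{-1} A$, each of cardinality $|A|$, so their union has size $2|A|$, yielding the desired estimate. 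The ``in particular'' conclusion is immediate with expansion constant $c=1$: the counting measure of $\Z^2 \setminus \{0\}$ is infinite, so every finite subset $A$ trivially satisfies $|A| \leqslant \tfrac{1}{2}|\Z^2 \setminus \{0\}|$.

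To define $\phi$ and $\psi$, I would split $\Z^2 \setminus \{0\}$ according to the quadrant containing $v$. On the diagonal quadrants $Q_1 \cup Q_3 = \{xy > 0\}$ set $\phi = a_k$ and $\psi = b_k$, and on the off-diagonal quadrants $Q_2 \cup Q_4 = \{xy < 0\}$ set $\phi = a_k^{-1}$ and $\psi = b_k^{-1}$. For $v$ on one of the coordinate axes, say $v = (x,0)$ with $x \neq 0$, the maps $a_k^{\pm 1}$ fix $v$, so only $b_k v, b_k^{-1} v$ produce new vectors; set $\{\phi(v), \psi(v)\} := \{b_k v, b_k^{-1} v\}$ with a fixed convention, and analogously for $v = (0, y)$ with the roles of $a_k$ and $b_k$ swapped. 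A direct inspection shows that these choices are quadrant preserving: if $v \in Q_i$ then $\phi(v), \psi(v) \in Q_i$, while an axis point is sent into one of the two open quadrants adjacent to its axis.

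Because within each quadrant $\phi$ acts as a fixed elementary matrix, it is the restriction of a linear automorphism and therefore injective; quadrant preservation then propagates this to global injectivity of $\phi$, and analogously for $\psi$. The main step, and the main obstacle, is verifying that $\phi(v_1) \neq \psi(v_2)$ for all $v_1, v_2 \in \Z^2 \setminus \{0\}$. This reduces to a case analysis on the quadrants or axes containing $v_1$ and $v_2$. In mixed-sign cases such as $v_1 \in Q_1$ and $v_2 \in Q_2$, the identity $\phi(v_1) = \psi(v_2)$ reads $(x_1 + k y_1, y_1) = (x_2, y_2 - k x_2)$, forcing $x_2 = x_1 + k y_1 > 0$, contradicting $x_2 < 0$. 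In same-type cases, e.g.\ $v_1, v_2 \in Q_1$, comparing $(x_1 + k y_1, y_1)$ with $(x_2, y_2 + k x_2)$ yields $y_2 = -k x_1 + (1-k^2) y_1$, which is strictly negative for every $k \geqslant 1$ and every $v_1 \in Q_1$, contradicting $y_2 > 0$. The remaining same-type, opposite-type, and axis cases are dispatched by analogous sign arguments. Once all cases are complete, $\phi(A) \cap \psi(A) = \emptyset$ holds for every finite $A$, and the bound $|SA| \geqslant 2|A|$ follows.
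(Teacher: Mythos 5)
Your overall strategy — exhibit two choice functions $\phi,\psi$ on $\Z^2 \smallsetminus \{0\}$ with disjoint injective images among the four translates — is a repackaging of the paper's argument, which works directly with a partition of $\Z^2 \smallsetminus \{0\}$ into \emph{half-open} quadrants (e.g.\ $Q_1 = \{p>0,\ q\geqslant 0\}$, $Q_2 = \{p\leqslant 0,\ q>0\}$, etc.), shows each $Q_i$ is preserved by its two designated maps, and that those two images inside $Q_i$ are disjoint. The essential difference is that you chose the \emph{open} quadrants $\{xy>0\}$ and $\{xy<0\}$, which forces a separate treatment of the coordinate axes — and that is where the argument breaks.

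The axis case as you have set it up cannot be made to work for $k=1$, which is included in the statement. Take $A=\{(1,0),(0,1)\}$. You constrain $\{\phi(1,0),\psi(1,0)\}$ to be $\{b_1(1,0),\,b_1^{-1}(1,0)\}=\{(1,1),(1,-1)\}$, and $\{\phi(0,1),\psi(0,1)\}$ to be $\{a_1(0,1),\,a_1^{-1}(0,1)\}=\{(1,1),(-1,1)\}$. Both value sets contain $(1,1)$, so whichever way you assign it: either $\phi(1,0)=\phi(0,1)=(1,1)$ (so $\phi$ is not injective), or $\psi(1,0)=\psi(0,1)=(1,1)$ (so $\psi$ is not injective), or $\phi(1,0)=\psi(0,1)=(1,1)$ or $\psi(1,0)=\phi(0,1)=(1,1)$ (so $\phi(A)\cap\psi(A)\neq\emptyset$). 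In every case $|\phi(A)\cup\psi(A)|\leqslant 3<2|A|$, so your argument does not deliver the claimed bound for this $A$. More generally, for $k=1$ the images $b_1(\{(x,0):x>0\})$ and $a_1(\{(0,y):y>0\})$ are both the positive diagonal $\{(n,n):n>0\}$, so no convention on the axes separates them. Your phrase ``the remaining \ldots axis cases are dispatched by analogous sign arguments'' is therefore not correct at $k=1$; the sign argument you used inside open quadrants crucially relies on $x_1>0$ \emph{and} $y_1>0$ simultaneously, which fails on the axes.

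The fix is exactly the paper's choice of a half-open partition: with $(1,0)\in Q_1$ and $(0,1)\in Q_2$, one applies $a_k,b_k$ on $Q_1\cup Q_3$ and $a_k^{-1},b_k^{-1}$ on $Q_2\cup Q_4$, the axes never need to be handled separately, and the sign argument ($p_1>0$ strictly, $q_1\geqslant 0$) goes through for all $k\geqslant 1$. If you adopt that partition, your two-choice-function packaging of the argument is a correct (if slightly more verbose) reformulation of the paper's proof. Your treatment of the ``in particular'' clause (the counting measure is infinite so every finite $A$ qualifies, and $|SA|\geqslant 2|A|$ gives $c=1$) is fine.
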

For $k=1$, a detailed proof can be found in \cite{LL}. We give the computation for arbitrary $k \geqslant 1$ for completeness.
\begin{proof}[Proof of \cref{lem:Z2expanding}]
Let $Q_1\coloneqq \{ (p,q) \in \mathbb{Z}^2 \mid p > 0, q \geqslant 0 \}$, let $Q_2\coloneqq \{ (p,q) \in \mathbb{Z}^2 \mid p \leqslant 0, q > 0 \}$, let $Q_3\coloneqq \{ (p,q) \in \mathbb{Z}^2 \mid p < 0, q \leqslant 0 \}$, and let $Q_4 \coloneqq  \{ (p,q) \in \mathbb{Z}^2 \mid p \geqslant 0, q < 0 \}$. Together, the sets $Q_1$, $Q_2$, $Q_3$ and $Q_4$ form a partition of $\mathbb{Z}^2 \smallsetminus \{(0,0)\}$ and satisfy the following inclusions, which are straightforward to verify:
\begin{align*}
	&a_kQ_1,\;b_kQ_1 \subseteq Q_1, \qquad &a_k^{-1}Q_2,\;b_k^{-1}Q_2 \subseteq Q_2,\\
	&a_kQ_3,\;b_kQ_3 \subseteq Q_3, \qquad &a_k^{-1}Q_4,\;b_k^{-1}Q_4 \subseteq Q_4.
\end{align*}
Also, the following identities hold:
\begin{align*}
	&a_kQ_1 \cap b_kQ_1 = \emptyset, \qquad &a_k^{-1}Q_2 \cap b_k^{-1}Q_2 = \emptyset,\\
	&a_kQ_3 \cap b_kQ_3 = \emptyset, \qquad &a_k^{-1}Q_4 \cap b_k^{-1}Q_4 = \emptyset.
\end{align*}
Now, let $A$ be a finite subset of $\mathbb{Z}^2 \smallsetminus \{(0,0)\}$, and set $A_i \coloneqq  A \cap Q_i$ for $i=1,2,3,4$, so that $A = A_1 \sqcup A_2 \sqcup A_3 \sqcup A_4$. Using the above identities and the fact that the action is measure preserving, the result now follows from a direct computation:
\begin{align*}
	\big\lvert a_kA &\cup a_k^{-1}A \cup b_kA \cup b_k^{-1}A\big\rvert \\
	&\geqslant \big\lvert a_kA_1 \cup b_kA_1 \cup a_k^{-1}A_2 \cup b_k^{-1}A_2 \cup a_kA_3 \cup b_kA_3 \cup a_k^{-1}A_4 \cup b_k^{-1}A_4\big\rvert \\
		&= \big\lvert a_kA_1 \big\rvert + \big\lvert b_kA_1\big\rvert + \big\lvert a_k^{-1}A_2\big\rvert + \big\lvert b_k^{-1}A_2\big\rvert + \big\lvert a_kA_3\big\rvert + \big\lvert b_kA_3\big\rvert + \big\lvert a_k^{-1}A_4\big\rvert + \big\lvert b_k^{-1}A_4\big\rvert \\
	&= 2\left(\big\lvert A_1\big\rvert+\big\lvert A_2\big\rvert+ \big\lvert A_3\big\rvert+\big\lvert A_4\big\rvert\right) \\
	&= 2\big\lvert A\big\rvert. \qedhere
\end{align*}
\end{proof}
We now use the Fourier transform to transfer the measure expansion from  \cref{lem:Z2expanding} to $\mathbb{T}^2$. Let $\Gamma=(V,E)$ denote the graph with $V\coloneqq \Z^2 \smallsetminus \{(0,0)\}$, where each vertex $(x,y)$ is adjacent to the following vertices (some of them may coincide):
\[
	a_k\begin{pmatrix} x \\ y \end{pmatrix} = \begin{pmatrix} x + ky \\ y \end{pmatrix}, \quad b_k\begin{pmatrix} x \\ y \end{pmatrix} =\begin{pmatrix} x \\ kx + y \end{pmatrix}, \quad a_k^{-1}\begin{pmatrix} x \\ y \end{pmatrix}=\begin{pmatrix} x - ky \\ y \end{pmatrix}, \quad b_k^{-1}\begin{pmatrix} x \\ y \end{pmatrix}=\begin{pmatrix} x \\ -kx + y \end{pmatrix}.
\]
We use a simplification of the argument of Gabber--Galil from \cite{Lee}. The \emph{Rayleigh quotient} of a non-zero square-summable function $f \colon V \to \mathbb{C}$ is defined as
\[
	\mathcal{R}_{\Gamma}(f) \coloneqq  \frac{\sum_{\{v,w\} \in E} |f(v)-f(w)|^2}{\sum_{v \in V} |f(v)|^2}.
\]
By a version of the discrete Cheeger inequality \cite{Lee}*{Lemma 2.1}, for every countable graph $\Gamma=(V,E)$ with a uniform bound $D$ on the degrees and every function $f \in \ell^2(V)$, there exists a finite subset $A \subseteq \supp(f) \subseteq V$ such that
\begin{equation} \label{eq:discretecheeger}
	\frac{|\partial A|}{|A|} \leqslant \sqrt{2D\mathcal{R}_{\Gamma}(f)}.
\end{equation}
By \cref{lem:Z2expanding}, it follows that the left-hand side of \eqref{eq:discretecheeger} is strictly larger than $0$, uniformly over all finite subsets $A \subseteq V$.

\begin{proof}[Proof of \cref{thm:spectralgaptorus}]
Recall that we identify $\mathbb{T}^2 = \mathbb{R}^2 / \mathbb{Z}^2$ with $[0,1)^2$ with Lebesgue measure~$\mu$. For every $p,q \in \mathbb{Z}$, the function $\chi_{p,q} \colon \mathbb{T}^2 \to \mathbb{C}$ defined by $\chi_{p,q}(x,y)\coloneqq e^{2\pi i(px+qy)}$ defines a character of $\mathbb{T}^2$ and every character is of this form. Thus, the character group can be identified with $\mathbb{Z}^2$. Let $\mathcal{F} \colon L^2(\mathbb{T}^2) \to \ell^2(\mathbb{Z}^2),\, f \mapsto \hat{f}$ denote the Fourier transform, which is given by
\[
	\hat{f}(p,q) \coloneqq  \int_{\mathbb{T}^2} f(x,y) e^{-2\pi i(px+qy)} d\mu(x,y).
\]
The set $\{\chi_{p,q} \mid p,q \in \mathbb{Z}\}$ is an orthonormal basis for the Hilbert space $L^2(\mathbb{T}^2)$ and every $f \in L^2(\mathbb{T}^2)$ can be written as a linear combination $f = \sum_{p,q \in \mathbb{Z}} \hat{f}(p,q)\chi_{p,q}$, where the coefficients $\hat{f}(p,q)$ are given by the Fourier transform. The action of $\langle a_k,b_k \rangle$ on $\mathbb{T}^2$ satisfies the following relations:
\[
	\chi_{p,q} \circ a_k = \chi_{p,kp+q}, \qquad \chi_{p,q} \circ b_k = \chi_{p+kq,q}.
\]
It follows that
\begin{equation} \label{eq:fouriertransformab}
	\widehat{f \circ a_k}=\hat{f} \circ b_k^{-1}, \qquad \widehat{f \circ b_k}=\hat{f} \circ a_k^{-1}.
\end{equation}
These identities were essentially already observed in \cite{rosenblatt}; see also \cite{schmidt}. We can now compute the spectral gap; cf.~\cref{defn:spectralgap}:
\begin{align*}
	\min \left\{ \frac{ \| f \circ a_k - f \|_2 + \| f \circ b_k - f \|_2 + \| f \circ a_k^{-1} - f \|_2 + \| f \circ b_k^{-1} - f \|_2}{ \| f \|_2 } \; \bigg\vert \; f \in L^2_0(\mathbb{T}^2), \; f \neq 0 \right\}
\end{align*}
\begin{align*}
	&= \min \left\{ \frac{ \sum_{T\in\{a_k,\, b_k,\, a_k^{-1},\, b_k^{-1}\}} \| \widehat{f \circ T} - \hat{f} \|_2 }{ \| \hat{f} \|_2 } \; \bigg\vert \; \hat{f} \in \ell^2(\mathbb{Z}^2), \; \hat{f}(0,0)=0,\; \hat{f} \neq 0 \right\} \\
	&= \min \left\{ \frac{ \sum_{T\in\{a_k,\, b_k,\, a_k^{-1},\, b_k^{-1}\}} \| \hat{f} \circ T - \hat{f} \|_2 }{ \| \hat{f} \|_2 } \; \bigg\vert \; \hat{f} \in \ell^2(\mathbb{Z}^2), \; \hat{f}(0,0)=0, \; \hat{f} \neq 0 \right\} \\
	&\geqslant 2 \min \left\{ \sqrt{\mathcal{R}_{\Gamma}(\hat{f})} \; \bigg\vert \; \hat{f} \in \ell^2(\mathbb{Z}^2), \; \hat{f}(0,0)=0, \; \hat{f} \neq 0 \right\},
\end{align*}
where the factor of $2$ comes from double-counting the edges.
By \cref{lem:Z2expanding} and the version of the discrete Cheeger inequality recalled above, the right-hand side is strictly larger than $0$, showing that the action $\mathbb{F}_2 \curvearrowright \mathbb{T}^2$ is measure expanding.
\end{proof}

\subsection{Measure-expanding actions of \texorpdfstring{$\mathbb{F}_2$}{F\_2} on surfaces} \label{subsec:actionsonsurfaces}

We now use the action on the torus to construct a family of measure-expanding actions on origami surfaces of arbitrary genus $g \geqslant 1$.

We first recall the notion of an origami surface. Particular cases of origami surfaces were first studied in \cites{Thurston,Veech}; the name origami was introduced in \cite{Lochak05}.
\begin{defn}[Origami]
\label{defn:origami}
An \emph{origami datum} is a triple $(m, \sigma, \tau)$, where $m$ is a positive integer, and $\sigma$ and $\tau$ are elements of the symmetric group of rank $m$ such that the subgroup they generate acts transitively on $\{1,\dots,m\}$.

An \emph{origami surface} (or a \emph{square-tiled translation surface}) associated with the origami datum $(m, \sigma, \tau)$ is a topological space $\Sigma$ constructed as follows:
Let $\{P_1, \dots, P_m\}$ be a set of copies of the square $[0,1]^2$ together with homeomorphisms $\rho_i \colon P_i \to [0,1]^2$. The square $[0,1]^2$ has left, right, top, and bottom sides, defined in the obvious way, and every square $P_i$ inherits such sides via $\rho_i$. The space $\Sigma$ is obtained by gluing, for every $i \in \{1,\ldots,m\}$, the left side of $P_i$ to the right side of $P_{\sigma(i)}$, in such a way that precomposing this gluing with ${\rho_i}^{-1}$ and postcomposing with $\rho_{\sigma(i)}$ yields a map sending the left side of $[0,1]^2$ to its right side by translation. We perform analogous gluings identifying the top side of $P_i$ with the bottom side of $P_{\tau(i)}$ for every $i \in \{1,\ldots,m\}$.

We endow $\Sigma$ with the length metric induced from declaring the length of paths on the squares $P_i$ to be the pull-back of the usual length on $[0,1]^2$ via $\rho_i$.
\end{defn}
The maps $\rho_i$ combine to a single map $\rho \colon \Sigma \to \T^2$, which is a branched covering of the torus $\T^2$ that branches over at most one point. 

A quick Euler characteristic count using the observation that the valence of every vertex in an origami surface is a positive multiple of $4$ shows that the sphere $\Sp^2$ is not an origami surface.

By definition, our origami surfaces are connected, since we assume that the group generated by the permutations $\sigma$ and $\tau$ acts transitively on $\{1,\ldots,m\}$. In fact, topologically, $\Sigma$ is always a connected, closed, orientable surface. It also has a measure $\mu$ of total measure $m$, inherited on each of the squares $P_i$ from the Lebesgue measure on $[0,1]^2$.

We depict origami surfaces geometrically by drawing $m$ squares in the plane, glued according to the permutations $\sigma$ and $\tau$, and with opposite (outer) edges identified (according to $\sigma$ and $\tau$). An example is given in \cref{Lshape}.

\begin{figure}
 \includegraphics[scale=0.6]{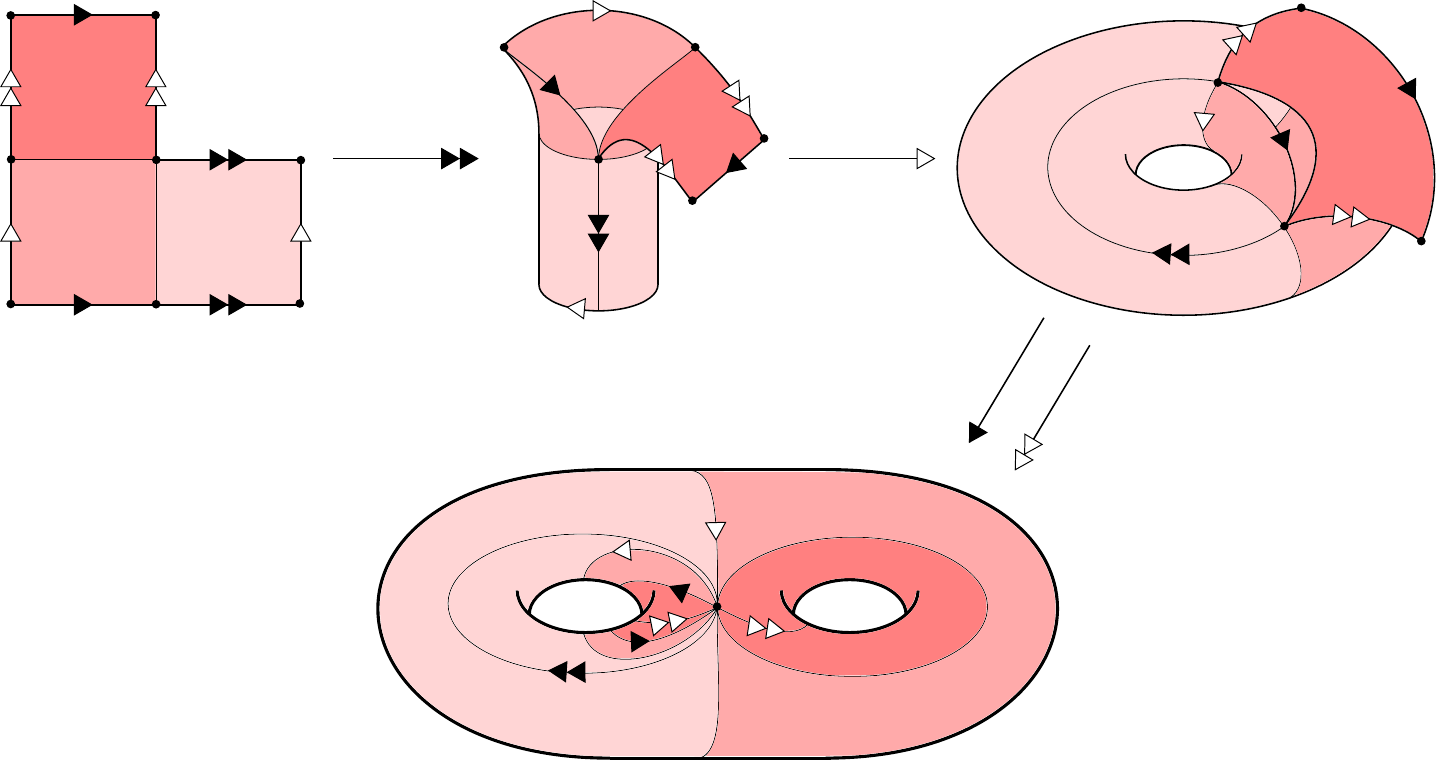}
  \caption{Constructing an origami surface of genus $2$}
 \label{Lshape}
\end{figure}

Since the measures $\mu$ on $\Sigma$ and $\lambda$ on $\T^2$ are both induced by the Lebesgue measure on $[0,1]^2$, we have $\lambda(\rho(U)) \leqslant \mu(U)$ for every measurable $U \subseteq \Sigma$. In the other direction, since $\Sigma$ consists of $m$ squares, we have $\lambda(\rho(U)) \geqslant \frac{1}{m}\mu(U)$. Hence, the map $\rho\colon \Sigma\to \T^2$ satisfies \eqref{measures-estimates} after normalising $\mu$ by $m$.

The image of a ball $B(y,r)$ in $\Sigma$ is the ball $B(\rho(y),r)$ of the same radius in $\T^2$. Hence, we have
\[ \lambda(B(\rho(y),r)) \leqslant \mu(B(y,r)) \leqslant m \lambda(B(\rho(y),r)).\]
Therefore, the measure $\mu$ is Ahlfors-regular because $\lambda$ is.

\begin{defn}[Arrangement maps]
Let $\Sigma$ be an origami surface, and let $P_i$ be a square of $\Sigma$. The \emph{horizontal arrangement map} associated with $P_i$ is the unique map $h_i \colon \R \times [0,1] \to \Sigma$  satisfying the following:
 \begin{itemize}
  \item for every $j \in \Z$, postcomposing the restriction of $h_i$ to $[j,j+1]\times[0,1]$ with $\rho$ coincides with the composition of the translation by $(-j,0)$ with the natural quotient map $[0,1]^2 \to \T^2$;
  \item the square $[0,1]^2$ is mapped to $P_i$.
 \end{itemize}
The \emph{vertical arrangement map} associated with $P_i$ is the map $v_i \colon [0,1] \times \R \to\Sigma$ defined analogously.
\end{defn}
Intuitively, the horizontal arrangement map $h_i$ is an arrangement of the squares of $\Sigma$, where we start with $P_i$, and then glue to it its left and right neighbours in $\Sigma$, as specified by the permutations of the origami datum; we then continue, and glue further left and right neighbours. We end up with a cyclically repeated pattern, whose period is at most $m$ and is a divisor of the order of $\sigma$.

Our goal is to construct a measure-expanding action of $\F_2$ on $\Sigma$. Let $k\geqslant 2$ be any multiple of the orders of $\sigma$ and $\tau$, and consider the matrices $a_k$ and $b_k$ from \eqref{eq:generators}. A particular choice we can make for $k$ is to set $k \coloneqq  m!$ (unless $m=1$), though later we will use the fact that we have flexibility in picking $k$. Recall that $a_k$ and $b_k$ generate a free group of rank $2$ that acts on $\R^2$ and on $\T^2$.

Observe that the matrices $a_k$ and $b_k$, when acting on $\R^2$, preserve the subsets $\R \times [0,1]$ and $[0,1]\times \R$, respectively. We use this fact to define the action of $a_k$ and $b_k$ on horizontal and vertical arrangements, respectively, and thus on $\Sigma$.

Consider a square $P_i$ of $\Sigma$, and let $h_i \colon \R \times [0,1] \to \Sigma$ denote the corresponding horizontal arrangement map. The action of $a_k$ on $P_i$ is obtained by first lifting $P_i$ to $[0,1]^2$ under $h_i^{-1}$, then applying the matrix $a_k$, and then projecting back to $\Sigma$ using $h_i$. This gives a well-defined action of $a_k$ on $\Sigma$: On horizontal edges this follows from the fact that $k$ is a multiple of the order of $\sigma$, and hence $a_k$ fixes the top and bottom sides of all squares pointwise; for vertical edges the claim follows from an easy computation. We define the action of $b_k$ analogously, using the vertical arrangement map.
It is clear that $\rho$ is now an $\F_2$-equivariant map, where the action of $\mathbb{F}_2$ on $\T^2$ is as in \cref{thm:spectralgaptorus}.

\begin{thm} \label{thm:origamimeasureexpanding}
 The action $\F_2 \curvearrowright \Sigma$ we have just described is a measure-preserving and measure-expanding action by Lipschitz homeomorphisms.
\end{thm}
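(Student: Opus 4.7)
The plan is to apply \cref{schmidt trick} to the $\F_2$-equivariant branched covering $\rho\colon\Sigma\to\T^2$. The basic properties of the action are easy consequences of the construction: well-definedness follows from the explanation preceding the theorem (which crucially uses that $k$ is a multiple of the orders of $\sigma$ and $\tau$); Lipschitz continuity is immediate since in the local coordinates of each square $P_i$ the action is implemented by the linear maps $a_k, b_k \in \SL_2(\Z)$, so the induced maps on $\Sigma$ are bi-Lipschitz with constants at most the operator norms of $a_k$ and $b_k$; and the same linearity combined with $\det a_k = \det b_k = 1$ yields measure-preservation for the Lebesgue-type measure $\mu$ on $\Sigma$.

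To obtain measure expansion, I would invoke \cref{schmidt trick} with $X=\Sigma$, $Y=\T^2$, and factor map $\rho$. The equivariance of $\rho$ and the measure inequalities \eqref{measures-estimates} were both verified in the setup preceding the theorem, and measure expansion of the base action $\F_2\curvearrowright\T^2$ is exactly \cref{thm:spectralgaptorus}. The only hypothesis of \cref{schmidt trick} that still requires proof is the ergodicity of the upstairs action $\F_2\curvearrowright\Sigma$.

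This ergodicity step is the main obstacle, and I would handle it by observing that even the cyclic subgroup generated by the single element $a_k b_k \in \F_2$ acts ergodically on $(\Sigma,\mu)$. Its derivative equals the matrix
\[
 a_k b_k = \begin{pmatrix} 1+k^2 & k \\ k & 1 \end{pmatrix},
\]
which has trace $k^2+2\geqslant 3$ and is thus a hyperbolic element of $\SL_2(\Z)$. Consequently, $a_k b_k$ is an affine automorphism of the translation surface $\Sigma$ with hyperbolic derivative, that is, a pseudo-Anosov homeomorphism. By a classical result in Teichmüller dynamics (going back to Veech and Masur), every pseudo-Anosov homeomorphism of a compact translation surface is ergodic (in fact Bernoulli) with respect to Lebesgue measure. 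Hence $\langle a_k b_k\rangle\leqslant\F_2$ already acts ergodically on $(\Sigma,\mu)$, and so does $\F_2$.

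With all hypotheses of \cref{schmidt trick} verified, I conclude that $\F_2\curvearrowright\Sigma$ is expanding in measure. The equivalent spectral gap formulation stated elsewhere in the article then follows from \cref{prop:meSG}.
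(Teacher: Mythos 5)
Your overall structure agrees with the paper's: you correctly identify that Lipschitzness and measure-preservation are immediate from the construction, that \cref{schmidt trick} (applied to $\rho\colon\Sigma\to\T^2$ with \cref{thm:spectralgaptorus} supplying expansion downstairs) does the heavy lifting, and that the one substantive point left to verify is ergodicity of $\F_2\acts\Sigma$.

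Where you diverge is in how you prove ergodicity, and both routes are valid. The paper's argument is elementary and self-contained: it uses the Fubini decomposition of $\Sigma$ into horizontal and vertical loops, on each of which $a_k$ (respectively $b_k$) acts by an irrational rotation, so any invariant set is, on almost every loop, either null or conull; chaining horizontal and vertical loops across the connected surface then forces the set to be null or conull globally. Your argument instead observes that the derivative of the single element $a_kb_k$ equals $\left(\begin{smallmatrix}1+k^2 & k\\ k & 1\end{smallmatrix}\right)$, whose trace $k^2+2\geqslant 3$ makes it hyperbolic, so the corresponding affine automorphism of $\Sigma$ is pseudo-Anosov and hence ergodic (indeed Bernoulli) for the flat Lebesgue measure. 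That conclusion is correct, and it gives the slightly stronger information that a cyclic subgroup already acts ergodically; the trade-off is that it invokes nontrivial machinery from surface dynamics (the Thurston--Veech characterisation of affine pseudo-Anosov maps and the Markov-partition/Bernoulli theory for pseudo-Anosov homeomorphisms, more commonly attributed to Thurston and Fathi--Laudenbach--Po\'enaru and Fathi--Shub than to Veech and Masur), whereas the paper's proof requires only ergodicity of irrational circle rotations and Fubini's theorem. If you wanted a citation-free write-up in the spirit of the paper, the loop argument is preferable; if you are willing to cite, your pseudo-Anosov shortcut is clean and correct.
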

\Cref{thm:origamimeasureexpanding} directly implies \cref{thm:F2}.
\begin{proof}
The fact that the action is a measure-preserving action by Lipschitz homeomorphisms is clear from the construction.

Observe that the map $\rho \colon \Sigma \to \T^2$ satisfies the assumptions of \cref{schmidt trick} (up to rescaling the measure on $\Sigma$). Since the action $\F_2 \curvearrowright \T^2$ is expanding in measure, we only need to verify that the action $\F_2 \curvearrowright \Sigma$ is ergodic.

To this end, let $U \subseteq \Sigma$ be an invariant measurable subset with $\mu(U) >0$.
Consider the set $L_0$ of all connected components of preimages under $\rho$ of all (horizontal) loops of the form $[0,1) \times \{x \} \subset [0,1)^2 = \T^2$ with $x \in [0,1] \smallsetminus \mathbb Q$. Note that $\bigcup L_0$ is a measurable subset of $\Sigma$ of full measure.
Note also that every $l \in L_0$ is a horizontal loop of integral length carrying it own, usual Lebesgue measure.

Since the measure on $\Sigma$ is obtained from product measures on the squares forming $\Sigma$, Fubini's theorem gives us a subset $L_1$ of $L_0$ such that $\bigcup L_1$ is again measurable and of full measure, and such that for every $l \in L_1$ the set $l \cap U$ is measurable in $l$.

The action of $a_k$ on every loop $l$ is by irrational rotation, and hence is ergodic. We conclude that for every $l \in L_1$, the subset $l \cap U$ is either null or of full measure, since $l$, and hence $l\cap U$, is $a_k$-invariant.
We repeat the argument for vertical loops and arrive at analogous conclusions with sets $L_0'$ and $L'_1$ of vertical loops.

Let $Q$ be a square of $\Sigma$ with $\mu(Q \cap U) > 0$. Identifying $Q = [0,1]^2$, we see that, up to measure zero, $U \cap Q = [0,1] \times R$ for a measurable subset $R \subseteq [0,1]$ of positive measure $\lambda(R)$.
Recall that $\bigcup L'_1$ is of full measure in $\Sigma$, and hence, in particular $L'_1$ contains almost all vertical loops passing through $Q$. Every such loop intersects
$[0,1]\times R$
in a subset of measure at least $\lambda(R)$.
This implies that $\mu(Q \cap U) =1$, using the ergodicity of the action of $b_k$ on the loops in $L'_0$, as before. To summarise: if $U$ intersects a square in a subset of positive measure, then the intersection is full in~$Q$.

Since $\mu(U) > 0$, there is at least one square $Q$ as above. But then the precise description of the action of $\F_2$, together with the fact that $\Sigma$ is connected, immediately allows us to conclude that $U$ intersects every square fully, and hence $\mu(U) = \mu(\Sigma) = m$. This finishes the proof.
\end{proof}

\begin{rem}
 \label{cor:locallyFree}
 It is immediate from \cref{freeness on torus} that the orbits of points $v \in \Sigma$ such that $\rho(v)$ is generic are free.
\end{rem}

\subsection{Staircases}
Let us introduce a particular family of origamis that will be important later.

\begin{defn}[Staircases]\label{staircaseDef}
 Given a positive integer $g$, the \emph{staircase of genus $g$}, denoted $Z_g$, is the origami surface associated to the origami datum $(2g-1,\sigma,\tau)$, where $\sigma = (1 \;\; 2)(3 \;\; 4)\cdots(2g-3 \;\; 2g-2)$ and $\tau = (2 \;\; 3)(4 \;\; 5)\cdots(2g-2 \;\; 2g-1)$.

 We endow the surface with the same action of $\F_2$ as before.
\end{defn}
As indicated by the notation, a staircase of genus $g$ gives rise to a surface of genus $g$.

\begin{figure}
 \includegraphics[scale=0.5]{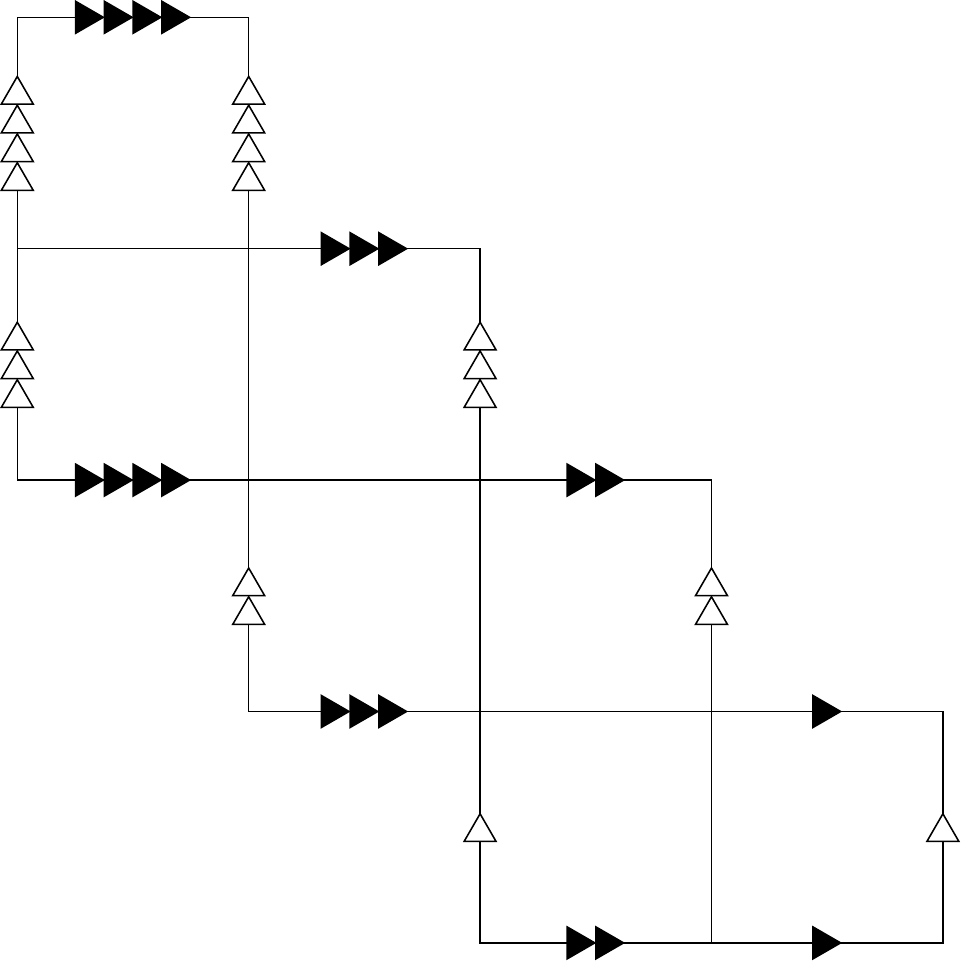}
 \caption{The staircase of genus 4}
 \label{staircase4}
\end{figure}

Observe that the orders of $\sigma$ and $\tau$ are both $2$, and we can take $k=2$ in the action of \cref{thm:origamimeasureexpanding}. Crucially, this is independent of $g$, and hence every $Z_g$ is a branched cover of $\T^2$ in an $\F_2$-equivariant way, with respect to a single action of $\F_2$ on $\T^2$, independent of $g$.

\section{Large-scale geometry, warped cones, and expanders}
\subsection{Uniform quasi-isometries and coarse embeddings} \label{subsec:ceqi}
We assume the reader is familiar with the following standard notions from large-scale geometry: quasi-isometry, quasi-geodesic space, bi-Lipschitz equivalence,  and the Rips complex $P_R(X)$ of a metric space $X$ for $R\geqslant 0$. We refer to the books by Bridson--Haefliger \cite{BH}, Roe \cite{roe:book}, and Nowak--Yu \cite{NY}, all of which are excellent sources on coarse geometry.

\begin{defn}[Uniform quasi-isometry]\label{def:qi}
Two families ${(X_n)}_n$ and ${(Y_n)}_n$ of metric spaces are {\it
uniformly quasi-isometric\/} if there exist functions $f_n\colon X_n\to Y_n$ and
constants $L\geqslant 1$ and $A\geqslant 0$ such that for all $n$ and all $x,x'\in X_n$ and $y\in Y_n$, we have
\begin{enumerate}[(i)]
  \item\label{QI-i} $L^{-1} d_{X_n}(x,x') - A \leqslant d_{Y_n}(f_n(x),f_n(x')) \leqslant L d_{X_n}(x,x') + A$,
    \item $d_{Y_n}(y, f_n(X_n))\leqslant A$.
\end{enumerate}
\end{defn}

Replacing the affine functions in \cref{def:qi} \eqref{QI-i} with arbitrary proper functions yields the following definition.

\begin{defn}[Uniform coarse equivalence]\label{def:ce}
Two families ${(X_n)}_n$ and ${(Y_n)}_n$ of metric spaces are \emph{uniformly coarsely equivalent}
if there exist functions $f_n\colon X_n\to Y_n$, two non-decreasing functions $\eta,\zeta \colon [0,\infty) \to [0,\infty)$ with $\lim_{r \to \infty}\eta(r)=\infty$,
and a constant $A\geqslant 0$ such that
for all
$x, x'\in X_n$ and $y\in Y_n$, we have
\begin{enumerate}[(i)]
  \item\label{CEq-i} $\eta \circ d_{X_n}(x,x') \leqslant d_{Y_n}(f_n(x),f_n(x')) \leqslant \zeta \circ d_{X_n}(x,x'),$
    \item\label{CEq-ii} $d_{Y_n}(y, f_n(X_n))\leqslant A$.
\end{enumerate}
\end{defn}

These definitions apply to individual spaces, which can be treated as constant sequences. We thus recover the usual definitions that two spaces $X$ and $Y$ are quasi-isometric or that they are {coarsely equivalent}, respectively.

If condition (\ref*{CEq-i})  is satisfied, but not necessarily (\ref*{CEq-ii}), we talk about embeddings. That is, applied to individual spaces $X$ and $Y$, \cref{def:qi} \eqref{QI-i} defines a \emph{quasi-isometric embedding} $f\colon X\to Y$, and \cref{def:ce} \eqref{CEq-i} defines a \emph{coarse embedding} $f\colon X\to Y$; if only the second inequality of \cref{def:ce} \eqref{CEq-i}  is satisfied, it defines a
\emph{bornologous} map $f\colon X\to Y$.

\begin{rem}\label{rem:cquasi}
If two families are uniformly quasi-isometric, then they are uniformly coarsely equivalent. Conversely, every coarse equivalence between quasi-geodesic metric spaces is a quasi-isometry~\cite{Gr:As}*{Section 0.2.D}; see e.g.~\cite{NY}*{Corollary~1.4.14} for a proof, which also holds  for families.
\end{rem}

\begin{conv}\label{conv:nounif}
Every map (in particular, every constant map) between bounded metric spaces is a quasi-isometry (for appropriate constants $L\geqslant 1, A\geqslant 0$), and in particular a coarse embedding. Accordingly,
applied to families ${(X_n)}_n$ and ${(Y_n)}_n$ of bounded metric spaces, the specification ``uniform'' from \cref{def:ce,def:qi} is often automatically assumed and omitted for brevity. This is the case in the formulations of our results in \cref{sec:introduction}; see also \cref{rem:single}.
\end{conv}

\begin{defn}[Close maps]
Let $f_1,f_2 \colon X \to Y$ be maps between metric spaces $(X, d_X)$ and $(Y, d_Y)$. If there is $C > 0$ such that $d_Y(f_1(x),f_2(x)) \leqslant C$ for all $x \in X$, then $f_1$ and $f_2$ are called $C$-\emph{close}.
\end{defn}

A quasi-isometric (resp.~coarse) embedding $f \colon X \to Y$ is a quasi-isometry (resp.~coarse equivalence) if and only if there is a quasi-isometric (resp.~coarse) embedding $f' \colon Y \to X$ such that $f' \circ f$ and $f \circ f'$ are close to the identity maps on $X$ and on $Y$, respectively; cf.~\cite{K}.

Throughout the article, we consider quasi-geodesic metric spaces, and we study them up to quasi-isometry. The following known result allows us to focus on graphs. We add the proof for completeness.
\begin{lem}\label{lem:qg}
A metric space is quasi-geodesic if and only if it is quasi-isometric to a graph.
\end{lem}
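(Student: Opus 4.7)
The plan is to prove each direction separately. The easy direction is that any (combinatorial) graph equipped with its standard path metric is geodesic, hence trivially quasi-geodesic, and this property transfers across a quasi-isometry: given a QI $f \colon X \to Y$ with quasi-inverse $f' \colon Y \to X$ and a geodesic in $Y$ from $f(x)$ to $f(y)$, discretising at unit spacings and applying $f'$ produces a chain in $X$ from a point close to $x$ to a point close to $y$ whose step sizes and total length are both controlled by the QI constants. Prepending $x$ and appending $y$ yields a quasi-geodesic chain. So the real work is the nontrivial direction: constructing, from a quasi-geodesic space $X$, a graph $\Gamma$ quasi-isometric to $X$.

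The strategy is the standard net construction. Fix constants $(\lambda, c)$ witnessing that $X$ is quasi-geodesic. By Zorn's lemma, pick a maximal $c$-separated subset $N \subseteq X$; maximality automatically makes $N$ a $c$-dense net. Define $\Gamma$ to have vertex set $N$ and an edge between each pair $p \neq q \in N$ with $d_X(p, q) \leqslant R$ for an appropriately chosen threshold $R$ depending on $c, \lambda$, and equip $\Gamma$ with the combinatorial path metric (every edge of length $1$).

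The content is then to show that the inclusion $\iota \colon N \hookrightarrow X$ is a quasi-isometry from $\Gamma$ onto $X$. Coboundedness of the image is immediate from density of $N$. The upper bound $d_X(\iota(p), \iota(q)) \leqslant R \cdot d_\Gamma(p, q)$ follows from each edge having $X$-length at most $R$. For the reverse inequality \emph{and} for connectedness of $\Gamma$ simultaneously: given $p, q \in N$, take a $(\lambda, c)$-quasi-geodesic $\gamma \colon [a, b] \to X$ from $p$ to $q$, discretise at integer arguments to obtain a chain with bounded $X$-jumps and with number of steps linearly bounded by $d_X(p, q)$, and replace each discretised point by a nearest point of $N$. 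Consecutive points in the replaced chain are at $X$-distance $\leqslant R$, so the chain is a walk in $\Gamma$ realising a linear upper bound on $d_\Gamma(p, q)$.

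The main obstacle is purely definitional: several inequivalent-looking notions of ``quasi-geodesic'' coexist in the literature (chain-based versus quasi-isometric-embedding-based, bounding step size vs.\ total length vs.\ number of steps), and the bookkeeping of constants must be matched accordingly. The formulation in terms of quasi-isometric embeddings of real intervals is convenient here, because discretising the parameter interval at integers immediately yields a chain whose \emph{number of steps} is linearly bounded in $d_X(p, q)$, which is exactly what controls $d_\Gamma(p, q)$. Any of the alternative formulations reduces to this one after a brief argument (e.g.\ by coalescing arbitrarily small jumps in a chain).
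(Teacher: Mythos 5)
Your proof is correct and takes essentially the same approach as the paper: build a Rips-type graph by joining points at bounded distance, and use the discretisation of quasi-geodesics to get the two-sided distance comparison, with density giving coboundedness. The only difference is that you first pass to a maximal separated net as the vertex set, whereas the paper simply uses all of $X$ as the vertex set (the $1$-skeleton of the Rips complex $P_{L+A}(X)$); the net is not needed for this lemma and is precisely the extra refinement the paper reserves for the bounded-geometry statement in \cref{lem:bg}.
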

\begin{proof}
By definition, a metric space $X$ is quasi-geodesic if there exist $L \geqslant 1$ and ${A \geqslant 0}$ such that for every $x,x'\in X$, there exists an $(L,A)$-quasi-isometric embedding
$\gamma\colon [0, d(x,x')] \to X$ such that $\gamma(0)=x$ and $\gamma(d(x,x'))=x'$. A straightforward adaptation of the proof of \cite{CDP}*{Ch.~3, Lemma 2.8} to quasi-geodesic spaces yields that the canonical embedding $f\colon X \hookrightarrow P_{L+A}^{(1)}(X)$ is an $\left({L+A},1\right)$-quasi-isometric embedding, where $P_{L+A}^{(1)}(X)$ is the 1-skeleton of the Rips complex $P_{L+A}(X)$. Indeed, take $k\in\N$ such that $k-1<d_X(x,x')\leqslant k$.  By the quasi-geodesicity, there exist $a_0, a_1, \ldots, a_{k-1}, a_k \in X$ such that $a_0=x,\, a_k=x',$ and $d_X(a_i, a_{i+1})\leqslant L((i+1)-i)+A=L+A$. Hence, $P_{L+A}^{(1)}(X)$ is connected and $d_{P_{L+A}^{(1)}(X)}(f(x), f(x'))\leqslant k< d_X(x,x')+1.$ For the lower bound, suppose that $d_{P_{L+A}^{(1)}(X)}(f(x), f(x'))= l$. Then, by definition of the Rips complex, there exist $b_0, \ldots, b_l \in X$ such that $b_0=x,\, b_l=x',$ and $d_X(b_i, b_{i+1})\leqslant L+A$, whence $(L+A)^{-1}d_X(x,x') \leqslant d_{P_{L+A}^{(1)}(X)}(f(x), f(x')).$ Since $X$ is $\frac 1 2$-dense in $P_{L+A}^{(1)}(X)$, the above inequalities show that $f\colon X \hookrightarrow P_{L+A}^{(1)}(X)$ is an $\left(L+A, 1\right)$-quasi-isometry, from which the ``only if'' implication of the lemma follows.

Conversely, let $f\colon V\to X$ be an $(L,A)$-quasi-isometry, where $V$ is a graph. The image of a geodesic between any $v, v'\in V$ is an $(L,A)$-quasi-isometric embedding of $[0, d_V(v,v')]$. Since $f(V)$ is $A$-dense in $X$, the ``if'' implication follows.
\end{proof}

We encounter non-discrete metric spaces, for which we use the following general definition of bounded geometry.

\begin{defn}[Bounded geometry]\label{bddGeom}
A metric space $X$ has \emph{bounded geometry} if there exists a constant $C\geqslant 0$ such that for every $D>0$ there is $N_D\in \N$ such that every $C$-separated subset of $X$ of diameter at most $ D$ has cardinality at most $N_D$.
\end{defn}

To the knowledge of the authors, this notion of bounded geometry for metric spaces goes back to \cite{K}; the special case of Riemannian manifolds was already covered in \cite{Gr:HypMan}.

\begin{lem}\label{lem:bg}
A quasi-geodesic metric space has bounded geometry if and only if it is quasi-isometric to a graph with a uniform bound on the degrees of its vertices.
\end{lem}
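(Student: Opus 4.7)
The plan is to realise the equivalence by passing between $X$ and a graph via a maximal $C$-separated net, where $C$ is the separation constant from the bounded-geometry definition. The ``if'' direction reduces to the quasi-isometry invariance of bounded geometry together with its easy verification for bounded-degree graphs. The ``only if'' direction reduces to showing that a suitable net graph has uniformly bounded vertex degrees and that the inclusion of the net into $X$ is a quasi-isometry.

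For the ``if'' direction, I would first check that a graph $G$ with a uniform degree bound $D$ has bounded geometry: a radius-$d$ ball around a vertex meets at most $O(D^{\lceil d\rceil})$ edges, and any $C$-separated set contains at most $\lceil 1/C\rceil$ points inside a single length-one edge, so its cardinality inside a radius-$d$ ball is bounded only in terms of $D$, $C$, and $d$. Then I would verify that bounded geometry transfers across an $(L,A)$-quasi-isometry $f \colon X \to Y$: choosing a separation constant $C_X$ for $X$ with $C_X \geqslant L(C_Y + A)$ forces $f$ to be injective on $C_X$-separated subsets and to map them to $C_Y$-separated subsets of diameter at most $Ld + A$, so the cardinality bound on $Y$ yields one on $X$.

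For the ``only if'' direction, let $(L', A')$ be the quasi-geodesic constants of $X$ and $C$ the separation constant witnessing bounded geometry. I would take a maximal $C$-separated subset $S \subseteq X$ (which is automatically $C$-dense), set $R \defeq L' + A' + 2C$, and form the graph $G$ with vertex set $S$ and edges between distinct points at $X$-distance at most $R$. The degree bound follows from bounded geometry applied to the radius-$R$ ball in $S$ around any vertex, which is $C$-separated of diameter at most $2R$ and therefore has cardinality at most $N_{2R}$. The inequality $d_X \leqslant R \cdot d_G$ on $S$ is immediate from the edge rule. For the reverse estimate $d_G(s, s') \leqslant d_X(s, s') + 1$, I would sample a quasi-geodesic from $s$ to $s'$ at integer parameter values, obtaining a chain with consecutive distances at most $L' + A'$, and then replace each intermediate sample by a $C$-close element of $S$, keeping $s$ and $s'$ as endpoints; consecutive terms of the replaced chain are at distance at most $R$, so they form an edge-path in $G$ of length $\lceil d_X(s, s') \rceil$. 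This simultaneously shows $G$ is connected, and together with $C$-density of $S$ in $X$ it shows that $S \hookrightarrow X$ is a quasi-isometry.

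The only delicate point I expect is the calibration of $R$: it must be large enough to absorb both the quasi-geodesic defect $L' + A'$ and the $2C$ slack introduced when replacing intermediate quasi-geodesic samples by points of $S$, yet any such fixed $R$ remains compatible with the bounded-geometry hypothesis, which then supplies the uniform degree bound $N_{2R}$. Once the constants are fixed, the rest is routine bookkeeping in the two definitions.
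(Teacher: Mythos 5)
Your argument is correct and follows essentially the same route as the paper: both directions are handled by showing bounded geometry transfers along quasi-isometries, and for the ``only if'' direction both take a maximal $C$-separated net and build a Rips-type graph on it at a scale absorbing the quasi-geodesicity constants (the paper invokes the $1$-skeleton of the Rips complex of the net, which is the same object as your edge rule with $R$). The only cosmetic difference is that the paper first notes the net is itself quasi-geodesic and uses its constants, whereas you keep the constants of $X$ and fold in the extra $2C$ slack explicitly.
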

\begin{proof}
The proof is routine.
For the ``if'' implication, note that a bounded-degree graph $V$ has bounded geometry with $C=1$. Let $X$ be a metric space. If $f\colon X\to V$ is an $(L,A)$-quasi-isometry and $T\subseteq X$ is a $C$-separated subset of diameter $D >0$, then $f(T)\subseteq V$ is a $(C/L-A)$-separated subset of diameter at most $LD+A>0$. We take $C> L(A+1)$. We see that $f$ is injective on $T$ and $|T|=|f(T)|\leqslant N_{LD+A}$, where $R\mapsto N_R$ is the function accompanying $C=1$ in \cref{bddGeom} for $V$. Hence, $X$ has bounded geometry.

For the ``only if'' implication, let $X$ be a quasi-geodesic space with bounded geometry. Let $X'$ be a maximal $C$-separated subset of~$X$ for the constant $C$ from \cref{bddGeom}. Then $X'$ is also quasi-geodesic. In the proof of \cref{lem:qg}, we have shown that the 1-skeleton $P_{L+A}^{(1)}(X')$ of the Rips complex of $X'$ is quasi-isometric to~$X'$, and hence to~$X$, where $L\geqslant 1, A\geqslant 0$ are the quasi-geodesicity constants of~$X'$. The maximal degree of the vertices of $P_{L+A}^{(1)}(X')$ is bounded above by  $N_{2(L+A)}(X)$, where $N$ is the function from the definition of bounded geometry of~$X$, since the ball of radius $1$ is of diameter at most $2$.
\end{proof}

\subsection{Warped cones} The following notion is useful for the understanding of the dynamics of group actions from the coarse geometry point of view. It was introduced by Roe in~\cite{roe:foli} for foliations and in~\cite{roe} for group actions.

\begin{defn}[Warped cone] \label{defn:warpedcone}
Let $(Y,d)$ be a compact metric space and $G\acts Y$ a group action by homeomorphisms. For every $t\in (0,\infty)$, let $d_G$ be the largest metric on $\{t\} \times Y$ satisfying
\begin{equation}
d_G\big((t,y),\, (t,y')\big) \leqslant t\cdot d(y,y') \quad \text{and} \quad d_G\big((t,y),\, (t, s.y)\big) \leqslant 1
\end{equation}
for every generator $s \in S$.

The sequence of metric spaces $\cO_G Y \coloneqq  (\{t\}\times Y, d_G)_{t\in \N}$ is called the \emph{warped cone} over the action $G\acts Y$.
\end{defn}

In~\cite{roe}, the warped cone is defined as an appropriate union over $t\in (0,\infty)$ of the levels $\{t\}\times Y$. Since the diameter of these levels tends to zero as $t \to 0$ and tends to infinity as $t \to \infty$ (unless $Y$ is a finite set with a transitive action), the resulting space indeed resembles a cone.

It easily follows from \cref{defn:warpedcone} that $d_G((t,y),\, (t,y')) < 1$ if and only if $d(y,y')<1/t$, and then $d_G((t,y),\, (t,y')) = t\cdot d(y,y')$, so both metrics induce the same topology. Hence, $\cO_G Y$ is not an interesting object from the topological perspective. However, it does exhibit interesting large-scale geometric properties. For example, it is not hard to check that if $Y$ is a geodesic space (or bi-Lipschitz equivalent to such a space), then the family $\cO_G Y$ is quasi-geodesic with the quasi-geodesicity constants $L\geqslant 1$ and $A\geqslant 0$ uniform over $t\in \N$. By \cite{roe}, most naturally occurring warped cones have bounded geometry. In particular, this applies to warped cones over actions by Lipschitz homeomorphisms on surfaces, as considered in the present article. Hence, by \cref{lem:bg}, a typical warped cone $\cO_G Y$ is uniformly quasi-isometric to a sequence of finite, connected graphs with a uniform bound on the degrees of their vertices. The relevance of this observation was noticed in \cite{FV}.

\begin{defn}[$\cO$-graph]\label{def:wcgr}
Every sequence $(\Gamma^{\cO}_n)_n$ of finite, connected graphs $\Gamma^{\cO}_n$ with uniformly bounded degrees that is uniformly quasi-isometric to a warped cone $\cO_G Y$ is called an \emph{$\cO$-graph} associated with $\cO_G Y$. \end{defn}

The following is a slightly generalised version of \cite{FV}*{Theorem~B}.

\begin{thm}\label{VigolosTheorem}
Let $Y$ be a compact space equipped with a geodesic metric and an Ahlfors-regular measure, and let $G\acts Y$ be a measure-preserving action by Lipschitz homeomorphisms. The action $G\acts Y$ has spectral gap if and only if every
$\cO$-graph associated with
the warped cone $\cO_G Y$ is an expander.
\end{thm}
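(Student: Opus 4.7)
By \cref{prop:meSG}, spectral gap of $G \acts Y$ is equivalent to measure expansion, so I would replace the spectral hypothesis by measure expansion throughout. Since a uniform lower bound on the Cheeger constant is a uniform quasi-isometry invariant (up to adjusting constants), by \cref{def:wcgr} it suffices to prove the equivalence with measure expansion for one conveniently chosen $\cO$-graph $(\Gamma_t)_{t\in\N}$. I would construct this model by fixing, for each $t\in\N$, a maximal $(1/t)$-separated subset $V_t\subseteq Y$ and declaring $v,v'\in V_t$ adjacent in $\Gamma_t$ if $d_Y(v,v')\leqslant 3/t$ (``horizontal'' edges) or if $d_Y(s.v,v')\leqslant 1/t$ for some $s\in S$ (``vertical'' edges). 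Ahlfors regularity forces uniformly bounded vertex degrees, and the Lipschitz assumption together with \cref{defn:warpedcone} gives a uniform quasi-isometry between $\Gamma_t$ and the level $\{t\}\times Y$, so $(\Gamma_t)_t$ is an $\cO$-graph.

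For the forward direction, given $A\subseteq V_t$ with $1\leqslant|A|\leqslant|V_t|/2$, I would form $U_A=\bigcup_{v\in A}B_Y(v,1/t)$. By Ahlfors regularity with exponent $m$, we have $\mu(U_A)\asymp |A|\cdot t^{-m}$ with constants uniform in $t$, and since $|V_t|\asymp t^m$ the condition $|A|\leqslant|V_t|/2$ forces $\mu(U_A)\leqslant\tfrac{1}{2}\mu(Y)$ once $t$ is large enough (finitely many small $t$ can be absorbed by lowering the final expansion constant, as each connected finite graph has positive Cheeger constant). Measure expansion yields $\mu(SU_A\smallsetminus U_A)\geqslant c\mu(U_A)$, and the Lipschitz property places each point of $SU_A\smallsetminus U_A$ in the $O(1/t)$-neighbourhood of some vertex of $V_t\smallsetminus A$ that is adjacent to $A$ via a vertical edge. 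Converting back via Ahlfors regularity gives $|\partial A|\gtrsim c|A|$, whence $(\Gamma_t)_t$ is an expander.

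For the converse, I would argue by contrapositive. Assuming $G\acts Y$ is not measure expanding, extract a sequence $U_n\subseteq Y$ with $0<\mu(U_n)\leqslant\tfrac{1}{2}\mu(Y)$ and $\mu(SU_n\smallsetminus U_n)/\mu(U_n)\to 0$. For each $n$ I would choose a scale $t_n$ large enough that, via a Vitali-type covering and Ahlfors regularity, (i) the vertex set $A_n:=V_{t_n}\cap U_n$ satisfies $|A_n|/t_n^m$ within a constant factor of $\mu(U_n)$, and (ii) the $(C/t_n)$-neighbourhood of $\partial U_n$ has measure negligible compared with $\mu(SU_n\smallsetminus U_n)$, so that the combinatorial boundary $|\partial A_n|$ is controlled by $\mu(SU_n\smallsetminus U_n)\cdot t_n^m$ up to lower-order error. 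Then $|\partial A_n|/|A_n|\to 0$, contradicting the expander condition for $(\Gamma_t)_t$.

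The main obstacle, and the step I expect to require the most care, is the control of the boundary discretization error in item (ii) of the converse: for an arbitrary measurable $U_n$, the measure of the $\eps$-neighbourhood of $\partial U_n$ need not be $O(\eps)$, so a naive estimate fails. The standard remedy is to first replace $U_n$ by a mildly eroded subset, such as the union of all balls $B_Y(y,\eta)$ contained in $U_n$, whose approximation defect can be absorbed into $\mu(SU_n\smallsetminus U_n)$ thanks to the Lipschitzness of the generators. One then selects $t_n$ adaptively to both $U_n$ and the erosion parameter via a diagonal extraction, so that (i) and (ii) hold simultaneously while preserving the asymptotic non-expansion ratio.
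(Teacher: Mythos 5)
The paper does not give a direct argument here: its proof is a two-line citation to \cite{FV}*{Corollary~7.4} (for Riemannian $Y$, via the remark that bi-Lipschitz homeomorphisms are quasi-symmetric) and to \cite{super-exp}*{Theorem~1.1} for the general geodesic Ahlfors-regular case. Those references work at the level of the $L^2$ (Koopman/Markov) operator, comparing a discretised averaging operator on $\ell^2(V_t)$ to the continuous one; this sidesteps the set-discretisation issues your proposal runs into. Your plan is therefore a genuinely different, more combinatorial route.

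On the forward direction your estimate $\mu(U_A)\asymp |A|t^{-m}$ and $|V_t|\asymp t^m$ are fine, but they do not force $\mu(U_A)\leqslant \tfrac12\mu(Y)$ when $|A|\leqslant\tfrac12|V_t|$; the implicit Ahlfors constant $C$ lets $\mu(U_A)$ be as large as (roughly) $C^2\mu(Y)/2$. You need the extra step of arguing that $\mu(Y\smallsetminus U_A)\gtrsim \mu(Y)$ (from the disjoint balls around $V_t\smallsetminus A$) and then either extend measure expansion to $\mu(U)\leqslant(1-c_0)\mu(Y)$ by applying it to the complement, or pass to $Y\smallsetminus U_A$ directly. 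This is fixable but must be said.

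The converse is where the real gap lies. The erosion trick does not work as stated: for an arbitrary measurable $U_n$ the eroded set $U_n'=\bigcup\{B(y,\eta):B(y,\eta)\subseteq U_n\}$ can have much smaller measure than $U_n$ (it is empty if $U_n$ has empty interior, which can happen for sets of positive measure such as fat Cantor-type sets). The approximation defect $\mu(U_n\smallsetminus U_n')$ is therefore \emph{not} controlled by $\mu(SU_n\smallsetminus U_n)$ or by Lipschitzness of the generators. Relatedly, taking $A_n=V_{t_n}\cap U_n$ is meaningless before regularising, since $V_{t_n}$ has measure zero and may miss $U_n$ entirely. A correct repair replaces $U_n$ first by a finite union of balls $W_n$ with $\mu(U_n\triangle W_n)$ small enough that the almost-invariance ratio is preserved (this uses inner/outer regularity of the measure, not erosion), and only then discretises; even then the boundary-neighbourhood estimates via Ahlfors regularity need care. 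Alternatively, one can bypass all of this as the cited references do, by working with the Koopman operator and averaged indicator functions, where the discretisation step is an $L^2$-bounded averaging and no pointwise set approximation is needed.
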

\begin{proof}
If $Y$ is a Riemannian manifold, the result follows from \cite{FV}*{Corollary~7.4}, since every Lipschitz homeomorphism whose inverse is also Lipschitz is a quasi-symmetric homeomorphism. In general, the result is a special case, for $X=\R$, of a more general result \cite{super-exp}*{Theorem~1.1}.
\end{proof}

Consequently, the $\cO$-graphs associated with these warped cones are a source of expanders, which is in fact very rich, as exemplified by the results of \cite{FV,FNvL,deLaatVigolo,super-exp}. In particular, \cref{cor:origamiE} follows immediately from \cref{thm:F2} by applying \cref{VigolosTheorem}.

\begin{rem}\label{rem:qi}
Being an expander is a quasi-isometry invariant: If ${(\Gamma_n)}_n$ and ${(\Gamma'_n)}_n$ are two sequences of uniformly bounded degree graphs that are uniformly quasi-isometric, then ${(\Gamma_n)}_n$ is an expander if and only if ${(\Gamma'_n)}_n$ is. A proof of this folklore fact can be obtained by  a straightforward adaptation of the proof of \cite{K}*{Lemma 4.2}. Hence, the word ``every'' in  \cref{VigolosTheorem} can, equivalently, be replaced with ``some''.
\end{rem}

\section{Selberg-type expanders and Margulis-type expanders}

\subsection{Coarse disjoint unions and box spaces}

\begin{defn}[Coarse disjoint union] The disjoint union $\bigsqcup_n X_n$ of bounded metric spaces $(X_n,d_n)$  is called a \emph{coarse disjoint union} if it is equipped with a metric $d$ such that
\begin{itemize}
\item $d$ restricts to the metric $d_n$ on each $X_n$;
\item $d(X_n, X_m)\to\infty$ as $n+m\to \infty$ for all $n\not=m$.
\end{itemize}
\end{defn}

Any two such metrics yield coarsely equivalent metric spaces with underlying set~$\bigsqcup_n X_n$, or, in other words, the same coarse structure on~$\bigsqcup_n X_n$~\cite{roe:book}. Accordingly, one typically talks about \emph{the} coarse disjoint union.

Let $G$ be a group generated by a finite set $S$. We denote by $|g|_S$ the word length of an element $g\in G$ defined by $S$ and by $d_S(g,h)\coloneqq |g^{-1}h|_S$ the distance between elements $g,h\in G$ induced by this word length; $B(1_G,R)$ denotes the closed ball of radius $R\geqslant 0$ centered at the identity: $B(1_G,R) = \{g\in  G\mid |g|_S\leqslant R \}.$

\begin{defn}[Box space]\label{def:boxspace} Let ${(G_n)}_n$ be a sequence of finite-index normal subgroups of $G$.
We equip each $G/G_n$ with the quotient metric induced by the map $G\twoheadrightarrow G/G_n$ (equivalently, with the word-length metric defined by the image $S_n$ of $S$ under this quotient map).
We assume that
\begin{equation}\label{box-formula}
	\forall R\in \N\, \exists n_0\in \N \textrm{ such that } B(1_G, R) \cap \bigcup_{n\geqslant n_0} G_n = \{1_G\}.
\end{equation}
Then the coarse disjoint union $\bigsqcup_n \cay(G/G_n, S_n)$ is called a \emph{box space of $G$ defined by the sequence $\left( \cay(G/G_n, S_n) \right)_n$}.
\end{defn}

\begin{rem} \Cref{def:boxspace} is unusual and more general than the usual definition. If ${(G_n)}_n$ is nested and $\bigcap_nG_n=\{1_G\}$ (as in the standard definition of a box space often used in the literature), then $\bigsqcup_n \cay(G/G_n, S_n)$ is a box space in the sense of \cref{def:boxspace}. In fact, when ${(G_n)}_n$ is nested, then the condition $\bigcap_nG_n=\{1_G\}$ is equivalent to condition \eqref{box-formula}. However, if we only assume $\bigcap_nG_n=\{1_G\}$, then $\bigsqcup_n \cay(G/G_n, S_n)$ is not necessarily a box space according to our definition. Indeed, let $G = \Z \oplus \Z_2$, let $S\subseteq G$ be the standard generating set, let $G_1 = \Z \oplus \{0\}$, and let $G_n = n\Z \oplus \Z_2$ for $n\geqslant 2$. Then $\bigcap_{n\geqslant 1} G_n = \{1_G\}$, but for every $n_0\in \N$, the union $\bigcup_{n\geqslant n_0} G_n$ contains $\Z_2$, so for every $R\geqslant 2$ it intersects the ball $B(1_G,R)$ non-trivially.

The choice of this general definition of box space is inspired by \cref{def:faithful} and \cref{lem:generalised-box} below.\end{rem}

\begin{rem}\label{rem:single}
Despite the fact that an expander is defined as a \emph{sequence of graphs} and a box space is defined as a \emph{single metric space} consisting of such graphs, we often disregard the difference; cf.~\cite{AT:relative}*{Page 318}. 
There are historical reasons why these two points of view are used, and we choose to keep both languages.
\end{rem}

The following is a variation of the definition of asymptotically faithful sequence of graphs from~\cite{WY}*{Definition 2.2}.

\begin{defn}[Asymptotically faithful sequence of maps]\label{def:faithful}
Let $q_n\colon X_n \to Y_n$ be a sequence of surjective 1-Lipschitz maps between metric spaces. The sequence ${(q_n)}_n$ is called \emph{asymptotically faithful} if for every $R > 0$, there exists $N_R \in \N$ such that for every $n\geqslant N_R$, every $y\in Y_n$ and every $x\in q_n^{-1}(y)$, the map $q_n$ restricted to the map on the balls $B(x, R)\to B(y,R)$ is an isometry.
\end{defn}

\begin{lem}\label{lem:generalised-box} Let $G$ be a group generated by a finite set $S$, and let ${(G_n)}_n$ be a sequence of finite-index normal subgroups. The sequence of quotient maps $\cay(G,S)\twoheadrightarrow \cay(G/G_n,S_n)$ is asymptotically faithful if and only if $\bigsqcup_n \cay(G/G_n,S_n)$ is a box space.
\end{lem}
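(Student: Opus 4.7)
The plan is to translate both conditions into quantitative statements about which short elements of $G$ lie in $G_n$, using the fact that $q_n$ is a group homomorphism (by normality of $G_n$) and therefore left-equivariant in the appropriate sense. Since left multiplication is an isometry of each Cayley graph, the map $q_n \colon B(x,R) \to B(q_n(x),R)$ is an isometry for every $x \in G$ if and only if $q_n \colon B(1_G,R) \to B(1_{G/G_n},R)$ is; this reduces the problem to the single base point $1_G$ in each direction.

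For the ``if'' direction, fix $R > 0$ and choose $n_0$ from \eqref{box-formula} applied at radius $4R$, so that $G_n \cap B(1_G,4R) = \{1_G\}$ for every $n \geqslant n_0$. I would verify the three ingredients of an isometry on $B(1_G,R)$ as follows. Surjectivity onto $B(1_{G/G_n},R)$ is immediate: given any element of the target ball, write it as a word of length at most $R$ over $S_n$ and lift each letter to $S$ to obtain a preimage in $B(1_G,R)$. For injectivity, if $g,h \in B(1_G,R)$ satisfy $q_n(g) = q_n(h)$, then $g^{-1}h \in G_n \cap B(1_G,2R) \subseteq G_n \cap B(1_G,4R) = \{1_G\}$, hence $g = h$. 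For distance preservation, suppose $k := d_{S_n}(q_n(g),q_n(h)) < d_S(g,h)$ for some $g,h \in B(1_G,R)$; lift a geodesic of length $k$ in $\cay(G/G_n,S_n)$ from $q_n(g)$ to $q_n(h)$ to a path in $\cay(G,S)$ starting at $g$ and ending at some $h' \in G$ with $q_n(h') = q_n(h)$. Then $h^{-1}h' \in G_n \setminus \{1_G\}$ and the triangle inequality gives $|h^{-1}h'|_S \leqslant d_S(g,h) + k \leqslant 2R + (2R-1) < 4R$, contradicting the choice of $n_0$.

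For the ``only if'' direction, fix $R > 0$ and let $N_R$ be as in \cref{def:faithful}. For any $n \geqslant N_R$, any non-identity $z \in G_n \cap B(1_G,R)$ would yield two distinct points $1_G, z \in B(1_G,R)$ with the same image $1_{G/G_n}$ under $q_n$, contradicting the injectivity part of $q_n|_{B(1_G,R)}$ being an isometry; thus $G_n \cap B(1_G,R) = \{1_G\}$ for all $n \geqslant N_R$, which is precisely \eqref{box-formula}. I expect no serious obstacle: the argument is essentially bookkeeping of radii, and the only subtlety worth noting is that distance preservation (not merely injectivity) forces a slightly larger ball of $G_n$ to be trivial, which is why the forward direction uses radius $4R$ rather than $2R$, and why the two conditions are not pointwise identical at each radius but only equivalent once quantified over all $R$.
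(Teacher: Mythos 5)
Your proof is correct and follows essentially the same route as the paper: both reduce (implicitly or explicitly, via left-equivariance) to checking the isometry condition on balls about $1_G$, and both use the lifting argument to show that a failure of distance preservation on a ball of radius $R$ produces a non-trivial element of $G_n$ of length less than $4R$. The paper packages this by introducing the auxiliary sequence $R_n = \max\{R : B(1_G,R)\cap G_n = \{1_G\}\}$ and bounding the largest isometry scale between $R_n/4$ and $\lceil (R_n+1)/2\rceil$, but the underlying estimates are identical to yours.
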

\begin{proof}
For $n \in \N$, let $R_n$ be the maximal integer such that $B(1_G, R_n) \cap G_n = \{1_G \}$. Clearly, $\bigsqcup_n \cay(G/G_n,S_n)$ is a box space if and only if $\lim_{n\to\infty} R_n = \infty$.

The quotient map $q_n\colon \cay(G,S)\twoheadrightarrow \cay(G/G_n,S_n)$ is isometric at scale $r_n\coloneqq R_n/4$, i.e., $q_n$ is an isometry when restricted to the ball $B(1_G,r_n)$. Indeed, suppose for contradiction that there are $g,h\in B(1_G,r_n)$ such that $d_S(g,h) > d_{S_n}(q_n(g), q_n(h))$. This means that $|g^{-1}h|_S > |q_n(g^{-1}h)|_{S_n}$. Then there exists $k\in G$ such that $q_n(k) = q_n(g^{-1}h)$ and $|k|_S = |q_n(g^{-1}h)|_{S_n}$. Hence, $k^{-1}g^{-1}h$ is a non-trivial element of $\ker q_n = G_n$. However,
\[
	|k^{-1}g^{-1}h|_S\leqslant |k^{-1}|_S + |g^{-1}h| _S< 2|g^{-1}h|_S \leqslant 2(|g^{-1}|_S + |h|_S) \leqslant 4r_n = R_n,
\]
contradicting the assumption on the choice of $R_n$.

On the other hand, $q_n$ is not an isometry at scale $r'_n\coloneqq \lceil (R_n+1)/2 \rceil$. Indeed, by the maximality in the definition of $R_n$, there is $k \in G_n$ of length $R_n+1$. Then there are $g\in B(1_G, \lceil (R_n+1)/2 \rceil)$ and $h\in B(1_G, \lfloor (R_n+1)/2 \rfloor)$ such that $g^{-1}h = k$. Thus, $q_n(g) = q_n(h)$, and hence $q_n$ is not only non-isometric, but even non-injective on $B(1_G,r'_n)$.

Summing up, the largest scale $t_n$ at which $q_n$ is isometric belongs to the interval $[r_n,r'_n) = [R_n/4, \lceil (R_n+1)/2 \rceil)$, so $t_n$ converges to infinity if and only if $R_n$ does, which finishes the proof.
\end{proof}

\subsection{Selberg-type expanders}\label{sec:Stype}

One of the goals of this article is to distinguish several classes of expanders from the point of view of coarse geometry. First, we introduce the following new class.

\begin{defn}[Selberg-type expanders]\label{def:selberg}
An expander is called a \emph{Selberg-type expander} if it is the sequence of graphs defining a box space of a finitely generated discrete subgroup of $\PSL_2(\mathbb R)$ or  of $\SL_2(\mathbb R)$.\end{defn}

There are various known examples of Selberg-type expanders:
\begin{enumerate}
\item\label{ex:S} If $G\leqslant \SL_2(\mathbb Z)$ is a finite-index subgroup generated by a finite set $S$ and $G/G_n$ are congruence quotients for $n\in \mathbb N$, then $\left( \cay(G/G_n, S_n) \right)_n$ is an expander by Selberg's celebrated
$3/16$ theorem on the spectral gap of the Laplacian
on the hyperbolic surfaces $\mathbb H^2/G_n$ of finite
volume~\cite{selberg}
combined with the comparison principle of Brooks~\cite{Br} and Burger~\cite{Bu} that ensures the equivalence between continuous and discrete spectral gaps.

\item\label{Selberg2} Let $G\leqslant \SL_2(\mathbb Z)$ be a subgroup generated by a finite set $S$, and let $G/G_p$ be the congruence quotients for primes $p$. Then $\left( \cay(G/G_p, S_p) \right)_p$ is an expander if and only if $G$ is non-elementary~\cites{BG06,BG08}.

\item\label{Selberg3} Let $G\leqslant \SL_d(\mathbb Z)$ be a subgroup generated by a finite set $S$ that is Zariski dense in $\SL_d$, and let $G/G_n$ be the congruence quotients for $n\in \mathbb N$. Then $\left( \cay(G/G_n, S_n) \right)_n$ is an expander~\cite{BV}. For $d=2$, this result provides a great diversity of Selberg-type expanders. Note that \eqref{Selberg2} is a special case of this general result because a subgroup of $\SL_2(\mathbb Z)$ is Zariski dense in $\SL_2$  if and only if it is non-elementary.
We refer to \cites{BG08a,BGS10,V12} for different approaches to further generalisations of \eqref{Selberg2} which are special cases of \eqref{Selberg3}.

\item\label{SelbergRational}
Let $G\leqslant \SL_2(\mathbb Q)$ be a subgroup generated by a finite set $S$. Since there are only finitely many denominators involved in the entries of the elements of $S$,  the congruence quotients $G/G_q$ are defined for all $q$ avoiding finitely many primes as divisors. It follows from the main result of \cite{SV} that as $q\to\infty$ ranges over
such
square-free integers, the sequence
$\left( \cay(G/G_q, S_q) \right)_q$ is an expander, provided that the connected component of the identity in the Zariski closure of $G$ is perfect.

This last condition is satisfied by non-amenable subgroups $G$ as above, since it follows from \cite{vanderPutSinger2003}*{Theorem 4.29} that the Zariski closure of such a $G$ in $\SL_2(\C)$ is the whole of $\SL_2(\C)$.
\end{enumerate}

\begin{rem}
Selberg's celebrated result~\cite{selberg} on the congruence quotients of $\SL_2(\Z)$ naturally invites to generalisations. One way to do this is to consider congruence quotients of other groups, not only of $\SL_2(\Z)$ or of its subgroups. This leads to the following definition due to Lubotzky: An arithmetic lattice is said to have the \emph{Selberg property} if it has Property $(\tau)$ with respect to appropriately defined congruence subgroups~\cite{LubotzkyEigen}.

An alternative way of generalising Selberg's result is to allow a wider family of finite quotients, not only congruence quotients. Despite the fact that the Selberg-type expanders previously considered in the literature are often made of congruence quotients, it is this alternative way we pursue in \cref{def:selberg}. Therefore, it is all the more striking that the new expanders provided by \cref{cor:origamiE}  are all coarsely distinct, by \cref{differentFromSelberg}, from all expanders of Selberg type, even under our very broad interpretations of Selberg-type behaviour and of box spaces as given in Definitions \ref{def:selberg} and \ref{def:boxspace}, respectively.
\end{rem}

\subsection{Margulis-type expanders} \label{subsec:margulistype}

Let $G=\SL_2(\mathbb Z)\ltimes \mathbb Z^2$, where the semi-direct product is defined by the left multiplication action of $\SL_2(\mathbb Z)$ on $ \mathbb Z^2$.
The following four elements generate $G$:
\[T_1= \left(\begin{pmatrix} 1 & 0 \\ 0 & 1 \end{pmatrix},\begin{pmatrix} 1  \\ 0 \end{pmatrix}\right),\ T_2=\left(\begin{pmatrix} 1 & 0 \\ 0 & 1 \end{pmatrix},\begin{pmatrix} 0  \\ 1 \end{pmatrix}\right),\ T_3=\left(\begin{pmatrix} 1 & 1 \\ 0 & 1 \end{pmatrix}, \begin{pmatrix} 0  \\ 0 \end{pmatrix}\right),\ T_4=\left(\begin{pmatrix} 0 & -1 \\ 1 & 0 \end{pmatrix},\begin{pmatrix} 0  \\ 0 \end{pmatrix}\right).
\]
A natural action of $G$ on the discrete torus $\mathbb Z_n\times \mathbb Z_n$ is defined by:
\begin{eqnarray}\label{eq:semid}
\left(\begin{pmatrix} a & b \\ c & d \end{pmatrix},\begin{pmatrix} u  \\ v \end{pmatrix}\right). \begin{pmatrix} x  \\ y \end{pmatrix}\coloneqq \begin{pmatrix} ax+by \\ cx+ dy \end{pmatrix}+\begin{pmatrix} u  \\ v \end{pmatrix}.
\end{eqnarray}

\begin{defn}[The original Margulis expander]\label{def:origM} Let $V_n\coloneqq \mathbb Z_n\times \mathbb Z_n$ and let $V'_n$ be a disjoint copy of $V_n$.
The \emph{original} Margulis expander~\cite{Mar73}*{Section 3.2} is the sequence $(M_n)_{n}$ of 5-regular bipartite graphs $M_n$
with vertex set $V_n\sqcup V'_n$ and edge set defined by the transformations $1_G, T_1, T_2, T_3, T_4$ according to the action defined by~(\ref{eq:semid}): A vertex $\begin{pmatrix} x  \\ y \end{pmatrix}\in V_n$ is adjacent to $\begin{pmatrix} x  \\ y \end{pmatrix}, \begin{pmatrix} x+1  \\ y \end{pmatrix}, \begin{pmatrix} x  \\ y+1 \end{pmatrix}, \begin{pmatrix} x  +y\\ y \end{pmatrix}, \begin{pmatrix} -y  \\ x \end{pmatrix} \in V'_n$. There are
multi-edges
and the 5-regularity counts edges with multiplicity. See \cref{margulis graph}.
\end{defn}

\begin{figure}
	\includegraphics[scale=0.7]{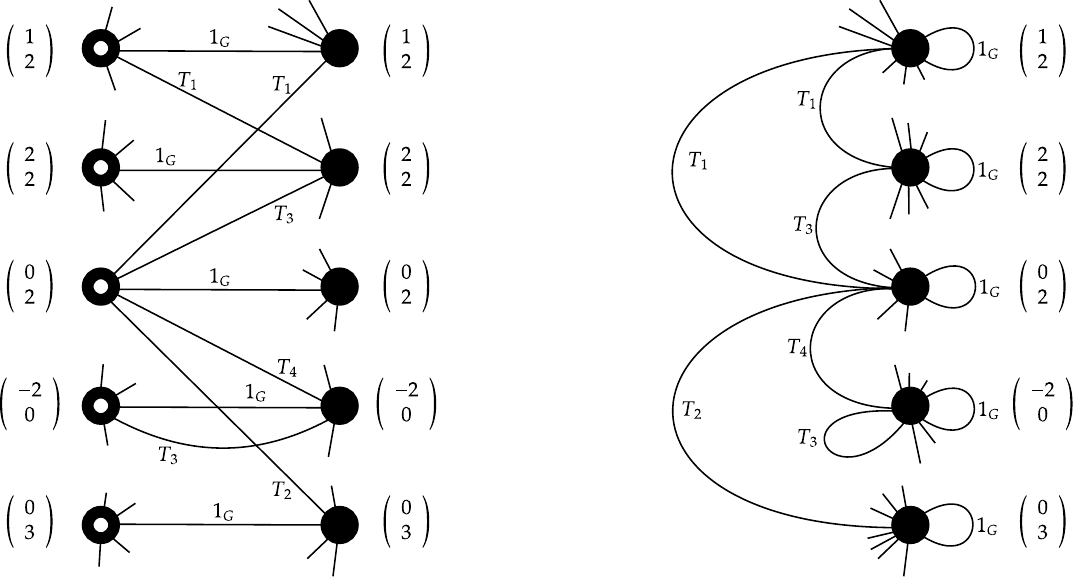}
	\caption{A local picture of the original Margulis expander $M_n$ (left) and the expander $M^\circ_n$ (right).}
		\label{margulis graph}
\end{figure}

Margulis shows that $(M_n)_{n}$ is a \emph{left} expander graph (in the natural bipartite sense of \cref{defn:expander}, where we require
$|\partial A|\geqslant (1+\varepsilon)|A|$ for every subset $A\subseteq V_n$ with $1\leqslant |A|\leqslant |V_n|/2$;
\cref{defn:expander}
also extends to graphs with loops and multiple edges). His proof also works to show the \emph{right} expansion of $(M_n)_{n}$ and the expansion of his other sequence $(\overline M_n)_{n}$ of 5-regular bipartite graphs~\cite{Mar73}*{Section 2.2}, defined as above but using the transformations $1_G, T_1, T_2, \overline T_3, T_4$ with $\overline T_3=T_4T_3^{-1}T_4^{-1}$. The Gabber--Galil expander~\cite{GG} is an analogous sequence $(L_n)_{n}$ of 5-regular bipartite graphs, defined by the transformations $1_G, T_3, \overline T_3, T_1T_3, T_2T_3$.  The next result is immediate from the definitions of these graphs and the fact that each of the sets $\{T_1, T_2, T_3, T_4\}$, $\{T_1, T_2, \overline T_3, T_4\}$, and $\{T_3, \overline T_3, T_1 T_3, T_2 T_3\}$ generates the entire semi-direct product $\SL_2(\mathbb Z)\ltimes \mathbb Z^2$.

\begin{lem}\label{lem:qiM1}
The Margulis expanders $(M_n)_{n}$ and $(\overline M_n)_{n}$ and the Gabber--Galil expander $(L_n)_{n}$ are uniformly quasi-isometric.
\end{lem}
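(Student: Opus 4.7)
The plan is to establish the stronger statement that the identity map on the common vertex set $V_n \sqcup V'_n$ is a bi-Lipschitz equivalence between any two of the three graphs, with Lipschitz constants uniform in $n$. Since a bijective bi-Lipschitz map is a quasi-isometry with additive constant $A=0$, this will yield the lemma. I will carry this out in detail for the pair $M_n, L_n$; the remaining pairs are treated by the same argument.

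Write $S_M = \{1_G, T_1, T_2, T_3, T_4\}$ and $S_L = \{1_G, T_3, \overline T_3, T_1 T_3, T_2 T_3\}$ for the generating sets defining $M_n$ and $L_n$, respectively. By the fact recalled just before the lemma, both sets generate $G = \SL_2(\Z) \ltimes \Z^2$. Since the sets are finite, there exists a constant $C \in \N$, depending only on $S_M$ and $S_L$ as subsets of $G$ (hence independent of $n$), such that every element of $S_M$ is equal in $G$ to some word of length at most $C$ in $S_L^{\pm 1}$, and vice versa. Given any edge of $M_n$, say between $v \in V_n$ and $T.v \in V'_n$ with $T \in S_M$, I will translate a short $S_L^{\pm 1}$-word expressing $T$ into a walk of bounded length in $L_n$ from $v \in V_n$ to $T.v \in V'_n$, thereby bounding $d_{L_n}(v, T.v)$ uniformly.

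The key subtlety, which I expect to be the main obstacle, is that all three graphs share the same bipartition $V_n \mid V'_n$, so a walk of length $k$ in $L_n$ starting in $V_n$ lands in $V_n$ if $k$ is even and in $V'_n$ if $k$ is odd. If the expression $T = g_1 \cdots g_k$ with $g_i \in S_L^{\pm 1}$ happens to have even length $k$, then the translated walk reaches the copy of $T.v$ in $V_n$ rather than in $V'_n$, i.e., the wrong vertex of $V_n \sqcup V'_n$. The decisive observation is that $1_G$ appears in \emph{all three} generating sets, so in each graph every $u \in V_n$ is joined by an edge to its copy $u \in V'_n$; hence a single additional $1_G$-step corrects any parity mismatch, yielding $d_{L_n}(v, T.v) \leqslant C+1$. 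Running the symmetric argument and repeating the same reasoning for the pairs $(M_n, \overline M_n)$ and $(\overline M_n, L_n)$, the identity map is $(C+1)$-Lipschitz in each direction between any two of the three graphs, and hence a uniform bi-Lipschitz equivalence, completing the proof.
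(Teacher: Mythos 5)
Your approach coincides with the paper's, which treats the lemma as immediate from the observation that the three generating sets all generate the full semi-direct product $G=\SL_2(\Z)\ltimes\Z^2$; you make this explicit by exhibiting the identity on $V_n\sqcup V'_n$ as a uniform bi-Lipschitz map, and you correctly identify the crucial role of the shared generator $1_G$.

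The quantitative step is, however, not quite right as written. In a bipartite Margulis-style graph, the word realised by a walk $v_0,\dots,v_k$ starting at $v_0\in V_n$ necessarily has the form $g_k^{\eps_k}\cdots g_1^{\eps_1}$ with $g_i\in S_L$ and with the signs $\eps_i=(-1)^{i+1}$ \emph{forced} to alternate by the bipartition, since generators are applied forward from $V_n$ to $V'_n$ and backward from $V'_n$ to $V_n$. An arbitrary $S_L^{\pm 1}$-word of length $k$ therefore does not translate into a walk of length $k$; the obstruction is not merely the side ($V_n$ versus $V'_n$) on which one lands, which a single extra $1_G$-step would indeed fix. The remedy is to invoke the $1_G$-edges more aggressively: applying a single generator $h\in S_L^{\pm 1}$ while returning to $V_n$ takes at most two edges (one of which is a $1_G$-edge inserted to enforce the required orientation), so a word of length $k\leqslant C$ yields a walk of length at most $2k+1$, giving a Lipschitz constant of $2C+1$ rather than $C+1$. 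Since any $n$-independent constant suffices, the conclusion stands.
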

By \cref{rem:qi}, this says in particular that the expansion of the original Margulis expander implies the expansion of the Gabber--Galil expander, and vice versa.

After choosing an arbitrary perfect matching $P_n$ in each graph $M_n$
and identifying $V_n$ with $V'_n$ along this perfect matching, we obtain a sequence $(M^\bullet_n)_n$ of 10-regular graphs $M^\bullet_n=M^\bullet_n(P_n)$
that is also an expander, as the boundaries of subsets in $M_n$ and in $M^\bullet_n$ are comparable: For a subset $A\subseteq V_n$ of the vertex set of $M^\bullet_n$ with $1\leqslant |A|\leqslant |V_n|/2$ we have
$|\partial A|\geqslant \varepsilon |A|$ whenever $|\partial X_A|\geqslant (1+\varepsilon)|X_A|$, where $X_A\subseteq V_n$ is a copy of $A$ in the vertex set of $M_n$. (If we removed the loops coming from $1_G$ from the graphs $(M^\bullet_n)_n$, or equivalently, if we collapsed the edges of the perfect matching chosen above, we would obtain a sequence of $8$-regular graphs.)

Alternatively, the Schreier graph for the action $\SL_2(\mathbb Z)\ltimes \mathbb Z^2\acts\mathbb Z_n\times \mathbb Z_n$ defined by~(\ref{eq:semid}), with respect to the generators $1_G, T_1, T_2, T_3, T_4$, yields
yet another sequence ${(M^\circ_n)}_n$ of 10-regular graphs that is an expander by Margulis's proof of relative property (T) for the pair $(\SL_2(\mathbb Z)\ltimes \mathbb Z^2, \mathbb Z^2)$~\cite{Mar73}*{Lemma 3.7}.

If $I_n$ is  the ``identity'' matching in $M_n$, i.e., a matching between each vertex $\begin{pmatrix} x  \\ y \end{pmatrix}\in V_n$ and its copy $\begin{pmatrix} x  \\ y \end{pmatrix}\in V'_n$, with a single
edge in $I_n$ for each such multi-edge in $M_n$, then $M^\bullet_n(I_n)$ coincides with $M^\circ_n$. See Figure \ref{margulis graph}.

\begin{lem}\label{lem:qiM2}
The original Margulis expander $(M_n)_{n}$ is uniformly quasi-isometric to the expander $(M^\circ_n)_n$ and the expanders $(M^\bullet_n(P_n))_n$ for arbitrary $P_n$.
\end{lem}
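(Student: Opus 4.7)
The plan is to construct, for each $n$ and each perfect matching $P_n\subseteq E(M_n)$, an explicit quasi-isometry $q_n\colon M_n\to M^\bullet_n(P_n)$ whose constants depend on neither $n$ nor $P_n$. Since the paper records $M^\circ_n = M^\bullet_n(I_n)$ for the identity matching $I_n$, a single uniform argument will handle both comparisons of the lemma simultaneously; the remaining comparison of two $M^\bullet_n(P_n)$'s for different matchings follows by composing quasi-isometries through $M_n$.

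The natural candidate for $q_n$ is the quotient map that collapses each matched pair $\{v,P_n(v)\}\in V_n\times V'_n$ to a single vertex of $M^\bullet_n(P_n)$. It is surjective with every fibre of size $2$, and, by the very definition of $M^\bullet_n(P_n)$, every edge of $M_n$ descends to an edge (possibly a loop) of $M^\bullet_n(P_n)$. Consequently $q_n$ is $1$-Lipschitz, giving the upper bound $d_{M^\bullet_n(P_n)}(q_n(u),q_n(v))\leqslant d_{M_n}(u,v)$.

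For the lower bound I would lift paths edge by edge. Given a geodesic $v_0,v_1,\dots,v_k$ in $M^\bullet_n(P_n)$ and a lift $\tilde v_0\in q_n^{-1}(v_0)$, I construct lifts $\tilde v_i\in q_n^{-1}(v_i)$ inductively. The step-edge $\{v_i,v_{i+1}\}$ descends from some bipartite edge $e$ of $M_n$. If $e$ is already incident to $\tilde v_i$, take $\tilde v_{i+1}$ to be its other endpoint, at a cost of one edge in $M_n$. Otherwise $e$ is incident to the $P_n$-partner $\tilde v_i'$ of $\tilde v_i$; but since $P_n\subseteq E(M_n)$, the matching edge $\{\tilde v_i,\tilde v_i'\}$ is itself an edge of $M_n$, so traversing it and then $e$ costs two edges. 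After $k$ steps one reaches a vertex of $q_n^{-1}(v_k)$ via a path of length at most $2k$ in $M_n$, and at most one further matching edge reaches any other preimage, yielding $d_{M_n}(u,v)\leqslant 2\,d_{M^\bullet_n(P_n)}(q_n(u),q_n(v))+1$.

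Combining these bounds shows $q_n$ is a surjective $(2,1)$-quasi-isometry with constants independent of both $n$ and $P_n$. Specialising to $P_n=I_n$ yields the uniform quasi-isometry of $(M_n)_n$ and $(M^\circ_n)_n$, and the general case yields the uniform quasi-isometry of $(M_n)_n$ and $(M^\bullet_n(P_n))_n$. The only mild subtlety, rather than a real obstacle, is to verify that the lifting rule is quantitatively insensitive to $P_n$; this is automatic because the sole property of $P_n$ used is that it is a perfect matching contained in $E(M_n)$, so matched pairs are always at distance $1$ in $M_n$.
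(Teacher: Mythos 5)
Your proposal is correct and takes essentially the same approach as the paper: reduce to the single claim that the quotient map collapsing a matching $P_n\subseteq E(M_n)$ is a uniform quasi-isometry (which simultaneously handles $M^\circ_n = M^\bullet_n(I_n)$), note that the quotient map is $1$-Lipschitz, and lift a geodesic in $M^\bullet_n(P_n)$ back to $M_n$, inserting a matching edge whenever the next edge is attached to the wrong member of a matched pair, giving $d_{M_n}(u,v)\leqslant 2\,d_{M^\bullet_n(P_n)}(q_n(u),q_n(v))+1$. The paper packages the lift slightly differently (constructing the full sequence $p_0$ with doubled vertices and then deleting repetitions, obtaining a $(2,1/2)$-quasi-isometry rather than your $(2,1)$), but the argument is the same in substance.
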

\begin{proof}
Since $M^\bullet_n(I_n)$ coincides with $M^\circ_n$, it suffices to show that $(M_n)_n$ is uniformly quasi-isometric with $(M^\bullet_n(P_n))_n$ for an arbitrary sequence of perfect matchings $(P_n)_n$. This follows immediately from the following claim.
\begin{cla*}\label{QIMatchings}
Let $\Gamma$ be a graph, $F\subseteq E(\Gamma)$ a matching, and $\Gamma'$ the quotient graph obtained by identifying vertices belonging to edges in $F$. Then the quotient map $q\colon (V(\Gamma), d_\Gamma)\to (V(\Gamma'), d_{\Gamma'})$, where $d_\Gamma$ and $d_{\Gamma'}$ denote the edge-length distances, is a $(2,1/2)$-quasi-isometry.
\end{cla*}
To prove the claim, first note that the quotient map~$q$ is surjective. Since it is a graph homomorphism, we have $d_{\Gamma'}(q(v), q(w)) \leqslant d_{\Gamma}(v,w)$ for every $v,w\in V(\Gamma)$.

To show the opposite estimate, let $v,w\in V(\Gamma)$ and let $q(v) = x_0', \ldots, x_n' = q(w)$ be a path in $\Gamma'$ realising the distance between $q(v)$ and $q(w)$, i.e.,\ $d_{\Gamma'}(q(v), q(w)) = n$. Since $q$ induces a surjective map between sets of edges, for every $i<n$, there are $x_i^2\in q^{-1}(x_i')$ and $x_{i+1}^1\in q^{-1}(x_{i+1}')$ such that $x_i^2$ and $x_{i+1}^1$ form an edge in $\Gamma$. Denote $x_0^1\defeq v$, $x_n^2 \defeq w$, and $p_0\defeq (x_0^1, x_0^2, x_1^1, x_1^2, x_2^1,\ldots, x_n^1, x_n^2)$.
By definition, $x_i^1, x_i^2 \in q^{-1}(x_i')$ for every $i\leqslant n$, so $x_i^1$ equals $x_i^2$ or they form an edge belonging to $F\subseteq E(\Gamma)$. Consider the sequence~$p$ obtained from~$p_0$ by omitting all vertices $x_i^2$ such that $x_i^1 = x_i^2$. Then $p$ is a path in $\Gamma$. Since its length as a sequence is at most $2n+2$, we get $d_\Gamma(v,w) \leqslant 2n+1 = 2d_{\Gamma'}(q(v), q(w)) + 1$. This finishes the proof of the claim, and hence of \cref{lem:qiM2}.
\end{proof}

The left multiplication action of $\SL_2(\mathbb Z)$ on $\mathbb R^2$ descends naturally to  an action by Lipschitz  homeomorphisms on the continuous $2$-torus $\T^2=\mathbb{R}^2 / \mathbb{Z}^2$; cf.~(\ref{eq:T2}). Let us consider the warped cone $\cO_{\SL_2(\mathbb Z)} \T^2$ over this action. The next result, together with the two preceding lemmata, shows that each of the expanders $(M_n)_{n}, (\overline M_n)_{n}, (L_n)_{n}, (M^\bullet_n(P_n))_n,$ and $(M^\circ_n)_n$ captures the dynamics of this action.

\begin{prop}\label{MargulisAsWarped}
The expander $(M^\circ_n)_n$ is uniformly quasi-isometric to the warped cone $\cO_{\SL_2(\mathbb Z)} \T^2$.
\end{prop}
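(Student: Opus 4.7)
The plan is to exhibit an explicit uniform quasi-isometry. Fix the symmetric generating set $S = \{T_3^{\pm 1}, T_4^{\pm 1}\}$ of $\SL_2(\Z)$ (classically a generating set) and use this $S$ to define the warped cone $\cO_{\SL_2(\Z)}\T^2$. Define the map
$\phi_n \colon V(M^\circ_n) \to \{n\} \times \T^2$ by $\phi_n(x,y) = (n, (x/n, y/n))$, using representatives $x, y \in \{0, \ldots, n-1\}$ and projecting modulo $\Z^2$. The image is a $\bigl(\sqrt{2}/(2n)\bigr)$-net in $\T^2$, so every point of $\{n\} \times \T^2$ lies within warped distance $\leqslant n \cdot \sqrt{2}/(2n) = \sqrt{2}/2 < 1$ of $\phi_n\bigl(V(M^\circ_n)\bigr)$, settling the coarse density condition with $A=1$.

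The upper Lipschitz bound is immediate: each edge of $M^\circ_n$ arises from one of the generators $T_1, T_2, T_3, T_4$. For $T_1, T_2$, the $\phi_n$-images differ by a translation of $\T^2$-length $1/n$, so their warped distance is at most $1$; for $T_3, T_4 \in S$, the images are related by a generator in $S$, again with warped distance $\leqslant 1$. Hence $\phi_n$ is $1$-Lipschitz.

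The lower Lipschitz bound is the main technical step. Assume $d_G(\phi_n(v), \phi_n(v')) \leqslant K$. Unwinding the definition of $d_G$ as the largest metric bounded by the two inequalities in \cref{defn:warpedcone}, any near-geodesic admits a zig-zag decomposition: an alternating sequence of continuous segments in $\T^2$ and generator applications in $S$, with cost at most $K+1$, comprising $k \leqslant K+1$ generator applications and continuous segments of total $d_{\T^2}$-length $\leqslant (K+1)/n$. I will lift this zig-zag to a walk in $M^\circ_n$ from $v$ to $v'$, tracking a lattice companion $q_i \in V(M^\circ_n)$ that stays within $\sqrt{2}/(2n)$ of the current continuous position. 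A continuous segment of $\T^2$-length $d$ is lifted using at most $\lceil 2nd\rceil + O(1)$ axis-parallel steps (using $T_1^{\pm 1}, T_2^{\pm 1}$), where the $O(1)$ absorbs the snapping error. A generator move by $s \in S$ is lifted by applying $s$ to $q_i$, which remains a lattice point since $s \in \SL_2(\Z)$ preserves $(1/n)\Z^2/\Z^2$; a bounded number of axis-parallel corrections then restores the snapping invariant. Summing, the walk has length $\leqslant C(K+1)$, which is the required lower Lipschitz bound.

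The main obstacle is precisely this bookkeeping: the snapping error must remain bounded at every step rather than compounding along the walk. This is ensured because each generator application inflates the snapping error by at most a factor of $\max_{s\in S}\|s\|$, which is finite since $S$ is finite, and the error is immediately reset by the $O(1)$ axis-parallel corrections before the next continuous segment begins. Thus the extra cost per generator step is $O(1)$, and the total lift remains linear in $K$, completing the uniform quasi-isometry.
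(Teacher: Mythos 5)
Your proposal is correct and uses the same quasi-isometry as the paper: the rescaling $(x,y) \mapsto (n,(x/n,y/n))$ of $\Z_n \times \Z_n$ into the $n$th level of $\cO_{\SL_2(\Z)}\T^2$. The difference is only in exposition: the paper invokes Corollary~2.3 of the reference [Saw18] (a general statement about warped cones over profinite approximations which specialises to precisely this map), whereas you unpack the content of that citation by hand, using Roe's zig-zag description of the warped metric (cf.\ [Roe05], Proposition~1.6) and lifting a near-geodesic to a lattice walk in the Schreier graph with a nearest-lattice-point companion and an $O(1)$ correction after each generator application.
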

\begin{proof}
The $n$th graph $M^\circ_n$ can be compared with the $n$th level $\{n\} \times \T^2$ via the obvious inclusion $(x,y)\mapsto (n, (\exp(2\pi i x/n), \exp(2\pi i y/n))$. Then the result follows, for example, from \cite{Scompletions}*{Corollary~2.3}.
\end{proof}

A variant of the above was also observed by other authors; see e.g.\ \cite{VigFund}*{Example 69}.

Thus,
even the oldest classical explicit constructions of expanders by Margulis and by Gabber--Galil are special cases of applying the warped cone construction. On the technical level, Margulis~\cite{Mar73} studies the action $\SL_2(\mathbb Z)\ltimes \mathbb Z^2\acts\Z^2$ and proves relative property (T), while
 to conclude that the warped cone $\cO_{\SL_2(\mathbb Z)} \T^2$ is uniformly quasi-isometric to an expander one needs that the action $\SL_2(\mathbb Z)\acts\T^2$ has spectral gap, which is indeed the approach of Gabber--Galil~\cite{GG}.

Inspired by this coarse geometric perspective, we introduce another new class of expanders.

\begin{defn}[Margulis-type expanders]\label{def:margulis}
An expander is called a \emph{Margulis-type expander} if it is uniformly quasi-isometric to the warped cone $\cO_G \Sigma$ over a measure-preserving action with a free orbit by Lipschitz homeomorphisms of a group $G$ on a closed surface $\Sigma$ of genus $g\geqslant 0$.
\end{defn}

In fact, the actions we  consider in what follows are, moreover, essentially free.

In genus $g=0$, examples of actions with spectral gap defining a Margulis-type expander are given by the action of the group $\F_2 = \langle a,b \rangle$ on the sphere $\Sp^2$, where $a$ and $b$ are two independent rotations represented by matrices in $\mathrm{SO}(3)$ with algebraic entries~\cites{drinfeld,gjs,BGSU2}.

A Margulis-type expander in genus $g=1$\label{classical-original} is called a \emph{classical Margulis-type expander}. This class encompasses both the original Margulis expander~\cite{Mar73} and the Gabber--Galil expander~\cite{GG}.
Moreover, this is potentially a huge class of expanders, since the group of area-preserving diffeomorphisms of the torus is an extremely rich object, studied intensively at least since the 1970s; see \cite{Banyaga}*{Chapter 5}.

For genus $g>1$, we use the terminology \emph{higher-genus Margulis-type expander}.

\begin{defn}[Origami expander] \label{def:origamiexpander}
	An \emph{origami expander} is an expander that is uniformly quasi-isometric to the warped cone $\cO_G \Sigma$ over an
	affine action with a free orbit of a group $G$ on an origami surface $\Sigma$.
\end{defn}

Margulis-type expanders and origami expanders are both $\cO$-graphs associated with warped cones over actions on surfaces. \Cref{def:margulis} requires the group to act by Lipschitz and measure-preserving homeomorphisms and \cref{def:origamiexpander} additionally requires the homeomorphisms to respect the affine structure of an origami surface. Hence, all origami expanders are Margulis-type expanders, and in particular the origami expanders from \cref{cor:origamiE} are  of Margulis type.

\section{Margulis-type expanders do not embed coarsely into Selberg-type expanders} \label{SelbergVSMargulis}

In this section, we show that any Margulis-type expander is coarsely non-equivalent to any Selberg-type expander. In fact, the former do not even admit coarse embeddings into the latter. We prove the following result, which immediately implies \cref{differentFromSelberg}.

\begin{thm} \label{differentiation}
There does not exist a coarse embedding from any Margulis-type expander into any Selberg-type expander.
\end{thm}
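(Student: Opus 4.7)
My plan is to argue by contradiction. By \cref{VigolosTheorem} and the definitions, it suffices to rule out the existence of a coarse embedding
$f \colon \cO_G \Sigma \to \bigsqcup_n \cay(\Gamma/\Gamma_n, S_n)$,
where $G \acts \Sigma$ is a measure-preserving Lipschitz action on a compact surface of genus $\geqslant 1$, and $\Gamma$ is a finitely generated discrete subgroup of $\PSL_2(\R)$ or $\SL_2(\R)$. The idea is to extract from $f$, by a rescaling-and-limit procedure, a coarse embedding $F \colon \R^2 \to \HHH^2$, and then to apply \cref{CEisQI}, which would force $F$ to be a quasi-isometry, contradicting the standard fact that $\R^2$ and $\HHH^2$ are not quasi-isometric (their growth types differ).

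The argument relies on two local-structure observations. First, on the source side, any compact surface is locally bi-Lipschitz equivalent to $\R^2$ away from at most finitely many singular points, so for every point $y\in \Sigma$ with free $G$-orbit (a full-measure set by \cref{cor:locallyFree}) and every $R>0$, the set $\{t\}\times B_\Sigma(y, R/t)$ is contained in the $d_G$-ball of radius $R$ around $(t,y)$ and, for $t$ large, is bi-Lipschitz equivalent to a flat $R$-disc in $\R^2$ with constants independent of $R$. Second, on the target side, an expander cannot be a box space of an amenable group, so $\Gamma$ must be non-elementary; such finitely generated non-elementary Fuchsian groups satisfy $\cay(\Gamma, S)\hookrightarrow \HHH^2$ coarsely via the orbit map. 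Moreover, by \cref{lem:generalised-box}, the quotient maps $\cay(\Gamma,S)\twoheadrightarrow \cay(\Gamma/\Gamma_n, S_n)$ are asymptotically faithful, so at every fixed scale $r$ the balls of radius $r$ in $\cay(\Gamma/\Gamma_n, S_n)$ are isometric to those in $\cay(\Gamma, S)$ for $n$ large enough.

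Combining these, for each $R\in \N$, I would pick $(t_R, y_R)$ with $y_R$ of free orbit and $t_R$ so large that: (i) the set $\{t_R\}\times B_\Sigma(y_R, R/t_R)$ is uniformly bi-Lipschitz to $B_{\R^2}(0,R)$; and (ii) for the index $n = n(t_R)$ with $f((t_R, y_R))\in \cay(\Gamma/\Gamma_n,S_n)$, the ball in $\cay(\Gamma/\Gamma_n,S_n)$ of radius controlled by the coarse-embedding data of $f$ is isometric to the corresponding ball in $\cay(\Gamma, S)$. Composing the bi-Lipschitz identification, the restriction of $f$, and the coarse embedding $\cay(\Gamma,S)\hookrightarrow \HHH^2$ then produces, for each $R$, a coarse embedding $B_{\R^2}(0,R) \to \HHH^2$ whose control functions are independent of $R$. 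A standard ultralimit (or diagonal compactness) argument then produces a global coarse embedding $F\colon \R^2\to \HHH^2$. By \cref{CEisQI}, $F$ is a quasi-isometry, which is the desired contradiction.

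The main obstacle is the clean extraction of local Euclidean patches. A ball of $d_G$-radius $R$ in $\cO_G \Sigma$ contains, besides the purely local part $\{t\}\times B_\Sigma(y, R/t)$, many further Euclidean patches near the orbit points $g.y$ for $|g|_S\leqslant R$, glued by the $G$-action into a combinatorially complicated object. The argument has to discard these and work only with the purely local part, while simultaneously ensuring that the $f$-images of the $R$-balls fall inside a single Cayley graph to which asymptotic faithfulness can be applied. Both points reduce to choosing $t_R$ and $n(t_R)$ large with respect to the control functions of $f$, but the bookkeeping of these quantifications is the technical heart of the proof, and it is the place where the Lipschitz regularity of the $G$-action on $\Sigma$ is essential.
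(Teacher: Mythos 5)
Your proposal is correct and takes essentially the same route as the paper, which deduces \cref{differentiation} from \cref{MargulisVsSurface} via the uniform Euclidean patches of \cref{hohoho} (corresponding to your source-side observation) and asymptotic faithfulness (\cref{lem:generalised-box}), concluding with the same rescaling/ultralimit argument that yields a forbidden coarse embedding $\R^2 \to \HHH^2$. The only cosmetic differences are that you verify the source-side local structure directly rather than citing \cref{hohoho}, and that you invoke \cref{CEisQI} where the paper invokes its corollary \cref{specialRH}.
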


That the above statement holds for the original Margulis expander, one can argue as follows:
From~\cite{SWstraightening}*{Example~3.4}, it follows that the original Margulis expander \cite{Mar73} (see \cref{def:origM}) does not admit a fibred coarse embedding into a Hilbert space. We refer to \cite{CWY} for the definition of fibred coarse embedding. In contrast, the Selberg expander $(\cay (\SL_2(\Z / n\Z), S_n))_{n}$ admits such an embedding by \cite{CWW}*{Theorem~2.3} and \cref{lem:generalised-box} above, as it can be turned into a box space in the sense of \cref{def:boxspace} of a group with the Haagerup property. In the same way, since countable subgroups of $\PSL_2(\R)$ and  $\SL_2(\R)$ satisfy the Haagerup property~\cite{GHW}, all Selberg-type expanders admit a fibred coarse embedding into a Hilbert space. Therefore, the original Margulis expander does not admit a coarse embedding into any Selberg-type expander.

To prove \cref{differentiation} for arbitrary Margulis-type expanders, we apply a different strategy. We show that Margulis-type expanders have piecewise asymptotic dimension at least 2 (see \cref{piecewiseDef}), while the Selberg expander $(\cay (\SL_2(\Z / n\Z), S_n))_{n}$, and similarly, all expanders from items (\ref{ex:S}), (\ref{Selberg2}), and (\ref{Selberg3}) in \cref{sec:Stype} have piecewise asymptotic dimension 1~\cite{Spiecewise}*{Lemmata~6.5 and~6.6}.
 This is not true for some other Selberg-type expanders, in particular those obtained from the construction in item (\ref{SelbergRational}) of \cref{sec:Stype} applied to surface groups of genus larger than $1$  embedded into $\SL_2(\mathbb Q)$, as produced by Takeuchi~\cite{Takeuchi1971}.
 To deal with examples of this kind, we observed in \cref{lem:generalised-box} that such expanders locally resemble their mother group, i.e.\ the surface group. Since every Margulis-type expander coarsely contains arbitrarily large Euclidean 2-balls, \cref{differentiation} follows from the fact that the Euclidean plane does not coarsely embed into the hyperbolic plane (see~\cref{specialRH}), even though both have asymptotic dimension~2.

\smallskip

We now recall the notion of piecewise asymptotic dimension, which is an instance of piecewise versions of metric properties introduced in \cite{Spiecewise} in the case of asymptotic dimension; see e.g.\ \cites{NY, roe:book} for the definition of asymptotic dimension. A condition related to \cref{piecewiseDef} for $m=1$ is implicit in \cite{yamauchi}.

\begin{defn}[Piecewise asymptotic dimension] \label{piecewiseDef}
A sequence $(X_t)_{t}$ of metric spaces  has \emph{piecewise asymptotic dimension at most $m\in \N$} if for every $R\in \N$, there is $F(R)<\infty$ such that the family $\bigcup_R X_{F(R)+}^R$ of metric spaces has asymptotic dimension at most $m$, where for $S, F>0$, we denote
\[
	X_{F+}^S \defeq \{ A : A \subseteq X_t,\, t > F,\ \text{and } \diam A \leqslant S\}.
\]
We say that the \emph{piecewise asymptotic dimension of $(X_t)_t$ is infinite} if no such $m$ exists, and otherwise the smallest such integer $m$ is called the \emph{piecewise asymptotic dimension of~$(X_t)_t$}.
\end{defn}

\begin{prop}\label{hohoho} Let $\cO_{G}\Sigma$ be the warped cone over an action by Lipschitz homeomorphisms of a group $G$ on a Riemannian  surface $\Sigma$ with finitely many singularities, and assume that $\Sigma$ contains a free orbit disjoint from the singular set. Then there exist two non-decreasing functions $\eta,\rho \colon [0,\infty) \to [0,\infty)$ with $\lim_{r \to \infty}\eta(r)=\infty$ such that for every $R < \infty$ and for $t$ sufficiently large (depending on $R$), there is a $(\eta,\rho)$-coarse embedding of the ball $B(0,R)\subseteq \R^2$ into $\{t\}\times \Sigma \subseteq \cO_{G} \Sigma$.
\end{prop}
\begin{proof}
Let $x \in \Sigma$ be a point in
the
free orbit, and let $r$ be the injectivity radius at~$x$. Fix $0 < r'< r$. Then there is $L>1$ such that the exponential map $\R^2\supseteq B(0,r') \to B(x,r')\subseteq \Sigma$ and its inverse are Lipschitz with Lipschitz constant $L$.

Let $R < \infty$. Let $U \subseteq B(x,r')$ be a compact neighbourhood of $x$ such that $U \cap g U = \emptyset$ for every $g \in G \smallsetminus \{1\}$ with $|g|\leqslant 2R$.
The existence of such a neighbourhood follows from the Hausdorff property. Indeed, for every $g \neq 1$ with $|g| \leqslant 2R$, there are disjoint neighbourhoods $V_g$ of $x$ and $W_g$ of $gx$, so $\widetilde{V}_g \coloneqq  V_g \cap g^{-1}W_g$ satisfies $\widetilde{V}_g \cap g \widetilde{V}_g = \emptyset$. By intersecting the neighbourhoods $\widetilde{V}_g$ over $g$, we obtain a neighbourhood with the desired property, and we can further shrink it to guarantee that it is compact and contained in $B(x,r')$.

Denote $c \coloneqq  \min_{0< |g| \leqslant 2R} d(U, g U) > 0$. Now, let $t>0$ be sufficiently large that
\begin{itemize}
\item the set $U$ contains the ball $B(x,R/t)$, and
\item $ct > 2R$.
\end{itemize}
Let $U_t = \{t\} \times B(x, R/t)$ and consider the function $D_G \colon U_t\times U_t\to [0,\infty)$ given by
\begin{equation}\label{skewMetric}
	D_G (y,y') = \inf_{g \in G} \big( |g| + d(gy, y') \big),
\end{equation}
where $d$ denotes the metric on $\Sigma$ scaled by $t$.
We claim that $D_G$ equals the metric $d$. Indeed, first note that by considering $g=1_G$ in \eqref{skewMetric} we obtain $D_G(y,y') \leqslant |1_G| + d(1_G y, y') = d(y,y')\leqslant 2R$. Hence, $g \in G$ with $|g| > 2R$ can be neglected in \eqref{skewMetric}. For the opposite inequality, note that if $|g| \leqslant 2R$, then $|g| + d(g y, y') \geqslant |g| + ct > 2R > d(y,y')$ unless $g = 1_G$.

It follows from \cite{Scompletions}*{Proposition 2.1} that, if one extends $D_G$ to the metric on $\cO_G \Sigma$ given by the same formula \eqref{skewMetric}, then the identity map $(\cO_G \Sigma,d_G) \to (\cO_G \Sigma,D_G)$ is a coarse equivalence with control functions depending only on the Lipschitz constants of the generators of $G$. Since $(U_t,D_G) = (U_t, d)$ is bi-Lipschitz equivalent to the $R$-ball in $\R^2$ (again with the constants independent of $R$ and $F$), this finishes the proof.
\end{proof}

For an isometric action $G \curvearrowright \Sigma$, a more refined version of \cref{hohoho} was established in \cite{deLaatVigolo}*{Lemma 18}. For a free action, see \cite{SWstraightening}*{Proposition 3.10}.

\begin{cor} \label{cor:PWasdim}
Let $G$ and $\Sigma$ be as in \cref{hohoho}. Then, every unbounded family of levels of $\cO_{G}\Sigma$ has piecewise asymptotic dimension $\geqslant 2$. In particular, it does not admit a coarse embedding into any box space of a virtually free group.
\end{cor}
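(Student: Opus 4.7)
The plan is to extract from \cref{hohoho} uniform coarse copies of arbitrarily large Euclidean $2$-balls sitting inside any ``deep'' portion of an unbounded family of levels of $\cO_G\Sigma$, and then to rule out asymptotic dimension $\leqslant 1$ by pulling back a putative uniform cover, which would otherwise force $\asdim \R^2 \leqslant 1$.

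First, we would fix an unbounded family of levels $(X_{t_n})_n$ and an arbitrary function $F\colon \N\to \N$. Given $R_0>0$, set $S := \lceil\rho(2R_0)\rceil$ where $(\eta,\rho)$ are the universal control functions supplied by \cref{hohoho}, and invoke \cref{hohoho} at a level $t_n$ larger than both $F(S)$ and the level-threshold in \cref{hohoho} associated with $R_0$; such an $n$ exists because $(t_n)_n$ is unbounded. This yields an $(\eta,\rho)$-coarse embedding $f\colon B_{\R^2}(0,R_0)\to \{t_n\}\times \Sigma$ whose image has diameter $\leqslant S$, so that $f(B_{\R^2}(0,R_0))\in X_{F(S)+}^S\subseteq \bigcup_R X_{F(R)+}^R$. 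Consequently, $\bigcup_R X_{F(R)+}^R$ contains coarse images of arbitrarily large $\R^2$-balls, all with the same control functions $(\eta,\rho)$.

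Next, to establish the piecewise asdim bound, we would argue by contradiction. If $\bigcup_R X_{F(R)+}^R$ had asymptotic dimension at most $1$, then for every $R''\geqslant 0$, setting $R':=\lceil \rho(R'')\rceil$ would furnish a constant $S'=S'(R')$ such that every member of the family admits an $S'$-bounded cover of $R'$-multiplicity $\leqslant 2$. Pulling such a cover back through the embedding $f$ above produces a cover of $B_{\R^2}(0,R_0)$ by subsets of diameter $\leqslant \eta^{-1}(S')$, which is finite because $\eta$ is proper, with $R''$-multiplicity $\leqslant 2$: indeed, $d_{\R^2}(y,z)\leqslant R''$ implies $d(f(y),f(z))\leqslant \rho(R'')\leqslant R'$. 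Since the bound $\eta^{-1}(S')$ does not depend on $R_0$, a standard limit argument (using that $\R^2$ is an increasing union of the balls $B_{\R^2}(0,R_0)$) would yield $\asdim \R^2 \leqslant 1$, contradicting $\asdim \R^2=2$. As $F$ was arbitrary, this shows that the piecewise asymptotic dimension of $(X_{t_n})_n$ is at least $2$.

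For the final assertion, we would invoke \cite{Spiecewise}*{Lemmata~6.5 and~6.6}: every box space of a finitely generated group $H$ has piecewise asymptotic dimension at most $\asdim H$, which equals $1$ whenever $H$ is virtually free. Since piecewise asymptotic dimension is monotone under coarse embeddings (an embedding with controls $(\eta,\rho)$ allows pull-back of the defining covers, exactly as in the previous paragraph), no unbounded subfamily of levels of $\cO_G\Sigma$ can coarsely embed into such a box space. The only delicate step is the compatibility of coarse (rather than quasi-isometric) embeddings with pull-backs of uniformly bounded covers; this works because the properness of $\eta$ is precisely what keeps $\eta^{-1}(S')$ finite.
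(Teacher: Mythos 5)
Your argument is correct and follows the same route as the paper's proof: \cref{hohoho} supplies uniform coarse embeddings of arbitrarily large Euclidean $2$-balls into sufficiently deep levels, so for every $F$ the family $\bigcup_R X_{F(R)+}^R$ coarsely contains the uniform family of disks (which has asymptotic dimension $2$), and piecewise asymptotic dimension is non-decreasing under coarse embeddings and equals $1$ for box spaces of virtually free groups. The only difference is that you carry out the pull-back-of-covers and limit arguments by hand, where the paper delegates exactly these standard steps to \cite{Spiecewise}*{Lemma~6.3, Proposition~6.8, Theorem~6.9}.
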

\begin{proof}
Clearly, $k = 2$ is the smallest value of $k$ such that the family of disks $\{B(0,r)\}_{r\in \N}$ in $\R^2$ has asymptotic dimension at most $k$ uniformly. Indeed, the upper bound is obvious and the lower bound follows from a limit argument; see e.g.\ \cite{Spiecewise}*{Proposition~6.8}.

It follows from \cref{hohoho} that for any choice of $F(R)<\infty$, the family $\bigcup_R X_{F(R)+}^R$ from \cref{piecewiseDef} for $X = \cO_{G} \Sigma$ contains the family of disks $\{B(0,r)\}_{r\in \N}$ coarsely, and hence the piecewise asymptotic dimension of $\cO_{G} \Sigma$ is at least the asymptotic dimension of $\{B(0,r)\}_{r\in \N}$, namely~$2$.

On the other hand, every infinite virtually free group $H$ has asymptotic dimension~$1$. Hence, by \cite{Spiecewise}*{Theorem~6.9}, every box space of $H$ has piecewise asymptotic dimension~$1$. As piecewise asymptotic dimension is non-decreasing under coarse embeddings~\cite{Spiecewise}*{Lemma~6.3}, the result follows.
\end{proof}

\begin{cor}\label{large-girth}  Let $(\Lambda_n)_n$ be an arbitrary sequence of graphs with ${\rm girth}\, \Lambda_n\to \infty$ as $n\to \infty$.  Let $G$ and $\Sigma$ be as in \cref{hohoho}.  Then there does not exist a coarse embedding of any unbounded family of levels of $\cO_{G}\Sigma$ into $(\Lambda_n)_n$. In particular, an $\cO$-graph associated with $\cO_{G}\Sigma$ is never a
large-girth expander.
\end{cor}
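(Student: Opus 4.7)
The plan is to mirror the proof of \cref{cor:PWasdim}, exchanging virtual freeness of the mother group for the large-girth hypothesis. By \cref{cor:PWasdim}, every unbounded family of levels of $\cO_G \Sigma$ has piecewise asymptotic dimension at least $2$, and by \cite{Spiecewise}*{Lemma 6.3}, piecewise asymptotic dimension is non-decreasing under coarse embeddings. Hence it suffices to show that any sequence $(\Lambda_n)_n$ of graphs with $\mathrm{girth}(\Lambda_n) \to \infty$ has piecewise asymptotic dimension at most $1$.

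For this central step, given $R \in \N$, I would pick $F(R) \in \N$ such that $\mathrm{girth}(\Lambda_n) > 2R + 1$ for every $n > F(R)$. Then any subset $A \subseteq \Lambda_n$ with $\diam A \leqslant R$ and $n > F(R)$ is contained in a closed ball of radius $R$ in $\Lambda_n$, and this ball is isometric to a ball of radius $R$ in the universal cover of $\Lambda_n$, that is, in a simplicial tree. Thus every element of the family $\bigcup_R X_{F(R)+}^R$ appearing in \cref{piecewiseDef} embeds isometrically into a simplicial tree. To finish, I would invoke the following uniform asymptotic dimension statement for trees: for any $r > 0$ and any simplicial tree $T$ with basepoint $v$, partitioning $T$ via the connected components of the shells $\{x \in T : d(x,v) \in [kr, (k+1)r)\}$ for $k \in \N$ yields a cover by subtrees of diameter at most $2r$, organised into two $r$-disjoint families (even $k$ and odd $k$). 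This cover is independent of the branching of $T$ and passes to arbitrary subspaces, so $\bigcup_R X_{F(R)+}^R$ indeed has asymptotic dimension at most $1$, uniformly.

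The ``in particular'' assertion is then immediate: by \cref{def:wcgr}, an $\cO$-graph $(\Gamma_n^\cO)_n$ associated with $\cO_G \Sigma$ is uniformly quasi-isometric to $\cO_G \Sigma$; if it were a large-girth expander, restricting this quasi-isometry to an unbounded family of levels would yield a coarse embedding of that family into a sequence with girth tending to infinity, contradicting the first assertion.

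The main technical point I expect to require care is the uniformity of the asymptotic dimension bound for $\bigcup_R X_{F(R)+}^R$, since this family contains subspaces of unboundedly large diameter and the graphs $\Lambda_n$ may have unbounded vertex degrees. However, the shell cover above depends only on the chosen scale $r$ and not on the underlying tree, so the uniformity survives in full generality, and no further hypothesis on $(\Lambda_n)_n$ is needed.
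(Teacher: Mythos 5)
Your overall plan matches the paper's: get the lower bound $\geqslant 2$ on the piecewise asymptotic dimension of the levels of $\cO_G\Sigma$ from \cref{cor:PWasdim}, apply the monotonicity of piecewise asymptotic dimension under coarse embeddings, and combine with the fact that large-girth sequences have piecewise asymptotic dimension $1$. The paper disposes of the last fact by citing \cite{yamauchi}*{Lemma 3.2}; you try to reprove it directly, and this is where two technical gaps occur.

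First, a quantitative point: requiring $\mathrm{girth}\,\Lambda_n > 2R+1$ makes the ball $B(v,R)$ a tree as an \emph{induced subgraph}, but this is not the same as the ball equipped with the subspace metric from $\Lambda_n$ being a tree metric, which is what piecewise asymptotic dimension sees. In a cycle of length $3R$, the ball of radius $R$ around a vertex $v$ has the two endpoints of the arc at ambient distance $R$, not $2R$, so the ball is not isometric to a tree ball. For a subset $A$ of diameter $\leqslant R$, ambient geodesics between its points stay within $B(a,2R)$ for any $a\in A$, so one needs $\mathrm{girth}\,\Lambda_n$ on the order of $4R+1$ (or a bit more, if one insists on lifting balls rather than just geodesics) to guarantee that $A$ with its subspace metric embeds isometrically in a tree. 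This is a harmless off-by-a-constant that is easily fixed.

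Second, and more seriously, the shell decomposition of a tree you describe does not produce $r$-disjoint families. Two distinct connected components of the \emph{same} shell $\{x : d(x,v)\in [kr,(k+1)r)\}$ can be at distance $2$: in any tree with a vertex $z$ at depth $kr-1$ having two children $x,y$ at depth $kr$ (e.g.\ the infinite binary tree with $r > 2$), the points $x$ and $y$ lie in different components of $S_k$, yet $d(x,y)=2 < r$. The even/odd split only separates different shells, not different components within the same shell, so the claimed $r$-disjointness fails. The fact you want is still true --- trees have asymptotic dimension at most $1$, uniformly and independently of degree --- but the standard argument goes through the multiplicity/Lebesgue-number characterisation: take overlapping shells of width $3r$ with overlap $r$, observe that components of these shells cover $T$ with multiplicity $\leqslant 2$, diameter $\leqslant 6r$, and Lebesgue number $\geqslant r/2$, and then convert. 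Alternatively, simply cite \cite{yamauchi}*{Lemma 3.2} as the paper does. As written, the proposal's central step is not a proof, though the gaps are all repairable and the route taken is the same as the paper's.
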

\begin{proof}
By \cite{yamauchi}*{Lemma 3.2}, every sequence of large-girth graphs has piecewise asymptotic dimension $1$, since balls of a fixed radius are eventually trees. The conclusion then follows by  \cref{cor:PWasdim}.
\end{proof}

\Cref{cor:PWasdim} proves \cref{differentiation} for those Selberg-type expanders that come from virtually free discrete subgroups of $\PSL_2(\R)$ or $\SL_2(\R)$. As mentioned before, using the following theorem, we can prove the same result for all Selberg-type expanders.

\begin{thm}\label{specialRH} The Euclidean plane $\R^2$ does not admit a coarse embedding into the hyperbolic plane $\HHH^2$.
\end{thm}

The key step in proving the theorem lies in showing that a putative coarse embedding $\R^2 \to \HHH^2$ would be coarsely surjective; this is sufficient, since for geodesic spaces coarsely surjective coarse embeddings are quasi-isometries. To prove coarse surjectivity one can adapt the proof of \cite{EskinFarb1997}*{Lemma 8.2} (where the underlying idea is credited to Mess) or the proof of \cite{KL}*{Theorem 3.8}.
\cref{specialRH} is also implied by \cite{DKbook}*{Theorem 9.69 and Corollary 9.70} and \cite{Li}*{Corollary 4.49}.

\begin{cor}\label{MargulisVsSurface} Let $G$ and $\Sigma$ be as in \cref{hohoho}, and $\cO_{G}\Sigma$ be the corresponding warped cone. An unbounded family of levels of $\cO_{G}\Sigma$ does not admit a coarse embedding into any box space of a finitely generated discrete subgroup of $\PSL_2(\mathbb R)$ or $\SL_2(\mathbb R)$.
\end{cor}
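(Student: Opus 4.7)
The argument would split according to the nature of the given finitely generated discrete subgroup $\Lambda\leqslant \PSL_2(\R)$ or $\SL_2(\R)$. Every finitely generated Fuchsian group is either virtually free (covering all elementary groups as well as all non-cocompact ones, by Poincaré's signature theorem) or cocompact. The virtually free case is already handled by \cref{cor:PWasdim}, so I would concentrate on the cocompact case, in which the Švarc--Milnor lemma gives that $\cay(\Lambda,S)$ is quasi-isometric to the hyperbolic plane $\HHH^2$.

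Assume for contradiction that there is a uniform coarse embedding $(f_t)_t$ of an unbounded family of levels $\{t\}\times \Sigma$ into the box space $B=\bigsqcup_n\cay(\Lambda/\Lambda_n,S_n)$, with common control functions $(\eta_f,\rho_f)$. For each $R>0$, choose $t=t(R)$ large enough that \cref{hohoho} provides a $(\eta,\rho)$-coarse embedding $\iota_R\colon B(0,R)\subseteq \R^2 \to \{t(R)\}\times \Sigma$. The composition $\varphi_R\defeq f_{t(R)}\circ \iota_R \colon B(0,R)\to B$ has image of diameter at most $\rho_f(\rho(2R))$, while the lower bound $\eta_f\circ \eta$ forces this image to sit inside a single component $\cay(\Lambda/\Lambda_{n(R)},S_{n(R)})$ of intrinsic diameter tending to infinity with $R$; hence $n(R)\to \infty$. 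By \cref{lem:generalised-box}, asymptotic faithfulness ensures that for $R$ fixed and $n(R)$ sufficiently large, the map $\cay(\Lambda,S)\twoheadrightarrow \cay(\Lambda/\Lambda_{n(R)},S_{n(R)})$ is an isometry on balls large enough to contain the image of $\varphi_R$; the image therefore lifts isometrically into $\cay(\Lambda,S)$. Post-composing with the Švarc--Milnor quasi-isometry $\cay(\Lambda,S)\to \HHH^2$ yields, for every $R$, a coarse embedding $g_R\colon B(0,R)\to \HHH^2$ with control functions $(\eta',\rho')$ \emph{independent of $R$}.

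I would then normalise by applying isometries of $\HHH^2$ so that $g_R(0)$ equals a fixed basepoint $\ast$ for every $R$. Properness of $\HHH^2$ guarantees that for each $x\in \R^2$ the sequence $(g_R(x))_R$ eventually lies in the compact ball of radius $\rho'(\|x\|)$ around $\ast$. A diagonal argument over a countable dense subset $D\subseteq \R^2$ extracts a subsequence converging pointwise on $D$ to a map $g\colon D \to \HHH^2$ satisfying $\eta'(d(x,y))\leqslant d(g(x),g(y))\leqslant \rho'(d(x,y))$. Precomposing $g$ with any nearest-point selection $\R^2\to D$ extends it to a coarse embedding $\R^2 \to \HHH^2$ with the same control functions up to constants. \Cref{CEisQI} would force this map to be a quasi-isometry, contradicting the incompatibility between the polynomial growth of $\R^2$ and the exponential growth of $\HHH^2$.

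The main difficulty is the passage to a limit in the last paragraph together with the isometric lifting step that precedes it: one must ensure that the controls $(\eta',\rho')$ are genuinely uniform in $R$ (in particular that both $n(R)\to\infty$ and asymptotic faithfulness can be applied simultaneously), and that these bounds are inherited by the subsequential pointwise limit and by its extension to $\R^2$. The use of \cref{CEisQI} reduces the geometric heart of the contradiction to an independent, already-proved coarse rigidity statement, so once the limit argument is set up correctly, the obstruction to a coarse embedding is completely transferred to the clean mismatch of growth rates.
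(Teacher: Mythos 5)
Your overall strategy---composing the coarse embeddings of Euclidean balls from \cref{hohoho} with the putative embedding into the box space, lifting through a sufficiently faithful quotient, passing to a limit to produce a coarse embedding $\R^2\to\HHH^2$, and deriving a contradiction with \cref{specialRH}---is the same as the paper's. Two points of divergence are worth recording. First, you case-split on whether $\Lambda$ is virtually free (handled via \cref{cor:PWasdim}) or cocompact (handled via \v{S}varc--Milnor); the paper avoids the split by observing that \emph{every} finitely generated discrete subgroup of $\PSL_2(\R)$ or $\SL_2(\R)$ admits a coarse embedding into $\HHH^2$, namely the orbit map of its properly discontinuous action on $\HHH^2$, so that a single argument covers both cases. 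Your split is correct but slightly longer than necessary. Second---and this is the subtlety you rightly flag as ``the main difficulty''---as written, your lifting step is not quite justified: you choose $t(R)$ only so that $\iota_R$ exists, and you then need the quotient $\cay(\Lambda,S)\twoheadrightarrow \cay(\Lambda/\Lambda_{n(R)},S_{n(R)})$ to be isometric at scale $\rho_f(\rho(2R))$, a quantity that \emph{grows with $R$}, whereas $n(R)$ is determined by your chosen $t(R)$ and your diameter argument only gives $n(R)\to\infty$ without any rate. The paper decouples the two parameters: it fixes the Euclidean radius $R_i$ first, uses that \cref{hohoho} supplies a \emph{whole family} $\iota_i^j$ for all levels $j$ beyond a threshold, and then picks $j$ so large that the component $f_j$ lands in is above the faithfulness threshold for the now-\emph{fixed} scale $\rho_f(\rho(2R_i))$. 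Your sketch is repaired the same way (or equivalently by restricting $\iota_R$ to a fixed $B(0,R_0)$ before letting $R\to\infty$). With that fix in place, your diagonal pointwise limit is a perfectly acceptable replacement for the paper's ultrafilter limit.
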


\begin{proof} Assume that a sequence of levels $(\{t_i\}\times \Sigma)_i$ admits a coarse embedding into a box space $\bigsqcup_i H/H_i$ for some finitely generated group $H$. By \cref{hohoho}, we have the following objects: a sequence $R_i\geqslant 0$ with $\lim_{i\to\infty} R_i = \infty$; functions $\eta$ and $\rho$ as in \cref{def:ce}; $(\eta,\rho)$-coarse embeddings $\iota_i^j \colon B(0,R_i)\to \{t_j\}\times \Sigma$ for all $i\in \N$ and $j\geqslant i$, where $B(0,R_i)$ denotes the disk of radius $R_i$.

Now, we can use the following standard argument from \cite{KV}*{Proposition 16} to obtain a coarse embedding $\R^2 \to H$: Since the sequence of maps $H\to H/H_i$ is asymptotically faithful (\cref{lem:generalised-box}), for every $i$ we can pick $j$ sufficiently large for the image of the composition
\[ B(0,R_i) \xrightarrow{\iota_i^{j}} \{t_{j}\}\times \Sigma \to H/H_{j}\]
to be contained in a ball of a sufficiently small radius $r$ such that $H\to H/H_{j}$ is isometric at the scale~$r$. We then compose the map further with the inverse of a restriction of the quotient map $H\to H/H_{j}$. This yields a coarse embedding $\iota_{R_i}\colon B(0,R_i)\to H$, and
may easily arrange to have $\iota_{R_i}(0)=1_H$. By taking a limit along an ultrafilter, we get a coarse embedding $\iota\colon \R^2\to H$.

Now, suppose that $H$ is a discrete subgroup of $\PSL_2(\mathbb R)$ or $\SL_2(\mathbb R)$. In particular, $H$ admits a coarse embedding into $\HHH^2$ obtained by choosing an arbitrary point $b$ in $\HHH^2$ and mapping $h \mapsto h.b$, where the action is inherited from the isometric action of $\PSL_2(\mathbb R)$ on $\HHH^2$. Consequently, there can be no coarse embedding of a sequence of levels $(\{t_i\}\times \Sigma)_i$ into $(H/H_i)_i$ as it would yield a coarse embedding $\R^2\to H\to \HHH^2$, contradicting \cref{specialRH}.
\end{proof}

\begin{proof}[Proof of \cref{differentiation}]
By definition, every Selberg-type expander can be turned into a box space of a finitely generated discrete subgroup of $\PSL_2(\mathbb R)$ or $\SL_2(\mathbb R)$. Hence, the claim follows from \cref{MargulisVsSurface} because the actions that we consider satisfy its assumptions by \cref{cor:locallyFree}.
\end{proof}

\section{Staircase expanders are not coarsely equivalent to box spaces}\label{StaircaseNotQIBox}

In \cref{SelbergVSMargulis}, we have shown that Margulis-type expanders do not admit a coarse embedding into  Selberg-type expanders. In particular, a Margulis-type expander cannot be coarsely equivalent (equivalently, quasi-isometric) to a Selberg-type expander. In this section, we strenghten this latter statement from Selberg-type expanders to arbitrary box spaces in the case of staircases. We do so by proving that the \emph{discrete fundamental groups} of levels of the associated warped cones are trivial. For box spaces (also according to our more general \cref{def:boxspace}), discrete fundamental groups are infinite \cite{DK}, which excludes the existence of a quasi-isometry between our warped cones and any box space of any infinite group.

Furthermore, a rich source of actions with spectral gap are translation actions of subgroups of compact Lie groups on the ambient compact Lie group. The spectral gap may simply be a consequence of Property (T) of the subgroup, but it can also occur rather generically for free subgroups \cites{BGSU2,BG2,BdS}. For subgroup actions, the discrete fundamental group of a level of the warped cone is infinite, as it contains the acting group or a group that surjects onto it \cites{VigFund,FNvL}. Hence, our warped cones are also not coarsely equivalent to any warped cone over an action from this large class.

The only examples of expanders with trivial discrete fundamental groups that were known before come from spectral-gap actions by rotations on even-dimensional spheres \cite{VigFund}. A novel feature of our examples is that the triviality of the discrete fundamental group occurs for warped cones over actions on manifolds whose fundamental group is infinite and even non-amenable.

\smallskip

We now recall the definition of the discrete fundamental group from \cite{BCW}; see also \cite{DK}. For $r > 0$, a sequence $(x_0, x_1, \ldots, x_n)$ of points in a metric space $(X,d)$ is called an \emph{$r$-path} if $d(x_i, x_{i+1})\leqslant r$ for $i=0,\ldots, n-1$. Fix a basepoint $x_{*} \in X$. An $r$-path $(x_0,x_1,\ldots,x_n)$ with $x_{*}=x_0=x_n$ is called an \emph{$r$-loop} based at $x_{*}$. We identify an $r$-loop $(x_0, \ldots, x_n)$ based at $x_{*}$ with its trivial extensions $(x_{*},x_0, \ldots,x_n)$ and $(x_0,\ldots,x_n,x_{*})$. Furthermore, we identify two $r$-loops $(x_0, \ldots, x_n)$ and $(y_0, \ldots, y_n)$ (of the same length) based at $x_{*}$ if they are $r$-close, namely $d(x_i,y_i)\leqslant r$ for all $i=1,\ldots,n-1$. The resulting equivalence relation on $r$-loops based at $x_{*}$ is called an \emph{$r$-homotopy}. Two $r$-loops $x = (x_0, \ldots, x_n)$ and $y = (y_0, \ldots, y_m)$ based at $x_{*}$ can be concatenated, yielding the $r$-loop $(x_0, \ldots, x_n, y_1, \ldots, y_m)$.
\begin{defn}[Discrete fundamental group]
The set of equivalence classes of $r$-loops based at $x_{*}$ under $r$-homotopy becomes a group under concatenation, which is called the \emph{discrete fundamental group} of $X$ at scale $r$ based at $x_{*}$ and denoted by $\pi_{1,r}(X,x_{*})$.
\end{defn}

A metric space $X$ is called \emph{$1$-geodesic} if for every $x,y\in X$, there is a $1$-path $(x_0, \ldots, x_n)$ with $x_0 = x$ and $x_n=y$ such that $d(x,y) = \sum_{i=0}^{n-1}d(x_i, x_{i+1})$. The usual metric on a connected graph is obviously 1-geodesic, and similarly, the warped metric on a level of a warped cone over a geodesic space is 1-geodesic by \cite{roe}*{Proposition~1.6}. For $r>1$, the map $\pi_{1,1}(X,x_{*}) \to \pi_{1,r}(X,x_{*})$ induced by the identity map on $X$ is surjective whenever $X$ is 1-geodesic, and hence, $\pi_{1,r}(X,x_{*}) \to \pi_{1,r'}(X,x_{*})$ is surjective whenever $1\leqslant r < r'$. Thus, increasing the scale $r$ amounts to adding new relations rather than new generators to the discrete fundamental group. A similar surjectivity result holds for maps of discrete fundamental groups induced by quasi-isometries:
\begin{lem}[\cite{VigFund}*{Lemma 21.(ii)}]\label{surjectivityOfQI}
Let $X,Y$ be $1$-geodesic spaces and $x_{*} \in X$. If $f\colon X\to Y$ is a quasi-isometry, then there exist constants $L,A\geqslant 0$ such that the induced map $f_{*}\colon\pi_{1,r}(X,x_{*})\to \pi_{1,r'}(Y,f(x_{*}))$ is surjective whenever $r\geqslant L+A$ and $r'\geqslant Lr+A$.
\end{lem}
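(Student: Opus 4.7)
The plan is to construct, for every $r'$-loop $\alpha$ in $Y$ based at $f(x_*)$, an explicit preimage under $f_*$ in $\pi_{1,r}(X, x_*)$, using a quasi-inverse $g$ of $f$ together with the $1$-geodesicity of both $X$ and $Y$ to refine coarse paths into admissible discrete loops.

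First, I would fix $(L_0, A_0)$-quasi-isometry constants for $f$, choose a quasi-inverse $g\colon Y \to X$ (whose quasi-isometry constants depend only on $L_0, A_0$), and arrange that $d_X(g \circ f(x), x) \leqslant A_0$ and $d_Y(f \circ g(y), y) \leqslant A_0$ for all $x \in X$ and $y \in Y$. All thresholds below will be expressed in terms of $L_0$ and $A_0$. Given an $r'$-loop $\alpha = (y_0, y_1, \ldots, y_n)$ with $y_0 = y_n = f(x_*)$, I apply $g$ pointwise to obtain the sequence $(g(y_0), \ldots, g(y_n))$ in $X$ with consecutive spacing at most $L_0 r' + A_0$. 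Using the $1$-geodesicity of $X$, I refine each pair $g(y_i), g(y_{i+1})$ into a $1$-path in $X$. To base the resulting path at $x_*$ I prepend a $1$-path from $x_*$ to $g(f(x_*))$ (which lies within $A_0$ of $x_*$) and append its reverse. The concatenation $\beta$ is a $1$-loop in $X$ based at $x_*$, hence an $r$-loop for every $r \geqslant 1$.

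Applying $f$ pointwise to $\beta$ yields a sequence in $Y$ whose consecutive points are at distance at most $L_0 + A_0$, so $f(\beta)$ is an $r'$-loop based at $f(x_*)$ as soon as $r' \geqslant L_0 + A_0$, representing $f_*([\beta])$. The crucial step is to exhibit an $r'$-homotopy between $f(\beta)$ and $\alpha$. By construction, $f(\beta)$ passes through ``milestones'' $f(g(y_i))$ which lie within $A_0$ of $y_i$, separated by subpaths whose images form sequences of points at pairwise distance at most $L_0 r' + A_0$. Using the combinatorial moves available in the discrete fundamental group (trivial extensions at the basepoint, concatenation, and identification of equal-length $r'$-close loops), together with $1$-geodesicity in $Y$ to produce ladder-type homotopies, each such subpath can be deformed to the direct edge $y_i \to y_{i+1}$ of $\alpha$. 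This is possible provided $r'$ dominates the largest step size arising in any intermediate loop, and a direct book-keeping shows that $r' \geqslant L_0 r + A$ together with $r \geqslant L + A$ (for $L, A$ suitable polynomial combinations of $L_0$ and $A_0$) suffices.

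The main obstacle will be making this ``straight-line'' ladder homotopy rigorous in the purely combinatorial setting of discrete loops: each elementary identification step requires comparing two equal-length loops whose corresponding vertices are within distance $r'$, while also keeping the basepoint fixed throughout the deformation. The ladder must be built so that its vertical rungs remain $r'$-short at every level, which is where the joint hypotheses $r \geqslant L + A$ (ensuring $r$ is large enough for the refinements in $X$ to fit) and $r' \geqslant Lr + A$ (ensuring the $f$-images still form $r'$-admissible loops and that all intermediate closeness relations use distance at most $r'$) come together. Bundling $L_0, A_0$ with the additive slacks arising from the basepoint adjustment and the refinement step yields the announced constants $L, A$.
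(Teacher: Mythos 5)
Your plan has the right ingredients (quasi-inverse, $1$-geodesic refinement, basepoint correction), but the order of operations is wrong, and the "ladder-type homotopy" you invoke is not constructed and need not exist. Since $\alpha$ is an $r'$-loop, the points $g(y_i)$ and $g(y_{i+1})$ can be at $X$-distance up to $L_0 r' + A_0$. Refining in $X$ therefore introduces points $z$ on $1$-geodesics that lie at $X$-distance of order $L_0 r'$ from $g(y_i)$, and their images $f(z)$ can sit at $Y$-distance of order $L_0^2 r'$ from the nearest milestone $f(g(y_i))$, hence from $y_i$ and $y_{i+1}$. Once $L_0 > 1$ (and $r'$ is moderately large) this far exceeds $r'$, so no padding of $\alpha$ is entrywise $r'$-close to $f(\beta)$, and a single ``identification of $r'$-close loops'' is unavailable. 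To bridge the gap you would need to move each $f(z)$ over a distance of order $L_0^2 r'$ while keeping every intermediate sequence an $r'$-loop; in a general $1$-geodesic $Y$, geodesics emanating from nearby points can diverge, so the rungs of such a ladder do not stay short. More fundamentally, $Y$ carries no simple-connectivity hypothesis: the closed loop formed by the $f$-image of the $X$-geodesic segment together with the reversed edge $y_{i+1} \to y_i$ may represent a nontrivial class of $\pi_{1,r'}(Y)$, in which case $[\beta]$ is simply not a preimage of $[\alpha]$. Put differently, your argument only identifies $f_*[\beta]$ with $[\alpha]$ inside $\pi_{1,L_0^2 r'}(Y)$, and the natural map $\pi_{1,r'}(Y) \to \pi_{1,L_0^2 r'}(Y)$ can have nontrivial kernel.

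The fix is to refine \emph{in $Y$ first}: replace $\alpha$ by a $1$-refinement $\hat\alpha$, which is $r'$-homotopic to $\alpha$ because each inserted point lies on a $1$-geodesic between consecutive $y_i$ (so is within $r'$ of $y_i$) and repeated points are absorbed via trivial extensions. Then set $\beta := g(\hat\alpha)$, with a short basepoint correction joining $x_*$ to $g(f(x_*))$, and do \emph{not} refine again in $X$. Now $\beta$ has step size at most $L_0 + A_0$, so it is an $r$-loop once $r \geqslant L_0 + A_0$; and since $f \circ g$ is $A_0$-close to $\mathrm{id}_Y$, every entry of $f(\beta)$ lies within $A_0$ (or $L_0 A_0 + A_0$ along the basepoint correction) of the corresponding entry of a padding of $\hat\alpha$. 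Thus $f(\beta) \sim \hat\alpha \sim \alpha$ follows by one entrywise $r'$-close identification, with no collapsing of long segments; the constraints $r \geqslant L_0 + A_0$ and $r' \geqslant L_0 r + A_0$ give the stated form with $L = L_0$ and $A = (L_0 + 1) A_0$.
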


Given $r>0$ and a continuous loop $\omega \colon \mathbb S^1 \to X$ based at $x_{*} \in X$, uniform continuity provides $n\in \N$ such that for all $\theta, \theta'\in [0,1]$ with $|\theta-\theta'|\leqslant \frac 1 {n}$, we have $d(\omega(\theta), \omega(\theta'))\leqslant r$. Thus, the sequence $(\omega(\frac i {n}))_{i=0}^{n}$ is an $r$-loop based at $x_{*}$. It is straightforward to verify that the map $\omega \mapsto (\omega(\frac i {n}))_{i=0}^{n}$ induces a homomorphism from the fundamental group $\pi_1(X, x_{*})$ to $\pi_{1,r}(X,x_{*})$; cf.~\cite{BCW}.

\begin{lem}\label{everythingComesFromTopology}
Let $G \acts Y$ be an action of a finitely generated group $G$ by homeomorphisms on a geodesic compact metric space $(Y,d)$, and let $r \geqslant 1$. If the action has a global fixed point~$y$, then the homomorphism $p\colon \pi_1(X, x_{*})\to \pi_{1,r}(X,x_{*})$ is surjective, where $X=(\{1\}\times Y,d_G)$ (corresponding to $t=1$ in \cref{defn:warpedcone}).
\end{lem}

\Cref{everythingComesFromTopology} can be deduced from \cite{VigFund}*{Theorem 42}, but we provide a direct proof which does not require the entire framework of \cite{VigFund}.

\begin{proof}[Proof of \cref{everythingComesFromTopology}]
Consider an $r$-loop $\omega = (x_0, \ldots, x_n)$ based at $x_{*}$. By \cite{roe}*{Proposition~1.6}, for every $i=0, \ldots, n-1$, there is a sequence $(x_i^j)_{j=0}^{n_i}$ with $x_i^0=x_i$ and $x_i^{n_i} = x_{i+1}$ such that
\begin{equation}\label{infimumAttainedRoe}
d_G(x_i, x_{i+1}) = \sum_{j=0}^{n_i-1} d_G(x_i^j, x_i^{j+1}),
\end{equation}
and furthermore, for every $j$, the distance $d_G(x_i^j, x_i^{j+1})$ equals $d(x_i^j, x_i^{j+1})$ or there is $g_i^j\in G$ such that  $d_G(x_i^j, x_i^{j+1})=|g_i^j|$ and $x_i^{j+1} = g_i^j x_i^{j}$.

By induction, the $r$-loop $\omega$ is $r$-homotopic to the sequence consisting of the same points $x_i$, but where each $x_i$ is repeated $n_i$ times. The supremum distance between the latter sequence and the sequence
\[
	\omega'=(x_0^0,\ldots, x_0^{n_0-1}, x_1^0,\ldots, x_1^{n_1-1}, \ldots, x_{n-1}^0,\ldots, x_{n-1}^{n_{n-1}-1}, x_n)
\]
is bounded by $r$ by \eqref{infimumAttainedRoe}, and hence $\omega$ and $\omega'$ are $r$-homotopic. We re-index $\omega'$ as $(y_0, \ldots, y_m)$. By construction, for $k=0,\ldots,m-1$, we have that $d_G(y_k, y_{k+1})$ equals $d(y_k, y_{k+1})$ or there is $g_k\in G$ such that $d_G(y_k, y_{k+1})=|g_k|$ and $y_{k+1} = g_k y_k$. If~$d_G(y_k,y_{k+1})=d(y_k,y_{k+1})$ for all $k=0,\ldots,m-1$, the statement of the lemma follows, since then $\omega'$ is an $r$-loop with respect to the metric $d$.

Otherwise, fix $k$ such that $d_G(y_k, y_{k+1})=|g_k|$ and $y_{k+1} = g_k y_k$. Since $(Y,d)$ is geodesic, there is an $r$-path $(y_k^0,\ldots, y_k^{m_k})$ (with respect to the metric $d$) with $y_k^0 = y_k$ and  $y_k^{m_k} = y$ (where $y$ is the global fixed point), and we can further require that $(g_k y_k^0,\ldots, g_k y_k^{m_k})$ is also an $r$-path with respect to $d$. Note that the sequence consisting of $y_k$ repeated $m_k$ times followed by $y_{k+1}$ repeated $m_k$ times is within the supremum distance $r$ from the sequence consisting of $y_k = y_k^0$, followed by $y_k^1$ repeated $m_k-1$ times, followed by $g_k y_k^1$ repeated $m_k-1$ times, and followed by $y_{k+1} =  g_k y_k^0$. Inductively, we can reach the $r$-path $(y_k^0, \ldots, y_k^{m_k}, g_k y_k^{m_k}, \ldots g_k y_k^{0})$. This path has the property that every two consecutive entries are within distance~$r$ with respect to~$d$; this uses the fact that $y_k^{m_k} = g_k y_k^{m_k}$.

By considering all $k$ as above, we see that $\omega'$ is $r$-homotopic to a sequence $\omega'' = (z_l)_{l=0}^N$ that is an $r$-loop with respect to the metric $d$. Since $Y$ is geodesic, there is a $1$-Lipschitz map $\alpha\colon [0,Nr]\to Y$
such that $\alpha(lr) = z_l$ for $l=0,\ldots,N$. Hence, the loop $\beta\colon \mathbb S^1\to X$ given by $\beta(\theta)= \alpha(\theta Nr)$ has the property that $p([\beta]) = [\omega''] = [\omega]$, which finishes the proof.
\end{proof}

\begin{rem}\label{noGlobalFixedPoint}
It is clear from the above proof that instead of a global fixed point, it suffices to assume that every element of $G$ of length at most $r$ has a fixed point, and by replacing it with a sequence of generators, one sees that it suffices that every generator has a fixed point. We formulated \cref{everythingComesFromTopology} in the more restrictive setting because all our actions have a global fixed point.
\end{rem}

Let $v \in \rho^{-1}\big((0,0)\big)$ for $\rho \colon Z_g \to \T^2$. This point is actually unique.

\begin{thm} \label{thm:ultimatestaircasetheorem}
Let $Z_g$ be an arbitrary staircase; see \cref{staircaseDef}. For every $t \in \N$ and $r \geqslant 1$, the discrete fundamental group $\pi_{1,r}((\{t\} \times Z_g,d_{\F_2}),v)$ is trivial. In particular:
\begin{enumerate}[(i)]
 \item \label{in_particular} an $\cO$-graph associated with the warped cone $\cO_{\F_2}Z_g$ is  coarsely non-equivalent  to any box space of a finitely generated group;
 \item \label{furthermore} such an $\cO$-graph is not coarsely equivalent to any $\cO$-graph associated with any warped cone $\cO_G M$ over a free action of an infinite group $G$ on a manifold $M$.
\end{enumerate}
\end{thm}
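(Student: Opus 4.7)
The plan is to reduce the problem to a vanishing statement for topological $\pi_1$ and then to exhibit explicit $r$-contractions using the affine action. Both generators $a_2, b_2 \in \SL_2(\Z)$ fix the origin of $\T^2$, so they fix the unique preimage $v = \rho^{-1}((0,0))$ in $Z_g$. Consequently the action $\F_2 \curvearrowright Z_g$ has $v$ as a global fixed point, and the argument of \cref{everythingComesFromTopology}, which only relies on geodesicity of the underlying space, applies at every level $t\in\N$ (since $(Z_g, t\cdot d)$ is still geodesic). The comparison map $p \colon \pi_1(Z_g, v) \to \pi_{1,r}((\{t\}\times Z_g, d_{\F_2}), v)$ is therefore surjective for every $r \geqslant 1$, so it is enough to show $p$ is the zero map.

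With respect to the CW-decomposition of $Z_g$ whose unique $0$-cell is $v$, whose $1$-cells are the horizontal and vertical edges of the squares $P_i$, and whose $2$-cells are the square interiors, $\pi_1(Z_g, v)$ is generated by \emph{cycle loops}: one horizontal loop $\alpha_j$ per cycle of $\sigma$ and one vertical loop $\beta_l$ per cycle of $\tau$. Since every cycle of $\sigma$ or $\tau$ in a staircase has length $L \in \{1, 2\}$, each $\alpha_j$ consists of at most two horizontal unit edges at $y = 0$ concatenated through $v$. The affine action $a_2 \colon (x, y) \mapsto (x + 2y, y)$ is the identity on the line $y = 0$ and, at $y = 1$, is translation by $2$; since $2$ is a multiple of $L$ in each case, $a_2$ fixes every horizontal edge of $Z_g$ \emph{pointwise}. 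Combined with the universal bound $d_{\F_2}(p, a_2 \cdot p) \leqslant 1$, this pointwise fixity is the key to $r$-contracting $\alpha_j$: one considers the strip $h_i \colon [0, L] \times [0, 1] \to Z_g$, which descends to a cylinder in $Z_g$ because $h_i(L, y) = h_i(0, y)$, and fills this cylinder by an $r$-fine grid that is collapsed row by row via $r$-homotopies. The two integer-height rows collapse using only action edges, which at those heights are degenerate (both endpoints coincide in $Z_g$); intermediate rows are handled by fine geometric sampling. Swapping the roles of $a_2$ and $b_2$ treats the vertical generators $\beta_l$ symmetrically.

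Both consequences then follow from the rigidity of discrete fundamental groups under quasi-isometries (\cref{surjectivityOfQI}, applicable because $\cO$-graphs and box spaces/warped cones are quasi-geodesic, so coarse equivalences are quasi-isometries by \cref{rem:cquasi}). For (i), a coarse equivalence to a box space $B$ would produce a surjection from the trivial group onto $\pi_{1, r'}(B, b_0)$, which is infinite at large scales by Delabie--Khukhro~\cite{DK}. For (ii), an analogous surjection onto the discrete fundamental group of $\cO_H Y$ is ruled out by the results of Vigolo~\cite{VigFund} and Fisher--Nguyen--van Limbeek~\cite{FNvL}, which show that this group contains a surjective image of the infinite acting group $H$.

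The main obstacle is the explicit filling of the cylinder in the second paragraph: the definition of $r$-homotopy is rigid (matched lengths and pointwise $r$-close perturbations), and at intermediate heights $y \in (0, 1)$ the shear $a_2$ moves each point by $2y$ rather than fixing it, so one cannot rely on pointwise fixity throughout. The staircase hypothesis that all cycle lengths divide $2$ is precisely what is needed to make $a_2$ act as the identity at $y = 0$ and $y = 1$, and thus to close the filling cylinder at its top and bottom into an $r$-trivial loop based at $v$; without this feature, the filling could at best identify $\alpha_j$ with another non-trivial loop rather than with the constant loop.
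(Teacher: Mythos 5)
Your reduction to showing that the natural map $p\colon \pi_1(Z_g,v)\to \pi_{1,r}(\{t\}\times Z_g,v)$ is trivial is exactly right, and your deduction of (i) and (ii) from triviality via \cref{surjectivityOfQI}, \cite{DK}, and \cite{VigFund}/\cite{FNvL} matches the paper. The gap is in the contraction argument for the generators.

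The observation that $a_2$ fixes every horizontal edge of $Z_g$ \emph{pointwise} is correct, but it works against you rather than for you: if $a_2$ is the identity on $\alpha_j$, then the warped-metric action edges along $\alpha_j$ are degenerate loops and provide no $r$-homotopy collapsing $\alpha_j$. The cylinder $h_i([0,L]\times[0,1])$ has two boundary circles, the horizontal loop at $y=0$ and the horizontal loop at $y=1$; the cylinder gives a (free) homotopy between them, not a nullhomotopy of either. You correctly anticipate this in your last paragraph (``the filling could at best identify $\alpha_j$ with another non-trivial loop''), but the escape you suggest -- that pointwise fixity at $y=0,1$ ``closes the cylinder into an $r$-trivial loop'' -- does not hold: fixing the two boundary circles pointwise does not cap off either end. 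So as written this step does not prove triviality.

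The paper's argument exploits the \emph{opposite} phenomenon. Instead of using the generator that fixes the loop, it uses the generator that shears it: $b=b_2$ acts nontrivially on a horizontal edge, and a computation in $\pi_1(Z_g,v)$ gives $b_*(\alpha_1)=\beta_1*\beta_0*\alpha_1$ and more generally $b_*(\alpha_i)=\zeta_i*\alpha_i$. Since $\omega$ and $b\circ\omega$ are within sup-distance $1$ in the warped metric, one obtains $p(\zeta_i)=1$, and by induction $p(\beta_i)=p(\beta_{i-2})^{\pm 1}$; the symmetric argument with $a$ handles the $\alpha_i$. Finally, the remaining two generators are killed by a ``half-edge'' trick: take half of a vertical edge $\beta_0$ whose midpoint is a fixed point of $a$ (this uses the square fixed by $\sigma$), concatenate with $a$ applied to the reversed half, and observe that this loop represents $\alpha_1^{-1}$ in $\pi_1$ while being $1$-close to the trivial loop $\beta_{1/2}*\beta_{1/2}^{-1}$. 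This last step is the place where a fixed point of the affine action is genuinely needed, and it is qualitatively different from what you propose: the fixed point is an \emph{interior} point of an edge, used to produce a nontrivial homotopy class out of a trivial one, rather than a pointwise-fixed whole edge. To repair your approach you would need to replace the cylinder filling with an argument of this algebraic flavor: derive relations in $\pi_{1,r}$ from the $1$-closeness of $\omega$ and $g\circ\omega$ for $g\in\{a_2,b_2\}$ applied to loops on which $g$ acts nontrivially, rather than trivially.
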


This result immediately implies \cref{fundamentalVanishing}.

\Cref{thm:ultimatestaircasetheorem} relies on two rather opposite properties of the action $\F_2 \acts Z_g$: the existence of fixed points and non-triviality at the homotopy level. More precisely, in \cref{everythingComesFromTopology} we used the fact that the action has a sufficient amount of fixed points (cf. \cref{noGlobalFixedPoint}) to show that the discrete fundamental group is an image of the fundamental group $\pi_1(Z_g,v)$. On the other hand, in the proof of \cref{thm:ultimatestaircasetheorem} we use the fact that the induced action of $\F_2$ on $\pi_1(Z_g,v)$ is sufficiently ``mixing'' for this image to be finite (actually trivial).

\begin{proof}[Proof of \cref{thm:ultimatestaircasetheorem}]
\begin{figure}
 \includegraphics[scale=0.5]{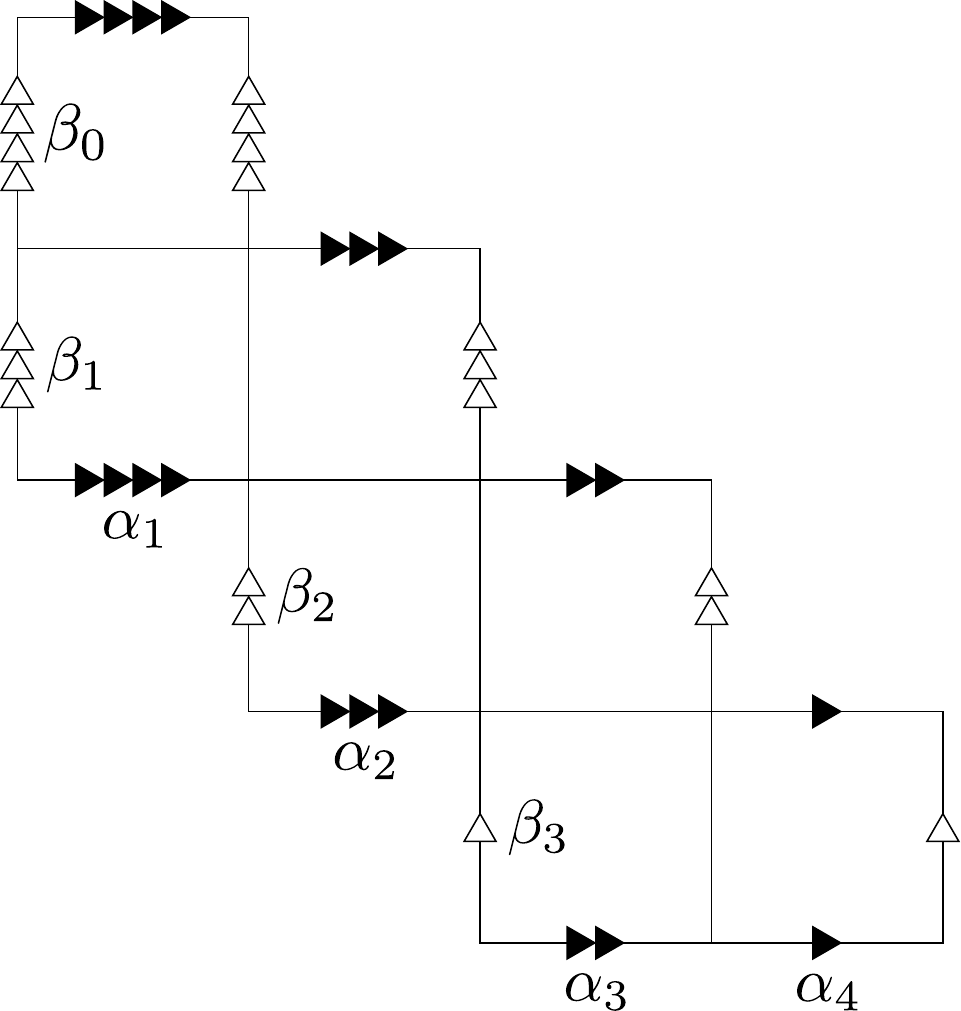}
 \caption{Staircase of genus 4 with labelled edges}
 \label{staircaseLabelled}
\end{figure}

It is straightforward to see that the staircase $Z_g$ can be viewed as a CW-complex with a single $0$-cell $v$, $1$-cells $\alpha_1, \ldots, \alpha_g, \beta_0, \ldots, \beta_{g-1}$, and a number of $2$-cells; see \cref{staircaseLabelled}. We can identify the $1$-cells with generators of $\pi_1(Z_g,v)$.

For notational convenience, let $a = a_2$ and $b=b_2$. Recall that $\F_2 = \langle a,b \rangle$.

First, note that $b_*(\alpha_1) = \beta_1 * \beta_0 * \alpha_1$; see \cref{staircaseLabelled}. If we fix a loop $\omega$ representing $\alpha_1$, then the supremum distance between $\omega$ and $b\circ \omega$ is at most 1 (as the metric is warped), and hence,
\[p(\alpha_1) = p([\omega]) = p([b\circ \omega]) = p(b_*(\alpha_1)) = p(\beta_1 * \beta_0 * \alpha_1).\]
Therefore,
\begin{equation}\label{C1}
p(\beta_1 * \beta_0) = 1;\;\text{that is,}\; p(\beta_1) = p(\beta_0)^{-1}.
\end{equation}

Define $\zeta_1=\beta_1 * \beta_0$, i.e., \eqref{C1} shows that $p(\zeta_1)=1$. For $i=2, \ldots, g-1$, we define
\[
	\zeta_i \defeq \beta_i * \alpha_{i-1}^{-1} * \beta_{i-1} *  \alpha_{i-2}^{-1} * \cdots * \beta_2 * \alpha_1^{-1} * \beta_1 * \beta_0 * \alpha_1 * \beta_0^{-1} *
\alpha_2 * \beta_1^{-1} * \cdots * \alpha_{i-1} * \beta_{i-2}^{-1}.
\]
Observe that
\begin{equation}\label{relationCi}
\zeta_i = \beta_i * \alpha_{i-1}^{-1} * \zeta_{i-1} * \alpha_{i-1} * \beta_{i-2}^{-1}.
\end{equation}
For $i=2, \ldots, g-1$, we have $b_*(\alpha_i) = \zeta_i * \alpha_i$ (see \cref{staircaseLabelled}), so $p(\alpha_i) = p(\zeta_i * \alpha_i)$, and hence, similarly as in \eqref{C1}, we get $p(\zeta_i) = 1$. Thus, by the triviality of $p(\zeta_{i-1})$ and by \eqref{relationCi}, we obtain
\[
	p(\beta_i) = p(\beta_{i-2}).
\]
Hence, we have proved that $p(\beta_i)$ equals either $p(\beta_1)$ or its inverse (depending on the parity) for each  $i=0,\ldots,g-1$.

By symmetry (replacing the action of $b$ with the action of $a$ and $\beta_i$ with $\alpha_{g-i}$), we get that $p(\alpha_i)$ equals either $p(\alpha_{g-1})$ or its inverse, depending on the parity.

By a similar argument, the action of $\F_2$ on $\pi_1(Z_g,v)$ can be used to show that the generators $p(\alpha_{g-1})$ and $p(\beta_{1})$ commute and are of order at most $2$, so
$\pi_{1,r}(\{t\}\times Z_g,v)$ is a quotient of $\mathbb Z_2 \oplus \mathbb Z_2$. We omit the details as the following argument (using the action on the fundamental groupoid instead) proves already that the generators are trivial.
Consider the path $\beta_{1/2}\colon[0,\frac{1}{2}] \to Z_g$ given by ``the first half'' of the edge $\beta_0$. Obviously, $\beta_{1/2} * \beta_{1/2}^{-1}\colon [0,1] \to Z_g$ is homotopically trivial. Since the midpoint of the edge $\beta_0$ is preserved by $a$, the function $\beta_{1/2} * (a\circ \beta_{1/2}^{-1})$ is still a continuous loop at $v$. Observe that $\beta_{1/2} * (a\circ \beta_{1/2}^{-1})$ belongs to the homotopy class $\alpha_1^{-1}$. As before, we note that the supremum distance between $\beta_{1/2} * \beta_{1/2}^{-1}$ and $\beta_{1/2} * (a\circ \beta_{1/2}^{-1})$ is at most~1, and hence,
\[1 = p(\beta_{1/2} * \beta_{1/2}^{-1}) =  p(\beta_{1/2} * (a\circ \beta_{1/2}^{-1})) = p(\alpha_1^{-1}),\]
so $p(\alpha_1^{-1})$, and hence $p(\alpha_{g-1})$, is trivial. By symmetry, $p(\beta_1)$ is trivial as well, and hence the whole group $\pi_{1,r}((\{t\}\times Z_g,d_{\F_2}),v)$ generated by $p(\alpha_{g-1})$ and $p(\beta_1)$ is trivial.

\smallskip

We now prove \eqref{in_particular}. It is known that whenever $r \geqslant 1$, $H$ is an infinite finitely generated group, and $(\cay(H/H_n, T_n))_n$ defines a box space of $H$, then $\pi_{1,r}(\cay(H/H_n, T_n))$ is infinite for sufficiently large~$n$. Indeed, \cite{DK}*{Lemma 3.4} gives a description of the discrete fundamental groups in the case of finitely presented groups, which was generalised to finitely generated groups  in \cite{VigFund}*{Theorem~58}.
Note that these results are formulated for a single Cayley graph $\cay(H/H_n, T_n)$ under the assumption that for a certain $R=R(r)$ the intersection $B_H(1_H, R)\cap H_n$ is trivial, so they still apply with our general definition of a box space, \cref{def:boxspace}.
Consequently, by \cref{surjectivityOfQI}, it is not possible for $(\cay(H/H_n, T_n))_n$ and an $\cO$-graph associated with $\cO_{\mathbb{F}_2}(Z_g)$ to be quasi-isometric with uniform constants. The same holds if one replaces ``quasi-isometric with uniform constants'' by ``coarsely equivalent with uniform control functions'', because both $\cO_{\mathbb{F}_2}(Z_g)$ and $(\cay(H/H_n, T_n))_n$ are quasi-geodesic.

\smallskip

We now prove \eqref{furthermore}. It again follows from \cref{surjectivityOfQI} because we claim that the discrete fundamental groups $\pi_{1,r}((\{t\} \times M, d_G),m_{*})$ are infinite, where $m_\ast$ is a basepoint in $\{t\} \times M$. Indeed,
if the acting group $G$ is finitely presented and the action $G\acts M$ is isometric, free, and minimal,
then $\pi_{1,r}((\{t\} \times M, d_G),m_{*})$ is a semi-direct product $\pi_1(M)\rtimes G$ by \cite{FNvL}*{Section~3}. In particular, it is infinite. More generally, if the action is free, then \cite{VigFund}*{Theorem~7.1} asserts that $\pi_{1,r}((\{t\} \times M, d_G),m_{*})$ is the quotient of a semi-direct product $\pi_1(M)\rtimes \F_S$ (where $S$ is the generating set of $G$)
by the normal closure of a certain set $K_r$ satisfying
\[
	K_r \subseteq \left\{ (\omega, g) \in \pi_1(M)\rtimes \F_S \;\big\vert\; |g|\leqslant 4r \text{ and }  q(g)=1_G \right\},
\]
where $q\colon F_S \to G$ is the quotient map.
In particular, $K_r \subseteq \pi_1(M) \rtimes \ker q$, and hence, $\pi_{1,r}((\{t\}\times M, d_G),m_{*}) \cong \pi_1(M)\rtimes \F_S / \langle\!\langle{K_r}\rangle\!\rangle$ surjects onto $G \cong \pi_1(M)\rtimes \F_S / (\pi_1(M) \rtimes \ker q)$, where $\langle\!\langle{K_r}\rangle\!\rangle$ denotes the normal closure of $K_r$.
\end{proof}

\begin{rem}\label{newTrivialFundamentalGroups}
The main application of the theory developed in \cite{VigFund} is the proof that, in our terminology, the  $\cO$-graphs associated to certain actions are not coarsely equivalent to any box space. The first main class of such actions (see \cite{VigFund}*{Corollaries~5 and~43}) are actions by rotations on odd-dimensional spheres. The second main class (see \cite{VigFund}*{Theorem~10 and Corollary~67}) consists of free isometric actions $G\acts M$ on a Riemannian manifold with finite fundamental group, under certain assumptions on $G$; an explicit example going back to \cite{Margulis} is $\SO(d, \Z[\frac 1 5])\acts \SO(d, \R)$ with $d\geqslant 5$.

In both cases, the space acted upon has finite fundamental group and the action is isometric, and in the latter case, it is also free. One of the novelties of \Cref{thm:ultimatestaircasetheorem} is that we show an analogous statement for a class of actions such that neither the space acted upon has finite fundamental group nor the action is isometric or free. Interestingly, \cref{thm:ultimatestaircasetheorem} shows not only coarse non-equivalence with box spaces, but also with warped cones over actions from the latter class from \cite{VigFund}.
\end{rem}

\section{Coarse non-equivalence of origami expanders as bundles over the original Margulis expander} \label{bundles}

\begin{conv}\label{conv:ASeqAsAMap} Since we often deal with sequences of spaces, we use the brief category-theoretic notation $f\colon X \to Y$ to denote a sequence ${(f_n\colon X_n\to Y_n)}_n$ of maps between sequences of spaces $X={(X_n)}_n$ and $Y={(Y_n)}_n$.
\end{conv}

In this section, we study the problem of distinguishing our origami expanders from each other. Instead of distinguishing them per se, we show that the sequence of maps $P^\rho\colon\cO_{\F_2} \Sigma \to \cO_{\SL_2(\Z)}\T^2$ induced by the branched covering $\rho\colon \Sigma\to \T^2$ is never a coarse equivalence. Furthermore, given another branched covering $\rho'\colon \Sigma'\to \T^2$ of an origami surface, we define the corresponding sequences $P^\rho$ and $P^{\rho'}$ to be \emph{coarsely equivalent} if there are coarse equivalences $l\colon \cO_{\SL_2(\Z)}\T^2 \to \cO_{\SL_2(\Z)}\T^2$ and $u\colon \cO_{\F_2} \Sigma \to \cO_{\F_2} \Sigma'$ such that the appropriate square commutes, namely the maps $l_t\circ P^\rho_t$ and $P^{\rho'}_t\circ u_t$ are close, uniformly in $t\in \N$. We show that there are infinitely many maps $P^\rho$ that are pairwise not coarsely equivalent.

\begin{thm}\label{notCEq}
For any origami $\Sigma$ equipped with the action from \cref{thm:F2}, the sequence $P^\rho\colon\cO_{\F_2} \Sigma \to \cO_{\SL_2(\Z)}\T^2$ is not a coarse equivalence.
\end{thm}

\begin{proof}
We will identify the free group $\F_2$ acting on $\Sigma$, with the subgroup of $\SL_2(\Z)$ generated by the matrices $a_k$ and $b_k$.
Since $\rho$ is equivariant with respect to the action $\F_2\acts \Sigma$ and the restriction of the action $\SL_2(\Z)\acts \T^2$ to $\F_2$, we get
\begin{equation}\label{inclusion}
\rho(\F_2 x) = \F_2\rho(x)
\end{equation}
for every $x \in \Sigma$.

Let $\gamma \in \SL_2(\Z) \smallsetminus \F_2$, and let $y\in \T^2$ be generic. Since by \cref{freeness on torus} the action of $\SL_2(\Z)$ on the orbit of $y$ is free, the points $y$ and $\gamma y$ lie in different $\F_2$-orbits. It follows from \eqref{inclusion} that for any $x\in \rho^{-1}(y)$ and $x'\in \rho^{-1}(\gamma y)$, the points $x$ and $x'$ lie in different orbits. By Remark~3.1 in \cite{Scompletions}, we have
\[\lim_{t\to \infty }d_{\F_2}((t,x), (t,x')) = \infty.\]
However, we have
\[d_{\SL_2(\Z)}(P^\rho_t(t,x), P^\rho_t(t,x')) = d_{\SL_2(\Z)}((t,y), \gamma (t, y)) \leqslant |\gamma|_{\SL_2(\Z)},\]
so the distance of the images under $P^\rho_t$ of the points $(t,x)$ and $(t,x')$ remains bounded as $t$ goes to infinity, showing that $P^\rho = \left(P^\rho_t\right)_{t\in \N}$ is not a coarse equivalence.
\end{proof}

For completeness, let us also note the following.

\begin{prop}\label{trivialButNecessary}
For any origami $\Sigma$ equipped with the action from \cref{thm:F2}, the sequence $P^\rho\colon\cO_{\F_2} \Sigma \to \cO_{\SL_2(\Z)}\T^2$ is bornologous.
\end{prop}
\begin{proof}
Up to coarse equivalence (in fact, up to bi-Lipschitz equivalence), the warped cone over an action does not depend on the choice of the generating set of the acting group \cite{roe}*{Proposition 1.7}. Therefore, we assume that the matrices $a_k$ and $b_k$ belong to the generating set of $\SL_2(\Z)$.

By~\cite{Scompletions}*{Proposition~2.1}, for every $R<\infty$ there exists $R'<\infty$ such that for every $t\in \N$ and every $x,x'\in \{t\}\times \Sigma \in \cO_{\F_2} \Sigma$ with $d_{\F_2}(x,x')\leqslant R$ there exists $\gamma\in \F_2$ such that $|\gamma| + d(\gamma x, x')\leqslant R'$. We claim $P^\rho$ is bornologous with a control function $\rho$ given by $R\mapsto R'$.

Indeed, let $t\in \N$ and $x,x'\in \{t\}\times \Sigma$, and denote $R\defeq d_{\F_2}(x,x')$. Let $\gamma \in \F_2$ be such that $|\gamma| + d(\gamma x, x')\leqslant R'$. We obtain
\begin{align*}
d_{\SL_2(\Z)}(P^\rho_t(x), P^\rho_t(x'))
&\leqslant |\gamma|_{\SL_2(\Z)} + d(\gamma P^\rho_t(x), P^\rho_t(x')) \\
&\leqslant |\gamma|_{\F_2} + d(P^\rho(\gamma x), P^\rho(x')) \\
&\leqslant |\gamma|_{\F_2} + d(\gamma x, x') \leqslant R'.\qedhere
\end{align*}
\end{proof}

\begin{defn}[Coarsely $n$-to-1 maps~\cite{MV}]\label{defn:nTo1} A bornologous sequence of maps $f_t\colon X_t\to Y_t$ is called \emph{coarsely $n$-to-1} for $n\in \N$ if for every $R<\infty$ there exists $S<\infty$ such that for every $t\in \N$ and $B\subseteq Y_t$ of diameter at most $R$ its inverse image can be decomposed as $f_t^{-1}(B) = \bigcup_{i=1}^n A_i$ with $A_i \subseteq X$ of diameter at most $S$.
\end{defn}

It is not hard to see that we may replace the sequence $(X_t)_t$ by a coarsely equivalent one, and the composition of the equivalences with the functions $f_t$ will still be coarsely $n$-to-$1$. Similarly, we may replace  the sequence $(Y_t)_t$ by a coarsely equivalent one.

We record the following easy observation.

\begin{lem}\label{productOfDegrees}\cite{MV}*{Proposition~3.3}
If $f$ and $g$ are coarsely $n$-to-1 and coarsely $m$-to-1 respectively, then the composition $g\circ f$ (if it makes sense) is coarsely $nm$-to-1.
\end{lem}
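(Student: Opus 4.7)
The plan is to unfold the definitions and chain them together: apply the coarsely $m$-to-1 property of $g$ first, and then the coarsely $n$-to-1 property of $f$ to each of the resulting pieces, to obtain an $nm$-fold decomposition of the preimage under $g\circ f$ of any bounded set.

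More precisely, let $R<\infty$ be given. Since $g$ is coarsely $m$-to-1, there exists $S_g=S_g(R)<\infty$ such that for every $t\in\N$ and every $C\subseteq Z_t$ of diameter at most $R$, we can write $g_t^{-1}(C)=\bigcup_{j=1}^m B_j$ with each $B_j\subseteq Y_t$ of diameter at most $S_g$. Since $f$ is coarsely $n$-to-1, applied at scale $S_g$, there exists $S=S_f(S_g)<\infty$ such that for every $t\in\N$ and every subset of $Y_t$ of diameter at most $S_g$, in particular for each $B_j$ from the previous step, we can write $f_t^{-1}(B_j)=\bigcup_{i=1}^n A_{ij}$ with each $A_{ij}\subseteq X_t$ of diameter at most $S$.

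Combining these, we have
\[
(g_t\circ f_t)^{-1}(C)=f_t^{-1}\bigl(g_t^{-1}(C)\bigr)=f_t^{-1}\Bigl(\bigcup_{j=1}^m B_j\Bigr)=\bigcup_{j=1}^m\bigcup_{i=1}^n A_{ij},
\]
which is a union of $nm$ subsets of $X_t$, each of diameter at most $S$. Thus $S=S_f\circ S_g$ witnesses that $g\circ f$ is coarsely $nm$-to-1, provided we know $g\circ f$ is bornologous; but this is immediate, since the composition of two bornologous sequences is bornologous (the composition of their control functions serves as a control function for the composition).

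I do not anticipate any obstacle: the statement is a purely formal consequence of the definition, and the only thing to keep track of is the order of composition of the scale functions $S_f$ and $S_g$. Everything else is a straightforward set-theoretic manipulation.
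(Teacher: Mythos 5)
Your proof is correct and is exactly the natural unfolding of \cref{defn:nTo1}: apply $g$'s decomposition to obtain $m$ pieces of diameter $\leqslant S_g(R)$, then apply $f$'s decomposition at scale $S_g(R)$ to each piece, giving $nm$ pieces of diameter $\leqslant S_f(S_g(R))$, with $S_f\circ S_g$ as the control scale. The paper itself gives no proof of \cref{productOfDegrees}, citing \cite{MV}*{Proposition~3.3} instead, and merely remarks that the argument carries over from single maps to sequences; your argument is the expected one.
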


In \cite{MV}, \cref{defn:nTo1} and \cref{productOfDegrees} are formulated for a single map $f\colon X\to Y$, but \cref{productOfDegrees} holds mutatis mutandis in the case of sequences as in our \cref{defn:nTo1}.

Let us prove opposite estimates.

\begin{lem}\label{opposite-MV}
If $f\colon X\to Y$ is not coarsely $n$-to-1 for some $n\in \N$, then for any bornologous $g\colon Y \to Z$ the composition $g\circ f$ is not coarsely $n$-to-1.
\end{lem}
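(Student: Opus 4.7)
The proof is essentially the contrapositive of a standard ``factor through'' principle: if the composition $g\circ f$ is coarsely $n$-to-$1$ and $g$ is bornologous, then $f$ itself is already coarsely $n$-to-$1$. The bornologous hypothesis on $g$ is used only to control how diameters in $Y$ transfer to diameters in $Z$.

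My plan is as follows. Suppose $f\colon X\to Y$ fails to be coarsely $n$-to-$1$, witnessed by some $R<\infty$: for every $S<\infty$ there exist $t\in\N$ and $B\subseteq Y_t$ with $\diam B\leqslant R$ such that $f_t^{-1}(B)$ cannot be written as $\bigcup_{i=1}^n A_i$ with $\diam A_i\leqslant S$. Since $g\colon Y\to Z$ is bornologous, there is a non-decreasing control function $\rho_g$ with $d_{Z_t}(g_t(y),g_t(y'))\leqslant \rho_g(d_{Y_t}(y,y'))$ for all $t$ and $y,y'\in Y_t$. Set $R'\defeq \rho_g(R)$; then for every $t$ and every $B\subseteq Y_t$ of diameter at most $R$, the image $g_t(B)\subseteq Z_t$ has diameter at most $R'$.

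Now assume for contradiction that $g\circ f$ is coarsely $n$-to-$1$, and let $S'<\infty$ be a constant that works for $R'$ in the sense of \cref{defn:nTo1}. Pick $B\subseteq Y_t$ of diameter at most $R$ (for some $t$) that witnesses the failure of $f$ at scale $S'$, i.e., such that $f_t^{-1}(B)$ cannot be decomposed into $n$ subsets each of diameter at most $S'$. Consider $C\defeq g_t(B)\subseteq Z_t$; by the previous paragraph, $\diam C\leqslant R'$. By the assumption on $g\circ f$, we can write
\[
(g_t\circ f_t)^{-1}(C)\;=\;f_t^{-1}(g_t^{-1}(C))\;=\;\bigcup_{i=1}^n A'_i,\qquad \diam A'_i\leqslant S'.
\]
Since $B\subseteq g_t^{-1}(g_t(B))=g_t^{-1}(C)$, we have $f_t^{-1}(B)\subseteq f_t^{-1}(g_t^{-1}(C))$, so setting $A_i\defeq A'_i\cap f_t^{-1}(B)$ yields
\[
f_t^{-1}(B)=\bigcup_{i=1}^n A_i,\qquad \diam A_i\leqslant \diam A'_i\leqslant S',
\]
contradicting the choice of $B$. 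Hence $g\circ f$ is not coarsely $n$-to-$1$.

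There is no real obstacle here; the only subtlety to keep straight is that the constant $R'$ depends solely on $R$ and on the control function $\rho_g$ (and not on $t$, $B$, or $S'$), so the witness scale for $g\circ f$ can be chosen uniformly before invoking the assumed decomposition. Once this bookkeeping is done, the argument is a direct diagram chase using $f^{-1}(B)\subseteq f^{-1}(g^{-1}(g(B)))$ and restricting the given decomposition.
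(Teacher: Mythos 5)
Your proof is correct and takes essentially the same approach as the paper: both arguments set $R'$ to be the bornologous control of $R$, use the inclusion $f_t^{-1}(B)\subseteq (g_t\circ f_t)^{-1}(g_t(B))$, and observe that a covering of the latter by $n$ sets of diameter $\leqslant S'$ would restrict to one of $f_t^{-1}(B)$. The only difference is presentational: you argue by contradiction, whereas the paper directly exhibits $R'$ as a witness that $g\circ f$ fails the definition.
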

\begin{proof}
By the assumption, there exists $R<\infty$ such that for every $S<\infty$ there exists $t_S\in \N$ and a set $B_{S} \subseteq Y_{t_S}$ of diameter at most $R$ such that $f_{t_S}^{-1}(B_{S})$ cannot be covered by $n$ subsets of $X_{t_S}$ of diameter at most $S$.

Since $g$ is bornologous, there exists $R'<\infty$ such that for every $t\in \N$ the image under $g_t$ of any set $B \subseteq Y_t$ of diameter at most $R$ has diameter at most $R'$. Then, for $R'$ there is no $S'$ such that for every $t\in \N$ the inverse image via $g_t\circ f_t$ of every subset $C \subseteq Z_t$ of diameter at most $R'$ can be covered by $n$ subsets of $X_t$ of diameter at most $S'$. Indeed, for every $S'<\infty$ if we take $C=g_{t_{S'}}(B_{S'})$, then the inverse image $(g_{t_{S'}}\circ f_{t_{S'}})^{-1}(C)$ contains $f_{t_{S'}}^{-1}(B_{S'})$ and hence cannot be covered by $n$ subsets of $X$ of diameter at most $S'$.
\end{proof}

\begin{lem}\label{secondOpposite} Let $e\colon W\to X$ be bornologous and surjective, and let $f\colon X \to Y$ be bornologous. If the composition $f\circ e$ is coarsely $n$-to-1 for some $n\in \N$, then so is $f$.
\end{lem}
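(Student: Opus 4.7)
The plan is to directly unpack the definition and exploit surjectivity of $e$ to push a good cover from $W$ down to $X$. Concretely, I would fix $R < \infty$ and use the hypothesis that $f \circ e$ is coarsely $n$-to-$1$ to obtain a constant $S' < \infty$ with the property that for every $t \in \N$ and every $B \subseteq Y_t$ of diameter at most $R$, one can write
\[
    (f_t \circ e_t)^{-1}(B) \;=\; \bigcup_{i=1}^{n} A_i'
\]
with each $A_i' \subseteq W_t$ of diameter at most $S'$.

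Next, I would apply $e_t$ to this decomposition. Since each $e_t$ is assumed to be surjective, the elementary set-theoretic identity $e_t(e_t^{-1}(C)) = C$ gives
\[
    f_t^{-1}(B) \;=\; e_t\bigl(e_t^{-1}(f_t^{-1}(B))\bigr) \;=\; e_t\bigl((f_t\circ e_t)^{-1}(B)\bigr) \;=\; \bigcup_{i=1}^{n} e_t(A_i').
\]
This already displays $f_t^{-1}(B)$ as a union of $n$ pieces; the only thing left is a uniform diameter bound on each $e_t(A_i')$.

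That bound comes from bornologousness of $e$: there is a non-decreasing control function $\rho_e\colon [0,\infty)\to[0,\infty)$, independent of $t$, such that for any subset $A \subseteq W_t$ of diameter at most $S'$, the image $e_t(A)$ has diameter at most $\rho_e(S')$. Setting $S \defeq \rho_e(S')$ therefore produces, for the originally chosen $R$, a uniform constant witnessing the coarsely $n$-to-$1$ property for $f$.

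I do not anticipate any real obstacle: the only subtlety is the identity $e_t(e_t^{-1}(C)) = C$, which requires surjectivity of each $e_t$ and is the reason the hypothesis on $e$ is stated the way it is; without surjectivity one would only get $e_t(e_t^{-1}(C)) = C \cap e_t(W_t)$, which would leave part of $f_t^{-1}(B)$ uncovered. No control on $f$ beyond bornologousness is used, mirroring the asymmetric roles of $f$ and $g$ in \cref{opposite-MV}.
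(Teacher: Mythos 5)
Your proof is correct and takes essentially the same route as the paper's: both use the decomposition of $(f_t\circ e_t)^{-1}(B)$ supplied by the hypothesis, push it forward through $e_t$ using surjectivity via the identity $f_t^{-1}(B)=e_t\bigl((f_t\circ e_t)^{-1}(B)\bigr)$, and control the diameters of the images by bornologousness of $e$. Your explicit remark on where surjectivity is needed is a nice addition, but the argument itself matches the paper's.
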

{The surjectivity assumption in \cref{secondOpposite} can be relaxed to coarse surjectivity, but we will not need it.}
\begin{proof}[Proof of \cref{secondOpposite}]
 Since $e$ is bornologous, there is a non-decreasing function $\rho$ such that $\diam e_t(U) \leqslant \rho(\diam U)$ for every $t\in \N$ and $U\subseteq W_t$ of finite diameter.
By the assumption, for every $R<\infty$ there is $S<\infty $ such that for every $B \subseteq Y_t$ of diameter at most $R$ the inverse image $(f_t\circ e_t)^{-1}(B)$ can be covered by $n$ subsets $A_1,\ldots, A_n$ of $W_t$ of diameter at most $S$. Then, $f_t^{-1}(B) = e_t\big((f_t\circ e_t)^{-1}(B)\big)$ can be covered by the sets $e_t(A_1),\ldots, e_t(A_n)$, each of which has diameter at most $\rho(S)$.
\end{proof}

Let us provisionally call the smallest integer $n$ such that a given sequence $f$ of maps is coarsely $n$-to-1 the \emph{coarse degree} of $f$ (we say that the coarse degree is infinite if no such $n$ exists).
\Cref{opposite-MV} shows that the coarse degree is non-decreasing under post-composition with bornologous maps. Similarly, \cref{secondOpposite} shows that the coarse degree is non-decreasing under pre-composition with bornologous surjective maps.

\begin{lem}\label{coarse-degree-generalised} Let $H\leqslant G$ be finitely generated groups and $G\acts Y$ be an action by Lipschitz homeomorphisms on a compact metric space $Y$ containing a free orbit. Then, the coarse degree of the sequence $P^{\id_{Y}}\colon \cO_{H}Y \to \cO_{G}Y$ equals the index $[G:H]$, where every function forming the sequence $P^{\id_{Y}}$ is the identity map of the underlying set.
\end{lem}
\begin{proof}
If we assume, as we may, that the generating set of $H$ is contained in the generating set of $G$, then clearly the warped metrics satisfy $d_G\leqslant d_H$, hence $P^{\id_{Y}}$ is bornologous.

Let us denote $n\defeq [G:H]\in \N \cup \{\infty \}$. Since subgroup inclusion is always a coarse embedding, there exists a non-decreasing function $c$ such that $|\gamma |_H \leqslant c(|\gamma |_G)$ for all $\gamma\in H$. Pick representatives $(\gamma_i)_{i=1}^n \in G^n$ of the right cosets $H\backslash G$.

First, we will show that the coarse degree of $P^{\id_Y}$ is at most $n$. For $n=\infty$ there is nothing to prove, so assume $n<\infty$ and let $L$ be the maximal length of~$\gamma_i$.
Let $R<\infty$. By Proposition~2.1 in \cite{Scompletions}, there exists $R'<\infty$ such that for every $x,x'\in \{t\}\times Y \in \cO_G Y$ with $d_{G}(x,x')\leqslant R$ there exists $\gamma\in G$ such that $|\gamma|_G + d(\gamma x, x')\leqslant R'$. Now, let $1\leqslant i\leqslant n$ be such that $\gamma_i$ belongs to the same coset as $\gamma$. Then $\gamma\gamma_i^{-1}$ has length at most $R'+L$ with respect to the generating set of $G$, and hence at most $R''\defeq c(R'+L)$ with respect to the generating set of $H$. We put $S_0=R'+R''$.

Let $t\in \N$ and $B \subseteq \{t\} \times Y \in \cO_G Y$ have diameter at most $R$, and pick $x \in B$. By the above, for every $x' \in B$, there exists $1\leqslant i\leqslant n$ and $\gamma\in G$ such that $\gamma \gamma_i^{-1}  \in H$,  $|\gamma \gamma_i^{-1}|_H \leqslant R''$, and $d(\gamma \gamma_i^{-1} x_i, x') = d(\gamma x, x') \leqslant R'$ for $x_i=\gamma_i x$. It follows that $d_H(x_i, x') \leqslant |\gamma \gamma_i^{-1}|_H + d(\gamma \gamma_i^{-1} x_i, x') \leqslant S_0$. It follows that the set $B$ is contained in the union of the balls around $x_i$ of radius $S_0$ with respect to the metric $d_H$, so taking $S=2S_0$ finishes the proof that $P^{\id_Y}$ is a coarsely $n$-to-$1$.

It remains to show that the coarse degree of $P^{\id_Y}$ is at least $n$, that is, that $P^{\id_Y}$ is not coarsely $m$-to-1 for any $m<n$. This is similar to the proof of \cref{notCEq}. Let~$L'$ be the maximal length of $\gamma_i$ for $i\leqslant m+1$, and let $y\in Y$ be a point whose orbit is free. Note that for any $t\in \N$ the set $B_t = \{ (t, \gamma_i y) : 1\leqslant i\leqslant m+1\} \subseteq \{t\}\times Y \in \cO_G Y$ is contained in the ball of radius $L'$ around $(t,y)$ so it has diameter at most $2L'$. On the other hand, since the action of $G$ on the orbit of $y$ is free and $\gamma_i$ belong to distinct cosets of $H$, it follows that $d_H((t, \gamma_i y), (t, \gamma_j y))$ goes to infinity with~$t$ for $i\neq j$. Hence, for every $S<\infty$ there exists $t<\infty$ such that any non-singleton subset of $B_t$ has diameter larger than $S$, and hence the only way to cover $(P_t^{\id_Y})^{-1}(B_t) = B_t \subseteq \{t\}\times Y \in \cO_H Y$ with (non-empty) sets of diameter at most $S$ is to cover it by $m+1$ singletons.
\end{proof}

\begin{cor}\label{degreeOfTheIdentity}
Let $k\geqslant 2$ and let $\Sigma$ be the torus equipped with the action of $\F_2$ given by the matrices $a_k$ and $b_k$. The coarse degree of $P^{\id_{\T^2}}\colon \cO_{\F_2}\Sigma\to \cO_{\SL_2(\Z)}\T^2$ equals $[\SL_2(\Z) : \langle a_k, b_k \rangle]$.
\end{cor}
\begin{proof}
By \cref{freeness on torus}, the action $\SL_2(\Z)\acts \T^2$ has many free orbits, and hence the claim follows immediately from \cref{coarse-degree-generalised}.
\end{proof}

Denote the sequence $\cO_{\F_2} \Sigma \to \cO_{\F_2}\T^2$ of maps induced by the branched covering $\rho\colon \Sigma\to \T^2$ by $Q^\rho$ (so that $P^\rho = P^{\id_{\T^2}} \circ Q^\rho$). For a permutation $\sigma$ of a finite set $X$, let $c(\sigma)$ denote the number of cycles in the cycle decomposition of $\sigma$ (in other words, it is the number of orbits of the action of $\langle \sigma \rangle$ on $X$).

\begin{lem}\label{degreeMain}
Let $\Sigma$ be an origami with origami datum $(m,\sigma,\tau)$ equipped with the action from \cref{thm:F2}, and let $\rho\colon \Sigma\to \T^2$ denote the corresponding branched covering map. Then the coarse index $\iota$ of $Q^\rho$ satisfies
\[
	\max(c(\sigma), c(\tau)) \leqslant \iota \leqslant m.
\]
\end{lem}
\begin{proof}
The proof that $Q^\rho$ is bornologous is identical to the proof of \cref{trivialButNecessary}, and thus we omit it.

We will first show that $\iota \leqslant m$. Let $R<\infty$. As before, there exists $R'<\infty $ such that for every $t\in \N$ and every $y,y'\in \{t\}\times \T^2 \in \cO_{\F_2} \T^2$ with $d_{\F_2}(y,y')\leqslant R$ there exists $\gamma\in \F_2$ such that $|\gamma| + d(\gamma y, y')\leqslant R'$. Then, since the maps $Q^\rho_t$ are clearly equivariant, we have $\gamma (Q_t^\rho)^{-1}(y) = (Q_t^\rho)^{-1}(\gamma y)$. Now, for every $x' \in (Q_t^\rho)^{-1}(y')$ there exists $\tilde x \in (Q_t^\rho)^{-1}(\gamma y)$ such that $d(\tilde x, x') = d(\gamma y, y')$ ($\tilde x$ need not be unique). Denote $x\defeq \gamma^{-1}\tilde x \in \gamma^{-1}(Q_t^\rho)^{-1}(\gamma y) = (Q_t^\rho)^{-1}(y)$. Summarising, for every $x' \in (Q_t^\rho)^{-1}(y')$ there exists $x \in (Q_t^\rho)^{-1}(y)$ such that
\[d_{\F_2}(x,x') \leqslant |\gamma| + d(\gamma x, x') = |\gamma| + d(\gamma y, y')\leqslant R'.\]
It follows that for every $B\subseteq \{t\}\times \T^2 \in \cO_{\F_2} \T^2$ of diameter at most $R$, we can pick $y\in B$, and then $(Q_t^\rho)^{-1}(B)$ is contained in the union of the balls of radius $R'$ around the elements of the fibre $(Q_t^\rho)^{-1}(y)$, whose cardinality is clearly at most $m$.

It remains to show $\max(c(\sigma), c(\tau)) \leqslant \iota$, and by symmetry it suffices to show $c(\tau) \leqslant \iota$. Enumerate the orbits of $\tau$ by the numbers $1,\ldots, c(\tau)$, and for every $j\in\{1,\ldots, c(\tau)\}$ pick a representative $i\in \{1,\ldots,m\}$ of the $j$th orbit, and let $x_j\defeq (1/k, 0)\in P_i$ be a point of the bottom edge of the $i$th square $P_i$. Note that every point $x_j$ is fixed by $a_k \in \F_2$, and the element $b_k \in \F_2$ permutes the finite set $\{(1/k, 0)\in P_{\tau^n(i)} : n\in \Z \}$, which is in fact the orbit of $x_j$ (if we replace $1/k$ by some multiple $l/k$ for $l\in \N_+$, the corresponding set may be a union of many orbits, which would even improve our lower bound on $\iota$). It follows that for $j\neq j'$, both from $\{1,\ldots, c(\tau)\}$, the points $x_j$ and $x_{j'}$ belong to different orbits, and hence the distance $d_{\F_2}((t,x_j), (t,x_{j'}))$ goes to infinity with~$t$ despite the fact that they have the same image $(t, (1/k, 0))$ under $Q_t^\rho$. Now, as in the proof of \cref{coarse-degree-generalised}, it follows that the coarse degree of $Q_t^\rho$ is at least the cardinality of the set $\{x_j : j\in\{1,\ldots, c(\tau)\} \}$, namely $c(\tau)$.
\end{proof}

Under certain assumptions, \cite{Spiecewise}*{Theorem~4.1} proves that when a coarse equivalence $\cO_G X \to \cO_H Y$ is induced by a surjective map $\rho\colon X\to Y$, then $H$ is a quotient of $G$ by a finite kernel and $\rho$ is equivariant. Hence, maps of warped cones induced by equivariant maps of surfaces are promising candidates for coarse equivalences, even though \cite{Spiecewise}*{Theorem~4.1} does not directly apply to our actions on origami surfaces.
However, the following result shows that the converse to \cite{Spiecewise}*{Theorem~4.1} does not hold, as in this case the acting groups coincide and $\rho$~is equivariant.

The statement is analogous to \cref{notCEq}.

\begin{thm}
Let $\Sigma$ be an origami with origami datum $(m,\sigma,\tau)$ equipped with the action from \cref{thm:F2}, and let $\rho\colon \Sigma\to \T^2$ denote the corresponding branched covering map. Suppose that at least one of $\sigma$ and $\tau$ is not the full cycle on $\{1,\ldots,m\}$. Then $Q^\rho\colon \cO_{\F_2} \Sigma \to \cO_{\F_2}\T^2$ is not a coarse equivalence.
\end{thm}
\begin{proof}
The assumption that at least one of $\sigma$ and $\tau$ is not the full cycle is equivalent to the inequality $\max(c(\sigma), c(\tau)) > 1$, so \cref{degreeMain} shows that $Q^\rho$ is not coarsely $1$-to-$1$. However, every coarse embedding is coarsely $1$-to-$1$ (in fact, also the converse holds).
\end{proof}

\begin{prop}\label{finalDegreeEstimate}
Let $\Sigma$ be an origami with origami datum $(m,\sigma,\tau)$ equipped with the action from \cref{thm:F2}, and let $\rho\colon \Sigma\to \T^2$ denote the corresponding branched covering map. Then the coarse index $\iota$ of $P^\rho$ satisfies
\[ \max(c(\sigma), c(\tau), [\SL_2(\Z) : \langle a_k, b_k \rangle]) \leqslant \iota \leqslant m \cdot [\SL_2(\Z) : \langle a_k, b_k \rangle].\]
\end{prop}
\begin{proof}
Note that $P^\rho = P^{\id_{\T^2}}\circ Q^\rho$. Hence, the inequality $\max(c(\sigma), c(\tau)) \leqslant \iota$ follows from the first inequality of \cref{degreeMain} together with \cref{opposite-MV} (in the light of \cref{trivialButNecessary}). The inequality $[\SL_2(\Z) : \langle a_k, b_k \rangle] \leqslant \iota$ follows from \cref{degreeOfTheIdentity} by \cref{secondOpposite}.

The second inequality follows from the second inequality of \cref{degreeMain} and from \cref{degreeOfTheIdentity} by \cref{productOfDegrees}.
\end{proof}

\begin{rem}In particular, the coarse degree of $P^\rho$ is finite if and only if $k=2$ because otherwise
$[\SL_2(\Z) : \langle a_k, b_k \rangle] = \infty$.
\end{rem}

\begin{ex}\label{infinitelyManyDegreesForStaircases} For $\Sigma=Z_g$ (see \cref{staircaseDef}), \cref{finalDegreeEstimate} gives
\[
	\max(g, 12) \leqslant \iota \leqslant 12(2g-1).
\]
\end{ex}

Suppose that $f$ is bornologous and $e, e'$ are coarse equivalences such that the composition $e'\circ f\circ e$ makes sense. Then, as remarked above, for every $n\in \N$ the map $f$ is coarsely $n$-to-1 if and only if $e'\circ f\circ e$ is coarsely $n$-to-1. It is also easy to verify that if $f,f'\colon X\to Y$ are close, then $f$ is coarsely $n$-to-1 if and only if $f'$ is such. Hence, if two sequences $P^\rho$ and $P^{\rho'}$ of maps are coarsely equivalent as defined at the beginning of \cref{bundles}, then their coarse degrees must coincide.

\begin{thm}\label{InfinitelyManyBundles}
There are infinitely many pairwise  non-coarsely-equivalent sequences $P^\rho\colon\cO_{\F_2} \Sigma \to \cO_{\SL_2(\Z)}\T^2$ induced by the branched coverings $\rho\colon \Sigma\to \T^2$ for different origamis $\Sigma$.
\end{thm}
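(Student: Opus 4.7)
The plan is to use the coarse degree (the smallest $n$ for which a bornologous map is coarsely $n$-to-$1$) as the distinguishing invariant. The paragraph immediately preceding the theorem establishes that coarse degree is invariant under the notion of coarse equivalence of sequences $P^\rho$ introduced at the start of \cref{bundles}: pre- and post-composing a bornologous map with coarse equivalences preserves the property of being coarsely $n$-to-$1$, and closeness of maps preserves it as well. Therefore it suffices to exhibit infinitely many origamis for which the coarse degrees of the associated $P^\rho$ are pairwise distinct.

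The main technical input is \cref{finalDegreeEstimate} applied to the staircases, which is recorded in \cref{infinitelyManyDegreesForStaircases}: for $\Sigma = Z_g$, the coarse degree $\iota_g$ of $P^\rho\colon \cO_{\F_2} Z_g \to \cO_{\SL_2(\Z)}\T^2$ satisfies
\[
\max(g,12)\;\leqslant\;\iota_g\;\leqslant\;12(2g-1).
\]
Since the lower bound goes to infinity with $g$, I can sparsify the sequence of genera so that the upper bound for one staircase is strictly less than the lower bound for the next. Concretely, I propose to define $I = \{g_n\}_{n\in\N}$ inductively by $g_1 \defeq 13$ and $g_{n+1} \defeq 12(2g_n - 1) + 1$. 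Then for every $n$, $\max(g_n, 12) = g_n$, and
\[
\iota_{g_n} \;\leqslant\; 12(2g_n-1) \;<\; g_{n+1} \;\leqslant\; \iota_{g_{n+1}},
\]
so the coarse degrees $\{\iota_{g_n}\}_{n\in\N}$ are strictly increasing, in particular pairwise distinct.

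Since coarse equivalence of the bundles $P^{\rho_{g_n}}$ and $P^{\rho_{g_m}}$ would force $\iota_{g_n} = \iota_{g_m}$, no two of the bundles $P^{\rho_{g_n}}\colon \cO_{\F_2} Z_{g_n} \to \cO_{\SL_2(\Z)}\T^2$ for $n\in\N$ are coarsely equivalent, completing the proof. There is no real obstacle beyond assembling the pieces: the hard work is already contained in \cref{degreeMain,degreeOfTheIdentity,finalDegreeEstimate}, and the bookkeeping lemmata \cref{productOfDegrees,opposite-MV,secondOpposite} which ensure that the coarse degree is well-behaved under the equivalence relation on bundles.
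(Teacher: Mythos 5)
Your proof is correct and follows essentially the same route as the paper: the invariant is the coarse degree, the key input is \cref{infinitelyManyDegreesForStaircases} giving $\max(g,12)\leqslant\iota\leqslant 12(2g-1)$ for the staircase $Z_g$, and then one sparsifies the genera so that the intervals are disjoint. The paper uses $I=\{24^n: n\in\N\}$ while you use an inductively defined sequence $g_{n+1}=12(2g_n-1)+1$; both accomplish the same thing.
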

\begin{proof}
By the discussion above, the coarse degree of a sequence $P^\rho\colon\cO_{\F_2} \Sigma \to \cO_{\SL_2(\Z)}\T^2$ is preserved under coarse equivalence. The claim thus follows immediately from the fact that by \cref{infinitelyManyDegreesForStaircases} the set of the possible finite values of the coarse degree of sequences $P^\rho$ is infinite.
In fact, for $I=\{24^n : n\in \N\}$ and any distinct $g,g'\in I$, the coarse degrees of the corresponding sequences of maps $P^{\rho}\colon\cO_{\F_2} \Sigma_{g} \to \cO_{\SL_2(\Z)}\T^2$ and $P^{\rho'}\colon\cO_{\F_2} \Sigma_{g'} \to \cO_{\SL_2(\Z)}\T^2$ are distinct, so all such sequences are not coarsely equivalent.
\end{proof}

\Cref{notCEq,InfinitelyManyBundles}
imply \cref{pairwiseDistinctExpanderBundles}.

\begin{rem}
 The theorem immediately implies an analogous statement for $\cO$-graphs, namely: There are infinitely many pairwise  non-coarsely-equivalent sequences of maps from $\cO$-graphs to the original Margulis expander, where the $\cO$-graphs are associated with affine actions on origamis.
\end{rem}

\end{document}